\documentclass[a4paper,10pt]{amsart}
\usepackage{amsthm,amsmath,amssymb,amsfonts}
\usepackage[T1]{fontenc}
\usepackage[utf8]{inputenc}
\usepackage{verbatim}
\usepackage{marvosym}
\usepackage{stackrel}
\usepackage{graphicx}
\usepackage{eucal}
\usepackage{bbold}
\usepackage[svgnames]{xcolor}
\definecolor{acid}{HTML}{438806}
\definecolor{crimson}{RGB}{177,12,12}
\definecolor{teal}{HTML}{007B7F}
\definecolor{orange}{HTML}{F28C28}
\definecolor{coral}{HTML}{FF6F61}
\definecolor{blue(munsell)}{rgb}{0.0, 0.5, 0.69}
\usepackage{lipsum}
\usepackage{svg}
\usepackage{bbm}
\usepackage{caption}
\usepackage{enumitem}
\usepackage{mathtools}
\usepackage{thmtools}
\usepackage{epigraph}
\usepackage{color}
\usepackage{microtype}
\usepackage{relsize}
\usepackage{amsfonts}
\usepackage{adjustbox}
\usepackage{framed}
\usepackage[bbgreekl]{mathbbol}
\usepackage{mathrsfs}
\usepackage{hyperref}
\hypersetup{
    colorlinks = true,
    linkbordercolor = {teal},
   	linkcolor ={teal},
	anchorcolor = {pink},
	citecolor =  {coral},
	filecolor = {blue},
	menucolor = {blue},
	runcolor =  {blue},
	urlcolor = {orange},
}
\usepackage{trimclip}

\DeclareFontFamily{U}{min}{}
\DeclareFontShape{U}{min}{m}{n}{<-> udmj30}{}
\newcommand{\yo}{\!\text{\usefont{U}{min}{m}{n}\symbol{'210}}\!}

\makeatletter
\def\@tocline#1#2#3#4#5#6#7{\relax
  \ifnum #1>\c@tocdepth % then omit
  \else
    \par \addpenalty\@secpenalty\addvspace{#2}%
    \begingroup \hyphenpenalty\@M
    \@ifempty{#4}{%
      \@tempdima\csname r@tocindent\number#1\endcsname\relax
    }{%
      \@tempdima#4\relax
    }%
    \parindent\z@ \leftskip#3\relax \advance\leftskip\@tempdima\relax
    \rightskip\@pnumwidth plus4em \parfillskip-\@pnumwidth
    #5\leavevmode\hskip-\@tempdima
      \ifcase #1
       \or\or \hskip 1em \or \hskip 2em \else \hskip 3em \fi%
      #6\nobreak\relax
    \dotfill\hbox to\@pnumwidth{\@tocpagenum{#7}}\par
    \nobreak
    \endgroup
  \fi}
\makeatother

\usepackage{stmaryrd}
\usepackage{tikz}
\usepackage{tikz-cd}
\usepackage{quiver}

\usepackage[capitalise]{cleveref}

\theoremstyle{definition}
\newtheorem{thm}{Theorem}[subsection]
\newtheorem*{thm*}{Theorem}
\newtheorem{prop}[thm]{Proposition}
\newtheorem*{prop*}{Proposition}
\newtheorem{lem}[thm]{Lemma}
\newtheorem{cor}[thm]{Corollary}
\newtheorem{defn}[thm]{Definition}
\newtheorem*{defn*}{Definition}

\newtheorem*{war*}{Warning}
\newtheorem{rem}[thm]{Remark}
\newtheorem{constr}[thm]{Construction}
\newtheorem{exa}[thm]{Example}

\newtheorem*{notat*}{Notation}

\newcommand{\textdel}[1]{}

\newcommand{\Ecal}{\mathcal{E}}
\newcommand{\Fcal}{\mathcal{F}}

\newcommand{\Hcal}{\mathcal{H}}
\newcommand{\Xcal}{\mathcal{X}}
\newcommand{\Ocal}{\mathcal{O}}

\newcommand{\Set}{\mathsf{Set}}
\newcommand{\op}{{}^{\mathrm{op}}}
\DeclareMathOperator{\id}{id}

\newcommand{\nocontentsline}[3]{}
\newcommand\stoptoc{%
   \let\origcontentsline\addcontentsline
   \let\addcontentsline\nocontentsline
}
\newcommand\resumetoc{%
   \let\addcontentsline\origcontentsline
}

\newcommand{\mc}[1]{\mathcal{#1}}

\newcommand{\mbb}[1]{\mathbb{#1}}
\newcommand{\mr}[1]{\mathrm{#1}}

\newcommand{\ms}[1]{\mathsf{#1}}
\newcommand{\lan}{\ms{lan}}
\newcommand{\ran}{\ms{ran}}

\newcommand{\WRInj}{\ms{WRInj}}
\newcommand{\Topoi}{\ms{Topoi}}
\newcommand{\LEX}{\ms{LEX}}
\newcommand{\Lex}{\ms{Lex}}

\newcommand{\CAT}{\ms{CAT}}

\newcommand{\psh}{\ms{Psh}}
\newcommand{\sh}{\ms{Sh}}
\newcommand{\alg}{\ms{Alg}}
\newcommand{\surj}{\twoheadrightarrow}
\newcommand{\inj}{\rightarrowtail}
\newcommand{\hook}{\hookrightarrow}

\newcommand{\stt}[1]{\{\,#1\,\}}

\newcommand{\dv}{\uparrow}

\newcommand{\set}[1]{\{#1\}}
\newcommand{\Cl}{\ms{Cl}}

\makeatother
\makeatletter
\newcommand{\lt@}[2]{%
  \vtop{\m@th\ialign{##\cr
    \hfil$#1\operator@font lim$\hfil\cr
    \noalign{\nointerlineskip\kern1.5\ex@}#2\cr
    \noalign{\nointerlineskip\kern-\ex@}\cr}}%
}
\newcommand{\lt}{%
  \mathop{\mathpalette\lt@{\leftarrowfill@\textstyle}}\nmlimits@
}
\makeatother

\newcommand{\Syn}{\mathsf{Syn}}
\newcommand{\T}{\mathsf{T}}
\newcommand{\Sh}{\mathsf{Sh}}
\newcommand{\Obb}{\mathbb{O}}
\newcommand{\Psh}{\ms{Psh}}
\newcommand{\Alg}{\ms{Alg}}
\newcommand{\lex}{\ms{Lex}}
\newcommand{\Sub}{\operatorname{Sub}}

\DeclareMathOperator*{\colim}{colim}
\newcommand{\Hflat}{\Hcal_{\text{flat}}}
\newcommand{\Hmatte}{\Hcal_{\text{matte}}}
\newcommand{\Hdom}{\Hcal_{\text{dom}}}
\newcommand{\Hpure}{\Hcal_{\text{pure}}}

\newcommand*\cocolon{%
	\nobreak
	\mskip4mu plus1mu
	\mathpunct{}%
	\nonscript
	\mkern-\thinmuskip
	{:}%
	\mskip2mu
	\relax
}

\newcommand{\ult}{%
  \mathrel{\tikz [line width=.11ex, double distance=.33ex]
    \draw[>-]
    (0.3,0) -- (0,0);}
}

\def\slashedrightarrow{\relbar\mathrel{\mkern-4mu}\joinrel\mapstochar\mathrel{\mkern-4mu}\joinrel\rightarrow}
\newcommand{\pro}{\slashedrightarrow}

\makeatletter
  \def\title@font{\MakeUppercase}
  \let\ltx@maketitle\@maketitle
  \def\@maketitle{\bgroup%
    \let\ltx@title\@title%
    \def\@title{\resizebox{\textwidth}{!}{%
      \mbox{\title@font\ltx@title}%
    }}%
    \ltx@maketitle%
  \egroup}
\makeatother

\title[Conceptual completeness for subgeometric logics]{Conceptual completeness for subgeometric logics}
\author{Ivan Di Liberti}
\author{Umberto Tarantino}
\author{Lingyuan Ye}

\address{
Ivan \textsc{Di Liberti} \newline
Department of Philosophy, Linguistics and Theory of Science\newline
University of Gothenburg\newline
Gothenburg, Sweden\newline
\href{mailto:diliberti.math@gmail.com}{\sf diliberti.math@gmail.com}
}

\address{
Umberto \textsc{Tarantino} \newline
Université Paris Cité, CNRS, IRIF, F-75013\newline
Paris, France\newline
\href{mailto:tarantino@irif.fr}{\sf tarantino@irif.fr}
}

\address{
Lingyuan \textsc{Ye} \newline
Department of Computer Science\newline
University of Cambridge\newline
Cambridge, UK\newline
\href{mailto:ye.lingyuan.ac@gmail.com}{\sf ye.lingyuan.ac@gmail.com}
}

\thanks{The first-named author was supported by the Swedish Research Council (SRC, Vetenskapsrådet) under Grant No.~2019-04545. The research has received funding from Knut and Alice Wallenbergs Foundation through the Foundation's program for mathematics. The second-named author acknowledges financial support from the Agence Nationale de la Recherche
(ANR), project ANR-23-CE48-0012-01.}

\begin{document}

\maketitle

\vspace{-2em}
\begin{abstract}
\looseness=-1
We explore the notion of conceptual completeness for a fragment of geometric logic in the framework developed by the first and third author. Unlike its traditional interpretation as a reconstruction of syntax from semantics, in this paper we characterise conceptual completeness of a fixed fragment in terms of a duality between theories and topoi. We then show that conceptually complete fragments are conservatively embedded in full geometric logic, thus casting conceptual completeness in a new proof-theoretic light. We give a new proof of conceptual completeness for coherent logic, and we also show that regular, disjunctive, and essentially algebraic logic with falsum are conceptually complete. Finally, we show that our notion is equivalent to a traditional reconstruction result under the assumption of completeness with respect to set-based models: in the coherent case, we thus recover Makkai's original reconstruction theorem via ultracategories. 

  \smallskip \noindent \textbf{Keywords.} fragment of geometric logic, conceptual completeness, categorical
logic, topos, coherent topos, ultracategory, coherent logic, regular logic. \textbf{MSC2020.}   03B10, 03G30, 18B25, 18C10, 18A15, 18F10, 18N10.
\end{abstract}

   {
   \hypersetup{linkcolor=black}
    \tableofcontents
}

   \newpage
\section{Introduction}

\stoptoc 

\subsection*{Overview}
A traditional completeness theorem, for a logic, expresses that the class of models of any theory $\mathbb T$ in the logic is able to \emph{separate} formulas not provably equivalent modulo $\mathbb T$. In 1987, Makkai proved his celebrated \emph{(strong) conceptual completeness}\footnote
{
  In this paper, we will always refer to the result originally proved in \cite{makkaiStoneDualityFirst1987} and  dubbed \emph{strong conceptual completeness} in \cite{makkaiStrongConceptualCompleteness1988}, instead of the weaker result originally proved in \cite{makkaiFirstOrderCategorical1977} under the name of \emph{conceptual completeness}, see below. 
}
theorem for coherent first-order logic \cite{makkaiStoneDualityFirst1987}: strengthening the usual completeness, his result shows that we can \emph{reconstruct} the syntax of $\mathbb T$ from its semantics, conceived as the category $\mathsf{Mod}(\mathbb T)$ of models and their homomorphisms, provided that the latter is endowed with some additional structure. In this sense, conceptual completeness is traditionally intended as a \emph{duality} between syntax and set-based semantics. The essence of the theorem is encoded by a striking \emph{definability} property: a functor $\mathsf{Mod}(\mathbb T) \to \Set$ is defined by a formula in the theory\footnote
{
    Up to \emph{elimination of imaginaries} in the sense of Shelah \cite{shelahClassificationTheoryNumber1990}, see \cite{harnikModelTheoryVs2011}.
}, in the sense that it maps each model $M$ to the interpretation $\llbracket \phi \rrbracket_M$ of some fixed formula $\phi(x_1,\dots, x_n)$ in $M$, if and only if it preserves \emph{ultraproducts}.

At the propositional level, where categories of models are simply \emph{sets}, such a reconstruction result is usually better known as a \emph{representation theorem}: e.g., for Boolean algebras \cite{stoneTheoryRepresentationBoolean1936} in the classical case, or for distributive lattices in the coherent case \cite{stoneTopologicalRepresentationsDistributive1938,priestleyRepresentationDistributiveLattices1970}. In both cases, the key additional structure on the set of models is given by an appropriate \emph{topology}. Identifying a theory $\mathbb T$ with its Lindenbaum-Tarski algebra $\mathrm{A}_{\mathbb T}$, models correspond to morphisms $\mathrm{A}_{\mathbb T}\to \mathsf{2}$; each theorem can then be phrased as the statement that a dual adjunction, arising by the two-element set $\mathsf{2}$ as a dualising object, is a \emph{reflection}. 

\looseness=-1
At the level of predicate logic, a toy instance of such a reconstruction theorem is given, in the context of \emph{essentially algebraic logic}, by \emph{Gabriel-Ulmer duality} \cite{gabrielLokalPrasentierbareKategorien1971}. In this case, the syntax is simple enough that it can be recovered from the semantics without the need for additional structure. The role of $\mathsf{2}$ in detecting models is played here by the category $\Set$ of sets and functions: indeed, identifying a theory $\mathbb T$ with a finitely-complete category $\mc C_{\mathbb T}$, models are finite-limit-preserving functors $\mc C_{\mathbb T} \to \Set$. Then, $\mc C_{\mathbb T}$ can be recovered as the full subcategory of $\mathsf{Mod}(\mathbb T)$ spanned by those functors which preserve \emph{all} (small) limits and \emph{filtered} colimits. In formal analogy with the propositional case, the reconstruction can also be expressed by stating that an appropriate dual adjunction, induced by $\Set$ as a dualising object, is a reflection.

\looseness=-1
In full first-order logic, some additional structure on the semantics is really necessary to recover the syntax. For instance, it is well-known that two $\aleph_0$-categorical theories may fail to be bi-interpretable even if the automorphism groups of their countable model are isomorphic, see e.g.\ \cite{evansCounterexamplesConjectureRelative1990}.\footnote{In fact, for the two theories to be bi-interpretable, the two automorphism groups need to be isomorphic \emph{as topological groups} once endowed with an appropriate topology, see \cite[\S 1]{ahlbrandtQuasiFinitelyAxiomatizable1986}.} Makkai's crucial insight is that the category of models of a first-order theory $\mathbb T$ naturally carries additional structure, determined by ultraproducts and hence dubbed \emph{ultrastructure}. In analogy with the propositional case --- based on the fact that topologies can be described in terms of ultrafilter convergence, cf.\ \cite{manesTripleTheoreticConstruction1969, barrRelationalAlgebras1970} --- we can see ultrastructure as the extra `topological' structure on $\mathsf{Mod}(\mathbb T)$ needed to recover the syntax. As above, the reconstruction result for coherent first-order logic can also be formulated in terms of a reflective dual adjunction\footnote{
    To be more precise, the adjunction need only be a \emph{reflection in the small}, see \cite[\S 8]{makkaiStoneDualityFirst1987}. 
} induced by $\Set$ as a dualising object, between \emph{pretoposes} and categories equipped with ultrastructure, that is, \emph{ultracategories}. Makkai's theorem was later extended by Lurie \cite{lurieUltracategories2018}, who identifies coherent first-order theories with their \emph{classifying topoi}; other approaches to a syntax-semantics duality for first-order theories include \cite{awodeyFirstorderLogicalDuality2013} via topological groupoids, \cite{breinerSchemeRepresentationFirstorder2014} via schemes, and \cite{yeDualityTheoryCategorical2026} via genuine topologies in restricted cases. Also identifying syntax with topoi, the independent works \cite{saadiaExtendingConceptualCompleteness2025,hamadGeneralisedUltracategoriesConceptual2025,vangoolToposesEnoughPoints2026} generalised ultracategories in order to encompass categories of models of \emph{geometric} (predicate) logic and obtain a reconstruction theorem for geometric theories with enough set-based models.

\looseness=-1
Besides its importance at an abstract level, such a reconstruction result also has concrete model-theoretic applications. For instance, already in the propositional case, usually a version of \emph{Craig interpolation} can be deduced by looking at dual properties on spaces of models. Makkai's result, in particular, entails that a functorial choice of a subset in each model of a coherent first-order theory in one sort is definable if and only if it is closed under ultraproducts, and more definability results are derived in \cite{makkaiDualityDefinabilityFirst1994}. Moreover, it entails that, roughly speaking, an interpretation $I \colon \mathbb T \to \mathbb T'$ which induces an equivalence of categories $\mathsf{Mod}(\mathbb T') \to \mathsf{Mod}(\mathbb T)$ is itself a bi-interpretation: this (weaker) property, originally proved in \cite{makkaiFirstOrderCategorical1977} and studied by Pitts in \cite{pittsInterpolationConceptualCompleteness1986, pittsConceptualCompletenessFirstorder1989}, is also known as ``conceptual completeness''.  Building on \cite{makkaiStoneDualityFirst1987}, Zawadowski \cite{zawadowskiDescentDuality1995} proved a descent theorem for pretopoi which implies that, given a \emph{conservative} extension $\mathbb T'$ of a theory $\mathbb T$, the latter can be recovered from those models which can be expanded to models of $\mathbb T'$; his results also imply an interpolation result for coherent first-order logic.

\subsection*{In this paper} The crucial observation motivating the present work comes from \cite{dilibertiGeometryCoherentTopoi2022}: for a geometric\footnote{From here on, we work exclusively with predicate logic, cf.\ \cite[Rem.\ 5.0.5]{dilibertiLogicConcepts2category2025}.} theory $\mathbb T$, the property that its category of models is closed under ultraproducts can be rephrased as the existence of certain \emph{right Kan extensions} in the 2-category $\Topoi$ of topoi and geometric morphisms.  In other words, while an ultrastructure on a category is truly \emph{additional structure} imposed on it, it becomes a \emph{property} expressible in purely categorical terms once we restrict to categories of models of geometric theories, in the same way as a topology is additional structure on a set while being compact and Hausdorff is a property of a topological space. This insight leads to the idea of \emph{semantic prescriptions} developed in \cite{dilibertiLogicConcepts2category2025}, where the framework of \emph{right Kan injectivity} in $\Topoi$ is employed to achieve a fully modular treatment of subgeometric logics. 

In this setting, conceptual completeness for coherent logic boils down to a definability result akin to the one proved by Makkai: for a small pretopos $\mathcal A$ with associated coherent topos $\sh(\mc A, J_{\mr{coh}})$, a geometric morphism $\sh(\mc A, J_{\mr{coh}}) \to \Set[\Obb]$ is represented by an object of $\mc A$ if and only if it preserves an appropriate class of Kan extensions. This rephrasing reveals the aim and perspective of this paper: here, we advocate for a new interpretation of a conceptual completeness theorem as a \emph{syntactic} phenomenon, a duality result between syntax --- i.e., theories --- and syntax itself --- i.e., their classifying topoi. To this aim, instead of an intuitive ``reconstruction'' of syntax from semantics, we start from the definition of conceptual completeness given in \cite{dilibertiLogicConcepts2category2025} as a property that a \emph{fragment of geometric logic} may or may not hold, formalised in the language of Kan injectivity. Following the philosophy of \emph{ibid.}, this definition allows us for a {modular} study of conceptual completeness across subgeometric logics which appears to be completely original in the literature, where each logic is treated independently and with \emph{ad hoc} techniques. 

Here, the essence of conceptual completeness is detached from set-based semantics and a reconstruction theorem, in a two-fold sense. On one side, a conceptual completeness result for a fixed fragment is now liberated from the quest for the appropriate extra structure on categories of models so as to reconstruct syntax. On another side, such a result does not entail the completeness of the fragment, in the traditional sense, with respect to set-based models: the only engrained completeness is with respect to models in arbitrary topoi. Together with the leading role played by the \emph{object classifier} $\Set[\Obb]$, instead of $\Set$, this shows how our work is actually rooted in \emph{topos}-based semantics, identified with syntax by means of classifying topoi. In the paper, we will see how this notion of conceptual completeness recovers a familiar reconstruction theorem \emph{under the assumption} of completeness with respect to set-based models; moreover, to further our syntactic point, we will also realise that the theories in a conceptually complete fragment of geometric logic satisfy a suitable \emph{conservativity} property with respect to full geometric logic.

\subsection*{Contributions and structure of the paper}We begin by recalling the necessary background on the theory developed in \cite{dilibertiLogicConcepts2category2025} about fragments of geometric logic $\Hcal$ and the associated constructions defined in terms of right Kan injectivity:
\begin{itemize}[label={--}]
\item the 2-category $\WRInj(\Hcal)$ of topoi \emph{formally belonging} to the logic $\Hcal$ and their morphisms,
\item the \emph{syntactic category} $\Syn^{\Hcal}(\Xcal)$ of a topos $\Xcal$ formally in $\Hcal$,
\item the \emph{monad of syntax} $\T^{\Hcal}$, and
\item the \emph{classifying topos} $\Cl^{\Hcal}(\mc A)$ of a $\T^{\Hcal}$-algebra $\mc A$.
\end{itemize}
\looseness=-1
With these notions at hand, our investigations in this paper begin where \cite{dilibertiLogicConcepts2category2025} ended, on a first definition of what it means for a fragment of geometric logic to be conceptually complete.

\begin{defn*}[\protect{\cite[Def.\ 6.3.7]{dilibertiLogicConcepts2category2025}}]
    A bounded logic $\Hcal$ is \emph{conceptually complete} if the 2-functor $\Cl^{\Hcal} \colon \alg(\T^{\Hcal})\op \to \WRInj(\Hcal)$ is 2-fully-faithful.
\end{defn*}

\looseness=-1
In \Cref{sec:everyalgisasyncat}, we derive an equivalent description of conceptual completeness (\Cref{cor:cc-as-every-algebra-is-syntactic-category}). Concretely, this is achieved by realising that the classifying topos construction is right adjoint to the syntactic category construction (\Cref{prop:adjunction-syn-cl}), so that its fully-faithfulness can be rephrased in terms of the adjunction $\Syn^{\Hcal}\dashv \Cl^{\Hcal}$ being a \emph{reflection}, i.e.\ its counit being invertible. This yields a modular property in the language of Kan injectivity which, in the coherent case, captures the core of Makkai's conceptual completeness theorem as described above.

\begin{thm*}[\ref{cor:cc-as-every-algebra-is-syntactic-category}]
  A bounded logic $\Hcal$ is conceptually complete if and only if, for each small $\T^{\Hcal}$-algebra $\mc A$, the corestriction of the Yoneda embedding $\mc A \hook \Syn^{\Hcal}(\Cl^{\Hcal}(\mc A))$ is essentially surjective.
\end{thm*}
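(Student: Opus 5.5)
The plan is to deduce the statement from the adjunction $\Syn^{\Hcal} \dashv \Cl^{\Hcal}$ of \Cref{prop:adjunction-syn-cl}, viewed as a 2-adjunction between $\WRInj(\Hcal)$ and $\alg(\T^{\Hcal})\op$. A right 2-adjoint is 2-fully-faithful exactly when the counit of the adjunction is an equivalence. This is the 2-categorical form of the usual fact: for each pair of algebras, the functor on hom-categories induced by $\Cl^{\Hcal}$ factors, via the adjunction equivalence, as precomposition with the counit component. So the first step reduces conceptual completeness to the statement that, for every small $\T^{\Hcal}$-algebra $\mc A$, the counit component at $\mc A$ is an equivalence of $\T^{\Hcal}$-algebras.

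Because of the $\op$, this counit lives in $\alg(\T^{\Hcal})\op$. In $\alg(\T^{\Hcal})$ it is therefore a morphism
\[
\mc A \to \Syn^{\Hcal}(\Cl^{\Hcal}(\mc A)).
\]

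The second step is to identify this morphism concretely. Here I would unwind the construction of the adjunction in \Cref{prop:adjunction-syn-cl}. By definition, $\Cl^{\Hcal}(\mc A)$ is a sheaf topos on $\mc A$, namely a subtopos of $\Psh(\mc A)$ containing the representables. Its syntactic category $\Syn^{\Hcal}(\Cl^{\Hcal}(\mc A))$ is the full subcategory of $\Cl^{\Hcal}(\mc A)$ on the objects (equivalently, morphisms to $\Set[\Obb]$) satisfying the relevant Kan-injectivity preservation conditions. The counit should be precisely the corestricted Yoneda embedding $\yo \colon \mc A \hook \Syn^{\Hcal}(\Cl^{\Hcal}(\mc A))$. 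For this I need two facts: that representables are sheaves and land in the syntactic category, and that the unit/counit triangle identities are realised by this map. Both should be recorded in, or immediate from, the proof of \Cref{prop:adjunction-syn-cl} and \cite{dilibertiLogicConcepts2category2025}.

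The third step uses that the Yoneda embedding is fully faithful, since the topology is subcanonical for $\Cl^{\Hcal}$. It is also a $\T^{\Hcal}$-algebra morphism, being the counit. Hence it is an equivalence in $\alg(\T^{\Hcal})$ if and only if it is essentially surjective. This relies on the forgetful 2-functor from $\T^{\Hcal}$-algebras to categories being conservative on equivalences: a morphism of algebras whose underlying functor is an equivalence is an equivalence of algebras. That holds since $\T^{\Hcal}$-algebra structure is property-like, given by colimits/limits of a specified kind, and is preserved by any equivalence.

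I expect the main obstacle to be the second step, namely checking carefully that the abstract counit coincides, up to isomorphism, with the Yoneda corestriction rather than some other map. I would handle this by computing the counit as the transpose of the identity on $\Cl^{\Hcal}(\mc A)$ and evaluating it on each $a \in \mc A$ via the universal property of $\Cl^{\Hcal}(\mc A)$.
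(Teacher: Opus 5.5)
Your proposal is correct and follows the paper's own argument. The counit of $\Syn^{\Hcal}\dashv\Cl^{\Hcal}$ at $\mc A$ is the corestricted Yoneda embedding, read off from the proof of \Cref{prop:adjunction-syn-cl} as the transpose of the identity on $\Cl^{\Hcal}(\mc A)$; the right adjoint is 2-fully-faithful iff this counit is an equivalence, and because the embedding is fully faithful this reduces to essential surjectivity. The paper leaves implicit your extra remark that an algebra morphism whose underlying functor is an equivalence is an equivalence of $\T^{\Hcal}$-algebras, and that remark is correct.
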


The dual adjunction $\Syn^{\Hcal}\dashv \Cl^{\Hcal}$ can be conceptualised in two ways. On one hand, from a \emph{logical} perspective, it can be seen as a kind of \emph{Diaconescu's equivalence}, describing morphisms of $\Hcal$-injectives into $\Cl^{\Hcal}(\mc A)$, for a small $\T^{\Hcal}$-algebra $\mc A$, in terms of morphisms of $\T^{\Hcal}$-algebras out of $\mc A$ landing into a syntactic category. In this sense, it captures the essence of conceptual completeness as a duality between syntax and topos-based semantics. On the other hand, from a more \emph{geometric} perspective, $\Syn^{\Hcal}\dashv \Cl^{\Hcal}$ can be seen as a duality between \emph{sites} and \emph{categories of sheaves} for topoi formally in $\Hcal$, where the syntactic category of such a topos intuitively plays the role of a site of presentation for it.

In \Cref{sec:examples} we showcase the power and flexibility of our theory by proving conceptual completeness of four logics of interest: \emph{coherent} logic (\Cref{ssec:cc-coherent-logic}), \emph{regular} logic (\Cref{ssec:cc-regular-logic}), \emph{essentially algebraic} logic \emph{with falsum} (\Cref{ssec:cc-ess-alg-logic-w-falsum}), and \emph{finitary disjunctive} logic (\Cref{ssec:cc-finit-disj-logic}). Although the proof techniques in this section are more `specific' for each fragment and not fully modular\footnote{Hopefully, \emph{not yet} fully modular.}, we highlight how the amount of \emph{ad hoc} computations is substantially smaller than in previous case-by-case approaches in the literature, both on a conceptual and on a practical level. Moreover, the framework of Kan injectivity often \emph{simplifies} these arguments, compared to previous proofs, precisely as it avoids complicated structure artificially imposed on the semantics. 

\looseness=-1
In \Cref{sec:cons-emb}, we pursue our general analysis of conceptual completeness by studying those logics $\Hcal$ for which the adjunction $\Syn^{\Hcal}\dashv \Cl^{\Hcal}$ is \emph{idempotent}. We name these logics \emph{conservatively embedded} (in geometric logic), and we introduce some explicit syntax in order to justify this name in terms of conservativity of suitable interpretations (\Cref{prop:ce-logics-via-syntax}). Clearly, every conceptually complete logic is conservatively embedded: this entails that a conceptual completeness result has direct repercussions on the  \textit{proof theory} of the fragment.

\looseness=-1
Finally, in \Cref{sec:makkai-all-along}, we discuss how our notion of conceptual completeness relates to Makkai's notion in the coherent case. Towards this goal, we place ourselves in the framework of \emph{virtual ultracategories}, introduced in \cite{saadiaExtendingConceptualCompleteness2025,hamadGeneralisedUltracategoriesConceptual2025,vangoolToposesEnoughPoints2026} and developed also in \cite{aristoteProfunctorialAlgebras2026}, as the appropriate \emph{arena of formal model theory} against which to formalise the blueprint of semantic prescriptions. More concretely, we first show how these semantic prescriptions can be captured as Kan injectivity conditions in the 2-category of virtual ultracategories and virtual ultrafunctors, giving rise to an equivalence with the corresponding injectivity class in $\Topoi$ (\Cref{prop:formal-equivalence-injectivity-vult}). In the coherent case, this yields a reformulation of Makkai's result in terms of a composite adjunction, depicted below, being a reflection:
\[\begin{tikzcd}
	{\mathsf{Pretopoi}\op} & {\WRInj(\Hcal_{\beta})_{\ms{wep}}} & {\WRInj(\mc M_\beta)^{\ms{bnd}}_{\ms{sob}}}
	\arrow[""{name=0, anchor=center, inner sep=0}, "{{\Cl^{\Hcal_{\beta}}}}"', curve={height=18pt}, from=1-1, to=1-2]
	\arrow[""{name=1, anchor=center, inner sep=0}, "{{{{\Syn^{\Hcal_{\beta}}}}}}"', curve={height=18pt}, from=1-2, to=1-1]
	\arrow[""{name=2, anchor=center, inner sep=0}, "{{{{\mathsf{pt}}}}}"', curve={height=18pt}, from=1-2, to=1-3]
	\arrow[""{name=3, anchor=center, inner sep=0}, "{{{{\mc O}}}}"', curve={height=18pt}, from=1-3, to=1-2]
	\arrow["\dashv"{anchor=center, rotate=-90}, draw=none, from=1, to=0]
	\arrow["\simeq"{description}, draw=none, from=3, to=2]
\end{tikzcd}\]
\looseness=-1
This diagram captures our point of view on conceptual completeness, detaching it from a traditional completeness result. Indeed, the first adjunction being a reflection is equivalent to coherent logic being conceptually complete in our sense, while the second adjunction is an equivalence by virtue of completeness of coherent logic with respect to set-based models. In particular, we deduce that conceptual completeness of coherent logic in our sense is equivalent to Makkai's result (\Cref{cor:recovering-makkai}).

As a concluding remark, we abstract the previous line of reasoning beyond the coherent case. We introduce a notion of \emph{conceptual completeness à la Makkai} formalising a reconstruction theorem for a generic logic defined in terms of semantic prescriptions (\Cref{def:cc-à-la-makkai}), and we show that it is equivalent to conceptual completeness in our sense under the assumption of completeness property with respect to set-based models (\Cref{bigthoeremwithtrivialproof}). This completes our analysis of conceptual completeness as a syntactic phenomenon, a priori detached from set-based semantics (cf.\ \Cref{rem:resolution}).

\begin{thm*}[\ref{bigthoeremwithtrivialproof}]
	Let $\Hcal$ be a bounded logic such that $\Hcal = \mc O(\mc M)$ for some class $\mc M$ of morphisms in $\mathsf{vUlt^{bnd}}$, and $\Hcal$-classifying topoi have enough points.
Then, the following are equivalent:
\begin{enumerate}
	\item $\Hcal$ is conceptually complete.
	\item $\Hcal$ is conceptually complete \emph{à la} Makkai.
\end{enumerate}
\end{thm*}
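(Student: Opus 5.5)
The plan is to reduce both notions to statements about the same composite adjunction and then use the enough-points hypothesis to see that the second factor is an equivalence. Conceptual completeness à la Makkai (\Cref{def:cc-à-la-makkai}) should say that the dual adjunction between small $\T^{\Hcal}$-algebras and the relevant category of (sober, bounded) virtual ultracategories in $\WRInj(\mc M)$, induced by $\Set$ as dualising object, is a reflection. Equivalently, its unit $\mc A \to \mathsf{Hom}(\mathsf{Mod}(\mc A),\Set)$, restricted to the $\mc M$-preserving virtual ultrafunctors, is an equivalence. First I will show that this adjunction factors as the composite of $\Syn^{\Hcal}\dashv\Cl^{\Hcal}$ (\Cref{prop:adjunction-syn-cl}) with $\mc O \dashv \mathsf{pt}$, as in the displayed diagram of the introduction. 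This uses $\mathsf{pt}(\Cl^{\Hcal}(\mc A)) \simeq \mathsf{Mod}(\mc A)$ and the fact that points of $\Set[\Obb]$ are sets, so that morphisms $\Cl^{\Hcal}(\mc A)\to\Set[\Obb]$ in $\WRInj(\Hcal)$ correspond to morphisms $\mathsf{pt}(\Cl^{\Hcal}(\mc A))\to\Set$ in $\WRInj(\mc M)$.

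Second, I will invoke \Cref{prop:formal-equivalence-injectivity-vult}. Since $\Hcal=\mc O(\mc M)$, the points functor $\mathsf{pt}$ restricted to $\Hcal$-injectives is 2-fully faithful on topoi with enough points, landing in sober bounded virtual ultracategories, and $\mc O$ inverts it there. The hypothesis that $\Hcal$-classifying topoi have enough points then says that every $\Cl^{\Hcal}(\mc A)$ lies in the subcategory $\WRInj(\Hcal)_{\ms{wep}}$ on which $\mathsf{pt}\dashv\mc O$ restricts to an equivalence. Consequently the unit of the composite adjunction at $\mc A$ is, up to isomorphism, the unit of $\Syn^{\Hcal}\dashv\Cl^{\Hcal}$ at $\mc A$, namely the corestricted Yoneda embedding $\mc A\hook\Syn^{\Hcal}(\Cl^{\Hcal}(\mc A))$, composed with an equivalence.

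Third, I will conclude using \Cref{cor:cc-as-every-algebra-is-syntactic-category}. Conceptual completeness holds iff each such corestricted Yoneda embedding is essentially surjective, hence an equivalence, since it is already fully faithful. By the previous step this happens iff the Makkai-style unit is an equivalence for every small $\mc A$. Both directions follow at once, because the two units differ only by an equivalence.

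The main obstacle is the first step: checking that the hom-sets really match. Concretely, $\Hcal$-morphisms $\Cl^{\Hcal}(\mc A)\to\Set[\Obb]$ must correspond exactly to $\mc M$-preserving virtual ultrafunctors $\mathsf{Mod}(\mc A)\to\Set$, naturally in $\mc A$, and this correspondence must be compatible with the unit maps. I would get this by applying \Cref{prop:formal-equivalence-injectivity-vult} to the pair $\Cl^{\Hcal}(\mc A)$, $\Set[\Obb]$. Both have enough points, the latter trivially, so the identification is a 2-fully faithful action of $\mathsf{pt}$ on hom-categories. Naturality of the unit should then be a routine diagram chase.
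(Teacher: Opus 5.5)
Your proposal is correct and follows essentially the paper's own argument. The paper also reads the equivalence off the composite of $\Syn^{\Hcal}\dashv\Cl^{\Hcal}$ with the equivalence $\mathsf{pt}\simeq\mc O$ of \Cref{prop:formal-equivalence-injectivity-vult}, using $\mathsf{Mod}\cong\mathsf{pt}\circ\Cl^{\Hcal}$.

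The only difference is that you go through (co)units and \Cref{cor:cc-as-every-algebra-is-syntactic-category}, which is more than is needed. Since $\mathsf{pt}$ is 2-fully faithful on $\WRInj(\Hcal)_{\ms{wep}}$, and $\Cl^{\Hcal}$ lands there by the enough-points hypothesis, $\mathsf{pt}\circ\Cl^{\Hcal}$ is fully faithful if and only if $\Cl^{\Hcal}$ is. This makes your ``main obstacle'' of matching hom-categories into $\Set[\Obb]$ and $\Set$ unnecessary.
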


\resumetoc

\begin{notat*}
\looseness=-1
    Throughout the paper, we will use the following notations and conventions. When speaking of 2-dimensional category theory, we will loosely speak of \emph{2-category}, \emph{2-functor}, \emph{2-adjunction} even for \emph{bicategories}, \emph{pseudofunctors}, \emph{pseudoadjunctions}. When dealing with 2-categories whose objects are themselves categories, we will typically use lowercase letters for the 2-category defined by \emph{small} objects, and capital letters for the 2-category defined by \emph{locally-small} objects: for instance, $\Lex$ (resp.\ $\LEX$) will denote the 2-category of small (resp.\ locally-small) finitely-complete categories, finite-limit-preserving functors, and natural transformations. This does not apply to $\Topoi$, which will denote the 2-category of (Grothendieck) topoi, geometric morphisms, and geometric transformations, nor to sub-2-categories thereof.
\end{notat*}

\subsection{On logics and doctrines} \label{ssec:logics-and-doctrines}
\looseness=-1
In this subsection, we recall the necessary background from \cite{dilibertiLogicConcepts2category2025} on which the rest of the paper is based.

The crucial definition, originating the theory developed also in \cite{dilibertiCraigInterpolationSubgeometric2026} and in this paper, is to identify the intuitive notion of a \emph{fragment of geometric logic} by requiring the existence of a class of structures in the category of (generalised) points of a theory. It is the insight of~\cite{dilibertiGeometryCoherentTopoi2022} that the relevant structures can be prescribed via a \emph{class of geometric morphisms}, by looking at \emph{(weak) right Kan injectivity} (cf.~\cite{dilibertiKZPseudomonadsKanInjectivity2024}) with respect to them in the 2-category $\Topoi$ of topoi. 

\begin{defn}[Logic]\looseness=-1
    A \emph{logic} $\Hcal$ is a class of geometric morphisms between topoi. 
\end{defn}

A logic then determines a class of topoi and geometric morphisms between them via Kan injectivity. These are the topoi whose generalised points admit the corresponding structure prescribed by the logic, in a sense developed in~\cite{dilibertiLogicConcepts2category2025}.

\begin{defn}[Topoi formally in the logic]
We say that a topos $\Xcal$ \emph{formally belongs} to a logic $\Hcal$ if it is (weakly) right Kan injective with respect to $\Hcal$-morphisms, that is, if the right Kan extension below exists in the 2-category $\Topoi$, for all $f \colon \mc E \to \mc F$ in $\Hcal$ and all geometric morphisms $x \colon \mc E \to \mc X$:
\[\begin{tikzcd}
	{\mc E} & {\mc X} \\
	{\mc F}
	\arrow["x", from=1-1, to=1-2]
	\arrow["f"', from=1-1, to=2-1]
	\arrow["{\ran_fx}"', dashed, from=2-1, to=1-2]
\end{tikzcd}\]
A \emph{morphism of $\Hcal$-injectives} is then a geometric morphism $g \colon \mc X \to \mc Y$ between topoi formally in $\Hcal$ which preserves these Kan extensions. We denote by $\WRInj(\Hcal)$ the locally full 2-category spanned by topoi formally in $\Hcal$ and morphisms of $\Hcal$-injectives.
\end{defn}

\begin{exa}[In this paper]\label{exa:examples-of-logics}
We now introduce the four logics which we will prove to be conceptually complete in \Cref{sec:examples}.
\begin{itemize}
    \item[i.]\label{def:coherent-logic}\emph{Coherent logic} is the class $\Hflat$ of \emph{flat} geometric morphisms, i.e.\ those geometric morphisms whose direct image preserves finite coproducts and epimorphisms.
    \item[ii.]\label{def:regular-logic}\emph{Regular logic} is the class $\Hmatte$ of \emph{matte} geometric morphisms, i.e.\ those geometric morphisms whose direct image preserves epimorphisms. 
    \item[iii.]\label{def:ess-alg-logic-with-falsum} \emph{Essentially algebraic logic with falsum} is the class $\Hdom$ of \emph{dominant} geometric morphisms, i.e.\ those geometric morphisms whose direct image preserves the initial object.
    \item[iv.]\label{def:finit-disj-logic}
\emph{Finitary disjunctive logic} is the class $\Hpure$ of \emph{pure} geometric morphisms, i.e.\ those geometric morphisms whose direct image preserves finite coproducts.
\end{itemize}
\end{exa}

\begin{rem}[Free topoi and essentially algebraic logic]
\looseness=-1
As observed already in \cite{johnstoneInjectiveToposes1981}, we recall here that \emph{free topoi}, i.e.\ presheaf topoi $\Psh(\mc C)$ over categories $\mc C$ in $\lex$, are (weakly) right Kan injective with respect to any geometric morphism, and hence they formally belong to any logic. In logical terms, this means that the `smallest' fragment of geometric logic that we can consider, corresponding to the class $\Hcal_{\mr{all}}$ of \emph{all} geometric morphisms, is \emph{essentially algebraic logic} (cf.\ \cite[Cor.\ 1.3.6]{dilibertiLogicConcepts2category2025}).
\end{rem}

\looseness=-1
Next, every topos $\Xcal$ which formally belongs to a logic $\Hcal$ admits a \emph{syntactic category}, playing the role of a tentative axiomatisation of the theory corresponding to the topos.

\begin{defn}[Syntactic categories]
    For a logic $\Hcal$ and a topos $\Xcal$ formally in $\Hcal$, the \emph{$\Hcal$-syntactic category} of $\Xcal$ is the category
    \[ \Syn^{\Hcal}(\mc X) \coloneqq \WRInj(\Hcal)(\Xcal, \Set[\Obb]),\]
    which we can identify with a full subcategory of $\Xcal$ as $\Set[\Obb] \coloneqq \Psh(\ms{FinSet}\op)$ is the \emph{object classifier} (cf.\ \cite[\S D3.2]{johnstoneSketchesElephantTopos2002}).
    
    This construction defines a 2-functor $\Syn^{\Hcal} \colon \WRInj(\Hcal)\op \to \LEX$.
\end{defn}

In the following, we will essentially only consider \emph{bounded} logics, in the sense below. This size restriction is motivated by practical and expository purposes rather than a real need: we believe that most of our results could be lifted to \emph{Morita-bounded} theories in the sense of \cite[Def.\ 5.1.6]{dilibertiLogicConcepts2category2025}, and at the moment we have no examples of logics which are not Morita-bounded.

\begin{defn}[Bounded logics]
A logic $\Hcal$ is \emph{bounded} if the syntactic category $\Syn^{\Hcal}(\Xcal)$ is small for each topos $\Xcal$ formally in $\Hcal$. 
\end{defn}

To each logic $\Hcal$ we can canonically associate a \emph{doctrine} $\T^{\Hcal}$, meaning here a lax-idempotent relative pseudomonad over the inclusion $\Lex \hook \LEX$ which is moreover a sub-monad of the presheaf construction $\Psh\colon \Lex \to \LEX$. Intuitively, $\T^{\Hcal}$ performs a completion under property-like natural operations respecting the \emph{semantic prescriptions} specified by the logic $\Hcal$. 
 
\begin{defn}[Monad of syntax]
    For a bounded logic $\Hcal$, its \emph{monad of $\Hcal$-syntax} $\T^{\Hcal}$ is the doctrine defined by setting, for each category $\mc C$ in $\Lex$:
    \[ \T^{\Hcal}(\mc C) \coloneqq \Syn^{\Hcal}(\Psh(\mc C)) = \WRInj(\Hcal)(\Psh(\mc C), \Set[\Obb])\]
\end{defn}

Every $\T^{\Hcal}$-algebra can be `completed' to a topos, classifying the theory in the logic $\Hcal$ represented by the algebra. More formally, we can construct \emph{$\Hcal$-classifying topoi} of $\T^{\Hcal}$-algebras, and characterise them by a Diaconescu-like equivalence. To this end, recall that for each doctrine $\T$:
\begin{itemize}
    \item[--]for a small $\T$-algebra $\mc A$, the algebra functor $a \colon \T \mc A \to \mc A$ which realises it is left adjoint to the unit $\eta_{\mc A} \colon \mc A \hook \T \mc A$, itself given by the corestriction of the Yoneda embedding;
    \item[--]topoi are always (large) $\T$-algebras.
\end{itemize}

\begin{defn}[Classifying topoi]\label{def:classifying-topoi}
    Let $\Hcal$ be a bounded logic and let $\mc A$ be a small $\T^{\Hcal}$-algebra, realised by a functor $a \colon \T^{\Hcal}(\mc A) \to \mc A$ in $\Lex$. The \emph{$\Hcal$-classifying topos} $\Cl^{\Hcal}(\mc A)$ of $\mc A$ is constructed as the topos of sheaves on the site $(\mc A, J^{\Hcal}_{\mc A})$, where $J^{\Hcal}_{\mc A}$ is the least topology localizing the components of the canonical natural transformation 
\[\begin{tikzcd}
	{\T^{\Hcal}(\mc A)} & {\Psh(\mc A)}
	\arrow[""{name=0, anchor=center, inner sep=0}, curve={height=-15pt}, hook, from=1-1, to=1-2]
	\arrow[""{name=1, anchor=center, inner sep=0}, "{\yo a }"', curve={height=15pt}, from=1-1, to=1-2]
	\arrow["{\delta}", between={0.2}{0.8}, Rightarrow, from=0, to=1]
\end{tikzcd}\]    
induced by the unit of the adjunction $a \dashv \eta_{\mc A}$. We note here that the topology $J^{\Hcal}_{\mc A}$ is \emph{subcanonical}, i.e.\ representable presheaves are sheaves.

This construction defines a 2-functor $\Cl^{\Hcal} \colon \Alg(\T^{\Hcal})\op \to \WRInj(\Hcal)$.
\end{defn}

\begin{thm}[Diaconescu's equivalence]\label{thm:diaconescu}
    Let $\Hcal$ be a bounded logic and let $\mc A$ be a small $\T^{\Hcal}$-algebra. For any topos $\mc E$ there is a pseudonatural equivalence of categories
    \[\Topoi(\mc E, \Cl^{\Hcal}(\mc A)) \simeq \Alg(\T^{\Hcal})(\mc A, \mc E).\]
\end{thm}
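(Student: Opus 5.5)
The plan is to reduce the statement to the classical Diaconescu theorem for the site $(\mc A, J^{\Hcal}_{\mc A})$, and then to identify flat $J^{\Hcal}_{\mc A}$-continuous functors with $\T^{\Hcal}$-algebra morphisms.

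By \Cref{def:classifying-topoi} we have $\Cl^{\Hcal}(\mc A) = \Sh(\mc A, J^{\Hcal}_{\mc A})$, and $\mc A$ is finitely complete. Classical Diaconescu therefore gives a pseudonatural equivalence between $\Topoi(\mc E, \Cl^{\Hcal}(\mc A))$ and the full subcategory of $\LEX(\mc A, \mc E)$ on those finite-limit-preserving functors $F$ that send $J^{\Hcal}_{\mc A}$-covering sieves to jointly epimorphic families. Since $J^{\Hcal}_{\mc A}$ is the least topology localizing the components $\delta_P \colon P \to \yo a(P)$, for $P \in \T^{\Hcal}(\mc A)\subseteq \Psh(\mc A)$, a lex $F$ is $J$-continuous exactly when its left Kan extension $F_! \colon \Psh(\mc A) \to \mc E$ inverts every $\delta_P$. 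Here I use that inverse images out of $\Psh(\mc A)$ invert a set of maps iff they factor through the generated localization, and that $\Sh(\mc A,J)$ is the localization generated by these maps.

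It then remains to show that a lex $F\colon \mc A\to\mc E$ is a (pseudo) morphism of $\T^{\Hcal}$-algebras iff $F_!$ inverts all $\delta_P$. The key input is that $\T^{\Hcal}$ is lax-idempotent. A morphism of algebras is therefore just a lex functor for which the canonical comparison $e\circ \T^{\Hcal}(F) \Rightarrow F\circ a$ is invertible, where $e \colon \T^{\Hcal}(\mc E)\to\mc E$ is the algebra structure of the topos $\mc E$. Because $\T^{\Hcal}$ is a sub-monad of $\Psh$, the composite $e\circ\T^{\Hcal}(F)$ should coincide with $F_!$ restricted to $\T^{\Hcal}(\mc A)$: the structure map of $\mc E$ is the colimit/left Kan extension along Yoneda, and $\T^{\Hcal}(F)$ is the restriction of $\Psh(F)=\lan_{\yo}$. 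Under this identification the comparison 2-cell is exactly $F_!(\delta_P)\colon F_!(P)\to F_!(\yo aP)\cong F(aP)$, up to composition with an isomorphism. So the comparison is invertible iff $F_!$ inverts each $\delta_P$. On 2-cells, both sides are full subcategories of $\LEX(\mc A,\mc E)$ with all natural transformations, because algebra 2-cells for lax-idempotent doctrines impose no condition. Pseudonaturality in $\mc E$ follows from functoriality of left Kan extension under inverse images.

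I expect the main obstacle to be the identification of $e\circ \T^{\Hcal}(F)$ with $F_!|_{\T^{\Hcal}(\mc A)}$ and of the comparison cell with $F_!\delta$. This requires unwinding the relative pseudomonad structure and checking that the algebra structure of a topos is the colimit functor. I would first verify it on the presheaf monad itself, and then restrict along the sub-monad inclusion, using that $\delta$ is the mate of the unit of $a\dashv\eta_{\mc A}$.
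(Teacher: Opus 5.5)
The paper does not prove this statement: it is recalled as background from \cite{dilibertiLogicConcepts2category2025}, so there is no in-paper proof to compare against. Your argument is correct, and it is the natural one given that $J^{\Hcal}_{\mc A}$ is defined as the least topology localizing the $\delta_P$. Classical Diaconescu reduces the question to lex $F\colon\mc A\to\mc E$ whose geometric morphism $\mc E\to\Psh(\mc A)$ factors through $\Sh(\mc A,J^{\Hcal}_{\mc A})$. This happens iff $F_!$ inverts every $\delta_P$, since the maps inverted by $F_!$ are exactly the local isomorphisms for the topology of the image of $\mc E\to\Psh(\mc A)$. Lax-idempotency then turns invertibility of the comparison cell into the pseudo-morphism condition. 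This is also how the paper uses the result, e.g.\ when comparing inverse images $\Psh(\mc A)\to\mc E$ on representables in \Cref{prop:adjunction-syn-cl}, and when invoking classical Diaconescu in \Cref{prop:generic-cc}.

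One repair is needed. $\T^{\Hcal}$ is a pseudomonad relative to $\Lex\hookrightarrow\LEX$, so there is no $\T^{\Hcal}(\mc E)$ for a large topos $\mc E$, and the map $e\circ\T^{\Hcal}(F)$ is not available. The algebra structure of $\mc E$ consists instead of extensions $\lan_{\eta_{\mc C}}G\colon\T^{\Hcal}(\mc C)\to\mc E$ of lex $G\colon\mc C\to\mc E$ with $\mc C$ small. A morphism of algebras $F\colon\mc A\to\mc E$ is one for which the canonical cells $\lan_{\eta_{\mc C}}(Fg)\Rightarrow F\circ a\circ\T^{\Hcal}(g)$ are invertible for every lex $g\colon\mc C\to\mc A$.

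With this formulation, your ``main obstacle'' disappears. Since $\T^{\Hcal}(\mc A)\hookrightarrow\Psh(\mc A)$ is fully faithful and contains the representables, the comma categories $\eta_{\mc A}\downarrow P$ and $\yo\downarrow P$ coincide. The pointwise formula then gives $\lan_{\eta_{\mc A}}F\cong F_!|_{\T^{\Hcal}(\mc A)}$. The comparison cell at $P$ is $F_!(\delta_P)$ followed by $F_!(\yo_{aP})\cong F(aP)$.

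The case of a general $g$ reduces to $g=\id_{\mc A}$. Indeed, $\lan_{\eta_{\mc C}}(Fg)\cong\lan_{\eta_{\mc A}}F\circ\T^{\Hcal}(g)$, because both are restrictions of $F_!\circ g_!$. Hence the general comparison is the one for $\id_{\mc A}$ whiskered by $\T^{\Hcal}(g)$. The rest of your argument, including the treatment of 2-cells and pseudonaturality in $\mc E$, goes through as written.
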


\begin{rem}[Topoi as classifying topoi, algebras as syntactic categories]
    Intuitively, \Cref{def:classifying-topoi} formalises the idea of classifying topoi for theories in the fragment of geometric logic represented by $\Hcal$ are formally in $\Hcal$. However, the question of whether every topos formally in $\Hcal$ is the $\Hcal$-classifying topos of some $\T^{\Hcal}$-algebra is still open.\footnote{To be more precise, whether every topos formally in $\Hcal$ is a \emph{coadjoint retract} of an $\Hcal$-classifying topos, see \cite[Lem.\ 1.1.5]{dilibertiLogicConcepts2category2025}.} Instead, we will see how the `symmetric' question, namely whether every $\T^{\Hcal}$-algebra is the syntactic category of a topos formally in $\Hcal$, precisely means that $\Hcal$ is \emph{conceptually complete} (cf \Cref{sec:everyalgisasyncat}). 
\end{rem}

\begin{rem}[$\Hcal$-presentations]\label{rem:classifying-topoi-have-presentations}
    We say that a topos $\Xcal$ formally in $\Hcal$ admits an \emph{$\Hcal$-presentation} if there is a geometric embedding $\mc X \hook \Psh(\mc C)$ in $\WRInj(\Hcal)$ into some presheaf topos. By \cite[Lem.\ 6.1.2]{dilibertiLogicConcepts2category2025}, the embedding $j \colon \Cl^{\Hcal}(\mc A) \hook \Psh(\mc A)$ is an $\Hcal$-presentation of $\Cl^{\Hcal}(\mc A)$ for each small $\T^\Hcal$-algebra $\mc A$. 
\end{rem}

\looseness=-1
We conclude this subsection with the most important definition for the sake of this paper, which will be the starting point for the next sections.

\begin{defn}[Conceptually complete logics]\label{def:cc-logic}
    A bounded logic $\Hcal$ is \emph{conceptually complete} if the 2-functor $\Cl^{\Hcal} \colon \Alg(\T^{\Hcal})\op \to \WRInj(\Hcal)$ is 2-fully-faithful.
\end{defn}

We shall wrap up this section by making a couple of final comments on the topic of conceptual completeness, borrowing the main table from \cite[\S 7.2]{dilibertiLogicConcepts2category2025}, where the authors speculate on the description of the ingredients of conceptual completeness for many concrete examples of logics. 

\begin{table}[!h]\renewcommand{\arraystretch}{1.3}
\begin{tabular}{|c|c|c|}
\hline
\textbf{Logic} --  \textbf{$\mathcal{H}$}  & $\mathsf{WRInj}_{\mr{pw}}(\mathcal{H})$  & $\mathsf{Alg}(\ms T^\mathcal{H}$) \\ \hline

geometric \hfill  $\mathcal{H}_{\text{eth}}$ & $\mathsf{Topoi}$  \hfill & \textsf{Pretopoi}$_\infty$ \hfill \\ \hline

integral \hfill  $\mathcal{H}_{\text{cl}}$ & $\mathsf{TocTopoi}$ \hfill  & \textsf{ConLex} \hfill \\ \hline

ess. alg. \hfill  $\mathcal{H}_{\text{all}}$ & $\mathsf{Free}^{*}$ \hfill  & \textsf{Lex} \hfill \\ \hline

ess. alg. w. $\bot$ \hfill  $\mathcal{H}_{\text{dom}}$ & $\mathsf{ClFree}^{*}$  \hfill & \textsf{siLex} \hfill \\ \hline

disjunctive \hfill  $\mathcal{H}_{\text{inn}}$ & $\mathsf{DisjTopoi}^{*}$ \hfill  & \textsf{Lexten}$_\infty$ \hfill \\ \hline

fin. disj. \hfill  $\mathcal{H}_{\text{pure}}$ & $\mathsf{fDisjTopoi}^{*}$ \hfill &  \textsf{Lexten} \hfill \\ \hline

regular \hfill   $\mathcal{H}_{\text{matte}}$ & $\mathsf{RegTopoi}^{*}$ \hfill  & \textsf{EffReg} \hfill \\ \hline

coherent \hfill $\mathcal{H}_{\text{flat}}$ & $\mathsf{CohTopoi}^{*}$ \hfill & \textsf{Pretopoi} \hfill \\ \hline

\end{tabular}
\label{tab:summary}
\end{table}

\looseness=-1
Besides the case of full geometric logic, which follows from \cite[Prop.\ 1.2.3]{dilibertiLogicConcepts2category2025}, and the case of coherent logic, which was deduced in \cite[\S 6.3]{dilibertiLogicConcepts2category2025} from Makkai's results in \cite{makkaiStoneDualityFirst1987}, the description of $\ms T^{\mc H}$-algebras for the remaining rows were conjectural at the time of that paper. In~\cref{sec:examples}, we will successfully provide a description for the algebras in the cases of essentially algebraic logic with falsum, finitary disjunctive logic, regular logic, and coherent logic --- with a new characterisation that does not rely on \cite{makkaiStoneDualityFirst1987}. Compared to the above table, the only case which we leave conjectural is thus that of disjunctive logic. We leave to further research, instead, a characterisation of the column $\WRInj_{\mr{pw}}(\Hcal)$\footnote{The subscript ``pw'' refers to \emph{pointwiseness} of the defining Kan extensions, which will not play any role in this paper (cf.\ \cite[\S 2]{dilibertiLogicConcepts2category2025}).}.

\begin{rem}[The role of effective quotients]
    It is well-known in the case of coherent and regular logic that coherent categories and regular categories alone cannot lead to a conceptual completeness result, due to the fact that a coherent or regular category has the same category of models of its exact completion (cf.~\cite[\S D1]{johnstoneSketchesElephantTopos2002}). Thus, obtaining pretopoi for coherent logic and (Barr-)exact categories for regular logic should be expected.
\end{rem}

\section{Conceptual completeness as `every algebra is a syntactic category'} \label{sec:everyalgisasyncat}

In this section, we exhibit an adjunction between the syntactic category and the classifying topos constructions associated to a logic $\Hcal$ (\Cref{prop:adjunction-syn-cl}). As a consequence, we will characterise \emph{conceptual completeness} for $\Hcal$ as the property that every $\T^{\Hcal}$-algebra can be recovered as the syntactic category of its classifying topos (\Cref{cor:cc-as-every-algebra-is-syntactic-category}). Throughout, fix a bounded logic $\Hcal$.

\subsection{Syntactic categories are $\T^\Hcal$-algebras}
We begin by showing how to equip syntactic categories with algebra structures: intuitively, the following proposition expresses how the syntactic category of a theory in a given fragment of geometric logic is endowed with the categorical structure prescribed by that fragment.

\begin{prop}[Syntactic categories are $\T^\Hcal$-algebras] \label{prop:syntactic-categories-are-algebras}
 For every topos $\Xcal$ formally in $\Hcal$, the syntactic category $\Syn^{\Hcal}(\Xcal)$ is a $\T^\Hcal$-algebra.
\end{prop}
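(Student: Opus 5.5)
We need a functor $a\colon \T^{\Hcal}(\Syn^{\Hcal}(\Xcal)) \to \Syn^{\Hcal}(\Xcal)$ left adjoint to the unit $\eta$. Since $\T^{\Hcal}$ is lax-idempotent, being an algebra is a property: it suffices to exhibit a left adjoint to $\eta_{\mc S}$ with invertible counit (equivalently, $\mc S$ admits the colimits prescribed by $\T^{\Hcal}$). Write $\mc S \coloneqq \Syn^{\Hcal}(\Xcal)$, a small full subcategory of $\Xcal$ closed under finite limits. Finite limits of morphisms into $\Set[\Obb]$ preserving the relevant Kan extensions again preserve them, since finite limits in $\Xcal$ are computed via the classifier; so $\mc S\in\Lex$.

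The strategy is to realise the algebra map as a restriction of the canonical algebra structure of $\Xcal$. Topoi are (large) $\T^{\Hcal}$-algebras. Concretely, the inclusion $i\colon \mc S \hook \Xcal$ is in $\Lex$, so its left Kan extension along Yoneda, $i_! = (-)\otimes_{\mc S} i \colon \Psh(\mc S) \to \Xcal$, is cocontinuous and preserves finite limits on $\T^{\Hcal}(\mc S)$ (the inverse image of a geometric morphism $g\colon \Xcal \to \Psh(\mc S)$, using flatness of $i$). Define
\[ a \coloneqq i_!|_{\T^{\Hcal}(\mc S)}. \]

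The \emph{main step} is showing that $a$ lands in $\mc S$. Let $P\in \T^{\Hcal}(\mc S)$, i.e.\ a morphism of $\Hcal$-injectives $P\colon \Psh(\mc S)\to\Set[\Obb]$. Then $a(P)$ corresponds to the composite $P\circ g\colon \Xcal \to \Psh(\mc S) \to \Set[\Obb]$, where $g$ is the geometric morphism with inverse image $i_!$: indeed the inverse image of the composite sends the generic object to $g^*(P)=i_!(P)$. Morphisms of $\Hcal$-injectives compose, so it suffices to show that $g$ is a morphism of $\Hcal$-injectives, i.e.\ that $g$ preserves right Kan extensions along $\Hcal$-morphisms. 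This is where I expect the difficulty. Given $f\colon\mc E\to\mc F$ in $\Hcal$ and $x\colon\mc E\to\Xcal$, I must show $g\circ\ran_f x \cong \ran_f(gx)$. Since $\Psh(\mc S)$ is free, maps into it from $\mc F$ are flat functors $\mc S\to\mc F$, namely $s\mapsto (\ran_f x)^*(s)$. These can be computed objectwise by viewing each $s\in\mc S$ as a map $\Xcal\to\Set[\Obb]$ preserving Kan extensions, so that $s\circ\ran_f x\cong\ran_f(s\circ x)$. The remaining task is to check that right Kan extensions into a presheaf topos are detected componentwise through the evaluations $\Psh(\mc S)\to\Set[\Obb]$, i.e.\ that $\ran_f$ into $\Psh(\mc S)$ is computed pointwise by precomposing with representables. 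I expect this to follow from the description of $\Psh(\mc S)$ as a (lax) limit of copies of $\Set[\Obb]$ indexed by $\mc S$, or from the identification $\Topoi(\mc F,\Psh(\mc S))\simeq\Lex(\mc S,\mc F)$ together with the fact that the universal 2-cell can be checked on each $s$.

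Finally, the adjunction $a\dashv\eta_{\mc S}$ comes from the Kan-extension adjunction $i_!\dashv \mathrm{Hom}_{\Xcal}(i-,-)$ restricted to full subcategories, since $\mathrm{Hom}(i-,s)=\yo s$ for $s\in\mc S$. Moreover $a\eta\cong\id$, because $i_!\yo\cong i$ and $\yo$ is fully faithful. By lax-idempotency this yields the $\T^{\Hcal}$-algebra structure, uniquely up to isomorphism.
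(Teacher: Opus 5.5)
Your proposal is essentially the paper's proof. Your $g$ is the paper's $a_{\Xcal}\colon\Xcal\to\Psh(\Syn^{\Hcal}(\Xcal))$ with $a_{\Xcal}^*\circ\yo=i$, and the algebra map is precomposition with it. The step you flag is closed as you anticipate, by checking $g\circ\ran_f x\cong\ran_f(gx)$ after composing with each representable $\yo_s\colon\Psh(\Syn^{\Hcal}(\Xcal))\to\Set[\Obb]$. These representables preserve all right Kan extensions because they are right adjoints in $\Topoi$ (equivalently, because $(\ran_f h)^*\circ\yo\cong f_*\circ h^*\circ\yo$ for $h$ landing in a presheaf topos), while $\yo_s\circ g\cong s$ preserves them by hypothesis. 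Your derivation of $a\dashv\eta$ by restricting $i_!\dashv\Xcal(i-,-)$ is a tidier packaging of the unit $\colim_k\yo_{X_k}\Rightarrow\yo_{\colim_k X_k}$ and counit $\yo_s\circ a_{\Xcal}\cong s$, which the paper writes out explicitly.
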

\begin{proof}
Recall by \cite[Constr.\ 5.1.5]{dilibertiLogicConcepts2category2025} that the syntactic category $\Syn^{\Hcal}(\Xcal)$ can be canonically equipped with a topology $D^{\Hcal}_{\Xcal}$ defined as the pullback topology $i^*(J^{\operatorname{can}}_{\Xcal})$, where $i \colon \Syn^{\Hcal}(\Xcal) \hookrightarrow \Xcal$ is the inclusion and $J^{\operatorname{can}}_{\Xcal}$ is the canonical topology on $\Xcal$. Adapting the proof of \cite[Lem.\ C2.3.13]{johnstoneSketchesElephantTopos2002} to possibly-large base categories, we see that the topos of sheaves $\Sh(\Syn^{\Hcal}(\Xcal), D^{\Hcal}_{\Xcal})$ arises as the image factorisation of the geometric morphism $a_\Xcal \colon \Xcal \to \Psh(\Syn^{\Hcal}(\Xcal))$ induced by $i$:
\[\begin{tikzcd}[sep = small]
	\Xcal & {\Psh(\Xcal)} & {\Psh(\Syn^{\Hcal}(\Xcal))} \\
	& {\Sh(\Syn^{\mathcal H}(\mathcal X), D^{\mathcal H}_{\mathcal X})}
	\arrow[hook, from=1-1, to=1-2]
	\arrow["{a_{\Xcal}}"{description}, out=30, in=160, dashed, from=1-1, to=1-3]
	\arrow["{q_{\Xcal}}"', two heads, from=1-1, to=2-2]
	\arrow[from=1-2, to=1-3]
	\arrow[hook, from=2-2, to=1-3]
\end{tikzcd}\]
Note that $a_\Xcal$ is the geometric morphism corresponding to the inclusion $i$ via Diaconescu's equivalence (cf.\ \cite[Rem.\ 6.2.2]{dilibertiLogicConcepts2category2025}), i.e.\ $a_\Xcal^* \yo = i \colon \Syn^{\Hcal}(\Xcal) \hookrightarrow \Xcal$.
We now show that the morphism $a_\Xcal$ lies in $\WRInj(\Hcal) ( \Xcal, \Psh(\Syn^{\Hcal}(\Xcal)))$, so that by precomposition we obtain a structure functor $\T^\Hcal(\Syn^\Hcal(\Xcal)) \to \Syn^\Hcal(\Xcal)$. 
Consider the following situation, for any $f \colon \Ecal \to \Fcal$ in $\Hcal$ and any $x \colon \Ecal \to \Xcal$:
\[\begin{tikzcd}[column sep= 35pt]
	\Ecal & \Xcal \\
	\Fcal & {\Psh(\Syn^{\Hcal}(\Xcal))}
	\arrow["x", from=1-1, to=1-2]
	\arrow["f"', from=1-1, to=2-1]
	\arrow["{a_{\Xcal}}", from=1-2, to=2-2]
	\arrow["\ran_f x"{description}, dashed, from=2-1, to=1-2]
	\arrow["\ran_f(a_{\Xcal} x)"', dashed, from=2-1, to=2-2]
\end{tikzcd}\]
We need to show that $a_\Xcal \ran_f x \cong \ran_f(a_{\Xcal} x)$. Clearly this holds if and only if the two agree on representables, i.e.\ if and only if $\yo_y a_\Xcal \ran_f x \cong \yo_y \ran_f(a_{\Xcal} x)$ for any $y$ in $\Syn^{\Hcal}(\Xcal) = \WRInj(\Hcal)(\Xcal, \Set[\Obb])$.
\[\begin{tikzcd}[column sep = 35pt]
	\Ecal & \Xcal &[-10pt] {\Set[\Obb]} \\
	\Fcal & {\Psh(\Syn^{\Hcal}(\Xcal))}
	\arrow["x", from=1-1, to=1-2]
	\arrow["f"', from=1-1, to=2-1]
	\arrow["y", from=1-2, to=1-3]
	\arrow["{a_{\Xcal}}", from=1-2, to=2-2]
	\arrow["\ran_f x"{description}, dashed, from=2-1, to=1-2]
	\arrow["\ran_f(a_{\Xcal} x)"', dashed, from=2-1, to=2-2]
	\arrow["\yo_y"',bend right=30, from=2-2, to=1-3]
\end{tikzcd}\]
Note then that $\yo_y a_\Xcal  \cong y$ since $a_\Xcal$ is the geometric morphism corresponding to the inclusion $i$ via Diaconescu's equivalence. Hence, since $y$ lies in $\WRInj(\Hcal)$ and since $\yo_y$ preserves all right Kan extensions, we have:
\[ \yo_y a_\Xcal \ran_f x \cong y \ran_f x \cong \ran_f(y x ) \cong \ran_f(\yo_y a_{\Xcal} x ) \cong \yo_y \ran_f(a_{\Xcal} x).\]

Let now $\alpha_{\Xcal} \colon \T^\Hcal(\Syn^\Hcal(\Xcal)) \to \Syn^\Hcal(\Xcal)$ be given by precomposition with $a_\Xcal$. To see that it is a $\T^{\Hcal}$-algebra functor we need to show that $\alpha_{\Xcal} \dashv \eta$, where $\eta$ is the unit of the monad $\T^\Hcal$ at ${\Syn^{\Hcal}(\Xcal)}$ acting as the corestriction of the Yoneda embedding.
\begin{itemize}
    \item The unit of $\alpha_{\Xcal} \dashv \eta$ at an object $X \in \T^{\Hcal}(\Syn^\Hcal(\Xcal))$ is the natural transformation $\delta_X \colon X \Rightarrow \yo_{X \circ a_{\mathcal X}}$ given as follows. Writing $X$ as a small colimit of representable presheaves on $\Syn^{\Hcal}(\Xcal)$, we have 
    \[ a_{\mc X}^* (X) \cong a_{\mc X}^* (\colim_k \yo_{X_k}) \cong \colim_k (a_{\mc X}^* \yo_{X_k}) \cong \colim_k X_k,\]
    which, identifying $a_{\mc X}^*(X)$ with the morphism of $\Hcal$-injectives $X \circ a_{\mc X} \colon \Xcal \to \Set[\Obb]$, means that $\colim_k X_k$ exists in $\Syn^{\Hcal}(\Xcal)$. Thus, the family of coprojections $X_k \to \colim_{X_k}$ in $\Syn^{\Hcal}(\Xcal)$, identified with transformations between representables, induces a natural transformation 
    \[ X \cong \colim_k \yo_{X_k} \Rightarrow \yo_{\colim_k X_k} \cong \yo_{X\circ a_\Xcal}.\]

    \item The counit of $\alpha_{\Xcal} \dashv \eta$ at an object $y \in \Syn^{\Hcal}(\Xcal)$ is simply given by the isomorphism $\yo_{y} \circ a_{\Xcal} \cong y$.
\end{itemize}
We omit the routine verification of the triangle equalities.
\end{proof}

\begin{cor}[$\Syn^{\Hcal}$ lands in $\alg(\mathsf{T}^\Hcal)$]
    The 2-functor $\Syn^{\Hcal} \colon {\WRInj(\Hcal)}\op \to \lex$ can be upgraded to a 2-functor ${\WRInj(\Hcal)}\op \to {\alg(\mathsf{T}^\Hcal)}$. 
    \begin{proof}
        To prove that $\Syn^{\Hcal}$ also sends morphisms of $\Hcal$-injectives to $\T^{\Hcal}$-algebra morphisms, let $h \colon \Xcal \to \mc Y$ be a morphism in $\WRInj(\Hcal)$ and consider, for a functor $f \colon \mc A \to \Syn^{\Hcal}(\mc Y)$ in $\lex$, the diagram
\[\begin{tikzcd}[column sep = 40pt]
	{\mc A} & {\Syn^{\Hcal}(\mc Y)} \\
	{\T^{\Hcal}(\mc A)} & {\Syn^{\Hcal}(\mc X),}
	\arrow["f", from=1-1, to=1-2]
	\arrow["\eta"', from=1-1, to=2-1]
	\arrow["{\Syn^{\Hcal}(h)}", from=1-2, to=2-2]
	\arrow["{\alpha_{\mc Y}^{f}}"{description}, dashed, from=2-1, to=1-2]
	\arrow["{\alpha_{\mc X}^{\Syn^{\Hcal}(h)\circ f}}"', dashed, from=2-1, to=2-2]
\end{tikzcd}\]
        where $\alpha_{\mc Y}^f \coloneqq \alpha_{\mc Y} \circ \T^{\Hcal}(f)$ and $\alpha_{\mc X}^{\Syn^{\Hcal}(h)\circ f} \coloneqq \alpha_{\mc X} \circ \T^{\Hcal}(\Syn^{\Hcal}(h) \circ f)$ (see \cite[\S 5.3]{dilibertiLogicConcepts2category2025}). The functor $\Syn^{\Hcal}(h)$ is a $\T^{\Hcal}$-algebra morphism if the lower triangle in the above diagram commutes, for which it clearly suffices to show that $\Syn^{\mc H}(h) \circ \alpha_{\mc Y} \cong \alpha_{\mc X} \circ \T^{\Hcal}(\Syn^{\Hcal}(h))$, i.e.\ that the diagram
\[\begin{tikzcd}
	{\T^{\Hcal}\Syn^{\Hcal}(\mc Y)} & {\Syn^{\Hcal}(\mc Y)} \\
	{\T^{\Hcal}\Syn^{\Hcal}(\mc X)} & {\Syn^{\Hcal}(\mc X)}
	\arrow["{\alpha_{\mc Y}}", from=1-1, to=1-2]
	\arrow["{\T^{\Hcal}\Syn^{\Hcal}(h)}"', from=1-1, to=2-1]
	\arrow["{\Syn^{\Hcal}(h)}", from=1-2, to=2-2]
	\arrow["{\alpha_{\mc X}}"', from=2-1, to=2-2]
\end{tikzcd}\]
commutes. To see this, recall that $\alpha_{\mc Y}$ is given by precomposition with the geometric morphism $a_{\mc Y} \colon \mc Y \to \Psh(\Syn^{\Hcal}(\mc Y))$ corresponding, via Diaconescu's equivalence, to the inclusion $i_{\mc Y} \colon \Syn^{\Hcal}(\mc Y) \hook \mc Y$, and similarly for $\alpha_{\mc X}$. Hence, making the action of $\T^{\Hcal}$ on morphisms explicit, the above diagram commutes if and only if:
\[ h^* a^*_{\mc Y} (Y) \cong a^*_{\mc X} (\lan_{\Syn^{\Hcal}(h)\op }Y)\]
for any $Y \in \T^{\Hcal}(\Syn^{\Hcal}(\mc Y))$, identified with a presheaf on $\Syn^{\Hcal}(\mc Y)$. This latter isomorphism follows by writing $Y$ as a small colimit of representables and recalling that the functor $\Syn^{\Hcal}(h) \colon \Syn^{\Hcal}(\mc Y) \to \Syn^{\Hcal}(\mc X)$ is the restriction of $h^* \colon \mc Y \to \mc X$, i.e.\ $h^* \circ i_{\mc Y} \cong i_{\mc X} \circ \Syn^{\Hcal}(h)$.
    \end{proof}
\end{cor}

\begin{rem}[Two sites for the same topos?]\looseness=-1
At this point, it is worth reflecting on the following situation. For a topos $\Xcal$ formally in $\Hcal$, \Cref{prop:syntactic-categories-are-algebras} allows us to consider the classifying topos $\Cl^{\Hcal}(\Syn^{\Hcal}(\mc X))$, defined as recalled in \Cref{def:classifying-topoi} as the topos of sheaves on the site $(\Syn^{\Hcal} (\Xcal), J^{\Hcal}_{\Xcal})$. However, as recalled above, we can also consider the pullback of the canonical topology on $\Xcal$ along the embedding $\Syn^{\Hcal}(\mc X) \hookrightarrow \mc X$, which gives rise to the \emph{syntactic site} $(\Syn^{\Hcal}(\mc X), D^{\Hcal}_{\Xcal})$ for $\Xcal$. 

As we will see in \Cref{sec:cons-emb}, the matter of characterizing classifying topoi of syntactic categories as categories of sheaves on syntactic sites is tightly linked to conceptual completeness: more concretely, we will see how conceptual completeness of $\Hcal$ will imply that $\Cl^{\Hcal}(\Syn^{\Hcal}(\mc X)) \simeq \Sh ( \Syn^{\Hcal}(\mc X) , D^{\Hcal}_{\mc X}) $ for each topos $\Xcal = \Cl^{\Hcal}(\mc A)$ classifying a small $\T^{\Hcal}$-algebra $\mc A$.
\end{rem}

\subsection{A Diaconescu-like adjunction: $\Syn^{\Hcal}\dashv \Cl^{\Hcal}$}

We can now prove the main result of this section, namely an adjunction between the syntactic category and the classifying topos constructions (\Cref{prop:adjunction-syn-cl}). The proof of this fact will make use of the following general principle about injectivity classes.

\begin{lem}[Injectivity via fully-faithfulness]\label{lem:injectivity-via-embeddings}
Let $\mc M$ be a class of maps in a 2-category $\mc K$ and let $\mc{S}$ be any sub-2-category of $\mc K$ defined by (weak) injectivity with respect to $\mc M$. Let $f \colon X \to Y$ be any morphism in $\mc K$ and let $i \colon Y \hook Z$ be a fully faithful morphism lying in $\mc S$. Then, $f \in \mc S$ if and only if $i\circ f \in \mc S$.
\begin{proof}
    Clearly $i\circ f \in \mathcal S$ if $f \in \mathcal S$; suppose conversely that $i\circ f \in \mathcal S$ and fix $\mathcal S \coloneqq \WRInj(\mc M)$ for concreteness. Consider the following situation, for any $x \colon A \to X$ with $X \in \mathcal S$ and any $g \colon A \to B$ in $\mc M$:
\[\begin{tikzcd}[column sep = 40pt]
	A & X  \\
	B & {Y}
	\arrow["x", from=1-1, to=1-2]
	\arrow["g"', from=1-1, to=2-1]
	\arrow["{f}", from=1-2, to=2-2]
	\arrow["\ran_g x"{description}, dashed, from=2-1, to=1-2]
	\arrow["\ran_g (f x)"', dashed, from=2-1, to=2-2]
\end{tikzcd}\]
Since $i\circ f$ and $i$ both lie in $\mathcal S$, we have:
\[ i f \ran_g x \cong \ran_g (i f x) \cong i \ran_g (f x )\]
from which $f \ran_g x \cong \ran_g (fx)$ since $i$ is fully faithful, meaning that $f \in \mathcal S$.

\end{proof}
\end{lem}

\begin{prop}[$\Syn^{\Hcal}\dashv \Cl^{\Hcal}$] \label{prop:adjunction-syn-cl}
There is a 2-adjunction:
\[\begin{tikzcd}
	{\WRInj(\Hcal)} && {{\alg(\T^{\Hcal})}\op}
	\arrow[""{name=0, anchor=center, inner sep=0}, "{\Syn^{\Hcal}}", curve={height=-12pt}, from=1-1, to=1-3]
	\arrow[""{name=1, anchor=center, inner sep=0}, "\Cl^{\Hcal}", curve={height=-12pt}, from=1-3, to=1-1]
	\arrow["\dashv"{anchor=center, rotate=-90}, draw=none, from=0, to=1]
\end{tikzcd}\]
\end{prop}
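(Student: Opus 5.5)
We establish the adjunction by a pseudonatural equivalence of hom-categories
\[ \WRInj(\Hcal)(\Xcal, \Cl^{\Hcal}(\mc A)) \simeq \Alg(\T^{\Hcal})(\mc A, \Syn^{\Hcal}(\Xcal)), \]
for $\Xcal$ formally in $\Hcal$ and $\mc A$ a small $\T^{\Hcal}$-algebra. The route is to start from \Cref{thm:diaconescu}, which gives $\Topoi(\Xcal, \Cl^{\Hcal}(\mc A)) \simeq \Alg(\T^{\Hcal})(\mc A, \Xcal)$, and identify the full subcategory of the left-hand side given by morphisms of $\Hcal$-injectives with the full subcategory of the right-hand side given by algebra morphisms $\mc A \to \Xcal$ factoring through $\Syn^{\Hcal}(\Xcal) \hook \Xcal$. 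Since $\Syn^{\Hcal}(\Xcal)$ is a full subcategory of $\Xcal$, the latter is equivalent to $\Alg(\T^{\Hcal})(\mc A, \Syn^{\Hcal}(\Xcal))$. We first check that, by \Cref{prop:syntactic-categories-are-algebras}, the inclusion $i \colon \Syn^{\Hcal}(\Xcal) \hook \Xcal$ is an algebra morphism, and that a lex functor $F\colon \mc A \to \Syn^{\Hcal}(\Xcal)$ is an algebra morphism if and only if $iF$ is. The latter should follow because $i$ is fully faithful and preserves the structure: the algebra structure on $\Syn^{\Hcal}(\Xcal)$ was computed through the colimits in $\Xcal$ (via $a_\Xcal^*$).

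The key step is the following. Let $g \colon \Xcal \to \Cl^{\Hcal}(\mc A)$ correspond to $F = g^* \circ \yo \colon \mc A \to \Xcal$. We must show that $g$ preserves $\Hcal$-Kan extensions if and only if, for every $A \in \mc A$, the composite $\yo_A \circ j\circ g \colon \Xcal \to \Set[\Obb]$ lies in $\WRInj(\Hcal)$, i.e.\ $F(A) \in \Syn^{\Hcal}(\Xcal)$. Here $j \colon \Cl^{\Hcal}(\mc A)\hook \Psh(\mc A)$ is the embedding, and $\yo_A$ is the classifying map of the representable.
\begin{itemize}
\item[--] By \Cref{rem:classifying-topoi-have-presentations}, $j$ is a fully faithful morphism in $\WRInj(\Hcal)$. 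Hence \Cref{lem:injectivity-via-embeddings} reduces the claim that $g$ is a morphism of $\Hcal$-injectives to the claim that $j g \colon \Xcal \to \Psh(\mc A)$ is one.
\item[--] As in the proof of \Cref{prop:syntactic-categories-are-algebras}, morphisms into a presheaf topos can be tested on representables. A geometric morphism into $\Psh(\mc A)$ is determined by its composites with the maps $\yo_A \colon \Psh(\mc A) \to \Set[\Obb]$. Each $\yo_A$ preserves all right Kan extensions, being the inverse image of a representable (the same fact used in that proof). Therefore $jg$ preserves $\ran_f x$ if and only if each $\yo_A jg$ does.
\end{itemize}
For the converse direction, each $\yo_A$ is itself in $\WRInj(\Hcal)$, so the composite is as well.

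It remains to check that the resulting equivalences are pseudonatural in $\Xcal$ and $\mc A$. In $\Xcal$ this amounts to compatibility with precomposition by $h$ and with $\Syn^{\Hcal}(h)$ being the restriction of $h^*$, as in the preceding corollary. In $\mc A$ it is inherited from \Cref{thm:diaconescu}. It also remains to check the matching on 2-cells, which is automatic since all the subcategories involved are full.

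The main obstacle I expect is the detection step: showing that preservation of the Kan extension $\ran_f x$ into $\Psh(\mc A)$ is reflected jointly by the family $\yo_A$. This requires that a canonical comparison 2-cell $jg\ran_f x \Rightarrow \ran_f(jgx)$ is invertible as soon as it is invertible after each $\yo_A$. I would argue this by noting that a geometric transformation into $\Psh(\mc A)$ is a natural transformation of flat functors $\mc A \to \Fcal$, whose component at $A$ is exactly its image under $\yo_A$. The comparison is thus invertible iff it is invertible componentwise. The identification of $\yo_A(jg)$ with $F(A)$ uses $(jg)^*\yo_A \cong g^*\yo_A$, using subcanonicity of $J^{\Hcal}_{\mc A}$.
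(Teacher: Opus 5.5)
Your proposal is correct and follows the paper's skeleton. Like the paper, you restrict Diaconescu's equivalence (\Cref{thm:diaconescu}) to full subcategories and use \Cref{lem:injectivity-via-embeddings} with the $\Hcal$-presentation $j\colon\Cl^{\Hcal}(\mc A)\hook\Psh(\mc A)$. For the direction ``$g$ is a morphism of $\Hcal$-injectives $\Rightarrow$ $F(A)\in\Syn^{\Hcal}(\Xcal)$'' you compose with $\yo_A\circ j$, exactly as the paper does.

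The one real difference is the other direction. The paper factors $jg\cong\mc P\bar F\circ a_{\Xcal}$ through $\Psh(\Syn^{\Hcal}(\Xcal))$, since both inverse images agree with $F$ on representables. It then concludes by composition: $a_{\Xcal}$ is a morphism of $\Hcal$-injectives by \Cref{prop:syntactic-categories-are-algebras}, and $\mc P\bar F$ is a right adjoint in $\Topoi$. You instead test $jg$ directly against the jointly conservative family $\yo_A\colon\Psh(\mc A)\to\Set[\Obb]$, using $\yo_A\circ jg\simeq F(A)$ (by subcanonicity). This is precisely the representable-detection argument the paper runs inside the proof that $a_{\Xcal}$ is a morphism of $\Hcal$-injectives, so you are inlining it rather than citing it.

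Your route is more self-contained, and you actually justify the detection step, which the paper dismisses with ``clearly''. The paper's route reuses earlier work and exhibits the factorisation through $a_{\Xcal}$. That makes visible how the algebra structure on $\Syn^{\Hcal}(\Xcal)$ enters the adjunction.

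Two small remarks. First, each $\yo_A$ preserves all right Kan extensions because it is a right adjoint in $\Topoi$: it is the essential geometric morphism induced by the lex functor $\ms{FinSet}\op\to\mc A$ picking out $A$. The reason is not that it is ``the inverse image of a representable''. Second, you check that algebra morphisms $\mc A\to\Syn^{\Hcal}(\Xcal)$ are the same as algebra morphisms $\mc A\to\Xcal$ landing in $\Syn^{\Hcal}(\Xcal)$. Your argument is that $i$ is a fully faithful algebra morphism, so comparison 2-cells are invertible iff they are after whiskering with $i$. That is correct, and it makes explicit a step the paper leaves implicit.
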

\begin{proof}
By Diaconescu's equivalence (\Cref{thm:diaconescu}) we know that there is an equivalence $\Alg(\T^{\Hcal})(\mc A, \Ecal) \simeq \mathsf{Topoi}(\Ecal, \Cl^{\Hcal}(\mc A))$ for each small $\T^{\Hcal}$-algebra $\mc A$ and each topos $\Ecal$. Therefore, to prove that $\Syn^{\Hcal} \dashv \Cl^{\Hcal}$, we now show that a $\T^{\Hcal}$-algebra morphism $F\colon \mc A \to \Ecal$ lands in the syntactic category $\Syn^{\Hcal}(\mc E) \subseteq \mc E$ if and only if its corresponding geometric morphism $f \colon \mc E \to \Cl^{\Hcal}(\mc A)$ lies in $\WRInj(\Hcal)$.

\begin{itemize}
    \item Assume first that $F \colon \mc A \to \Ecal$ factors through $\Syn^{\Hcal}(\mc E)$, and denote by $\bar F \colon \mc A \to \Syn^{\Hcal}(\mc E)$ its corestriction. Consider the geometric morphism $\mc P \bar F \colon \Psh(\Syn^{\Hcal}(\mc E)) \to \Psh(\mc A)$ induced by $\bar F$, whose inverse image acts by left Kan extension along $\bar F$, and note that the diagram below commutes.
\[\begin{tikzcd}[ampersand replacement=\&]
	\Ecal \& {\Cl^{\Hcal}(\mc A)} \& {\psh(\mc A)} \\
	\& {\psh(\Syn^{\Hcal}(\Ecal))}
	\arrow["f", from=1-1, to=1-2]
	\arrow["a_{\mc E}"', from=1-1, to=2-2]
	\arrow[hook, from=1-2, to=1-3]
	\arrow["\mc P \bar F"', from=2-2, to=1-3]
\end{tikzcd}\]
This follows since, recalling that the topology defining $\Cl^{\Hcal}(\mc A)$ is subcanonical, both inverse image functors $\Psh(\mc A) \to \mc E$ act as $F$ on representables. 

Therefore, the composite $\Ecal \xrightarrow{f} \Cl^{\Hcal}(\mc A) \hook \psh(\mc A)$ is a morphism of $\Hcal$-injectives by composition, since $a_{\mc E}$ is one by \Cref{prop:syntactic-categories-are-algebras} and $\mc P \bar F$ preserves all right Kan extensions as it is a right adjoint in $\Topoi$ (cf.\ \cite[Term.\ 5.2.4]{dilibertiLogicConcepts2category2025}); in particular, $f\colon \Ecal \to \Cl^{\Hcal}(\mc A)$ is thus a morphism of $\Hcal$-injectives by \Cref{lem:injectivity-via-embeddings} since the embedding $\Cl^{\Hcal}(\mc A) \hook \Psh(\mc A)$ is an $\Hcal$-presentation (cf.\ \Cref{rem:classifying-topoi-have-presentations}).

\item Conversely, suppose that $f\colon \Ecal \to \Cl^{\Hcal}(\mc A)$ is a morphism of $\Hcal$-injectives. Note that, for each object $a \in \mc A$, the representable $\yo_a \colon \Cl^{\Hcal}(\mc A) \to \Set[\Obb]$ is a morphism of $\Hcal$-injectives. This follows since --- again, the topology defining $\Cl^{\Hcal}(\mc A)$ being subcanonical --- it factors as the composite of the $\Hcal$-presentation $\Cl^{\Hcal}(\mc A) \hook \Psh(\mc A)$ with a representable presheaf, which is a right adjoint in $\Topoi$. Hence, the composite $\yo_a \circ f \colon \mc E \to \Set[\Obb]$ is also a morphism of $\Hcal$-injectives, meaning that $F(a)$ lies in $\Syn^{\Hcal}(\mc E) $.
\end{itemize}
\end{proof}

\looseness=-1
From the previous proof we can extract the counit of the adjunction $\Syn^{\Hcal}\dashv \Cl^{\Hcal}$, which is given at a small $\T^{\Hcal}$-algebra $\mc A$ by the corestriction $i_\mathcal{A} \colon \mc A \hook \Syn^{\Hcal} (\Cl^{\Hcal} (\mc A))$ of the Yoneda embedding. Via the 2-adjunction $\Syn^{\Hcal}\dashv \Cl^{\Hcal}$, we can thus equivalently characterise conceptual completeness of $\Hcal$ in the sense of \Cref{def:cc-logic} --- that is, the right adjoint being 2-fully-faithful --- as the property that every $\T^{\Hcal}$-algebra is a syntactic category.

\begin{cor}[Conceptual completeness as ``every algebra is a syntactic category''] \label{cor:cc-as-every-algebra-is-syntactic-category}
$\Hcal$ is conceptually complete if and only if, for every small $\T^{\Hcal}$-algebra $\mathcal A$, the embedding $i_\mathcal{A} \colon \mc A \hookrightarrow \Syn^{\Hcal} (\Cl^{\Hcal} (\mc A))$ is an equivalence of categories.
\end{cor}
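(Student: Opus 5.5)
This is the standard fact that a right adjoint is fully faithful precisely when the counit of the adjunction is invertible, applied to the 2-adjunction $\Syn^{\Hcal}\dashv\Cl^{\Hcal}$ of \Cref{prop:adjunction-syn-cl}. Since $\Syn^{\Hcal}$ lands in $\alg(\T^{\Hcal})\op$, the counit of $\Syn^{\Hcal}\dashv\Cl^{\Hcal}$, viewed in $\alg(\T^{\Hcal})\op$, is a morphism $\Syn^{\Hcal}\Cl^{\Hcal}(\mc A)\to \mc A$. Viewed in $\alg(\T^{\Hcal})$ it is therefore an algebra morphism $\mc A\to\Syn^{\Hcal}\Cl^{\Hcal}(\mc A)$. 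As extracted after the proof of \Cref{prop:adjunction-syn-cl}, it is the corestriction $i_{\mc A}$ of the Yoneda embedding.

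The key steps are as follows. First, I make the identification of the counit precise by tracing the identity of $\Cl^{\Hcal}(\mc A)$ through the Diaconescu equivalence of \Cref{thm:diaconescu}. The identity corresponds to the algebra morphism $\mc A\to\Cl^{\Hcal}(\mc A)$, $a\mapsto \yo_a$, which is well defined because $J^{\Hcal}_{\mc A}$ is subcanonical. The second half of the proof of \Cref{prop:adjunction-syn-cl} shows that each $\yo_a$ lies in $\Syn^{\Hcal}(\Cl^{\Hcal}(\mc A))$, so the map factors as $i_{\mc A}$.

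Second, I invoke the bicategorical version of the classical lemma: for a pseudoadjunction $L\dashv R$ with counit $\varepsilon$, the functor $R$ is 2-fully-faithful, meaning each $R_{A,B}$ is an equivalence of hom-categories, iff each $\varepsilon_A$ is an equivalence in the codomain 2-category. The proof runs through the composite
\[ \mc K(A,B)\xrightarrow{R}\mc L(RA,RB)\simeq \mc K(LRA,B),\]
which is isomorphic to precomposition with $\varepsilon_A$. Precomposition with $\varepsilon_A$ is an equivalence for all $B$ iff $\varepsilon_A$ is an equivalence, by the bicategorical Yoneda lemma. Here $\mc K=\alg(\T^{\Hcal})\op$, so "equivalence in $\mc K$" means $i_{\mc A}$ is an equivalence in $\alg(\T^{\Hcal})$.

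Third, I reduce "equivalence in $\alg(\T^{\Hcal})$" to "equivalence of underlying categories". The monad $\T^{\Hcal}$ is lax-idempotent, so being an algebra is a property and every functor between algebras preserving the structure up to the canonical comparison is a morphism. If $i_{\mc A}$ is an equivalence of categories, its pseudo-inverse automatically commutes with the algebra structures, since structure is defined by an adjunction $a\dashv\eta$ and adjoints are preserved by equivalences. Hence the pseudo-inverse is an algebra morphism and $i_{\mc A}$ is an equivalence of algebras. The converse direction is trivial.

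I expect the main obstacle to be this last step: checking carefully that an underlying equivalence of $\T^{\Hcal}$-algebras lifts to an equivalence in $\alg(\T^{\Hcal})$. Its proof rests on the standard fact for lax-idempotent (KZ) pseudomonads that the forgetful 2-functor from algebras is locally fully faithful and replete on equivalences. I would cite this from the KZ literature, e.g.\ \cite{dilibertiKZPseudomonadsKanInjectivity2024}. Finally, I note that $i_{\mc A}$ is already fully faithful as a corestriction of Yoneda, so being an equivalence amounts to essential surjectivity, which matches the introduction's formulation.
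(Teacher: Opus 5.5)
Your proposal is correct and follows the paper's own argument. The paper also identifies the counit of $\Syn^{\Hcal}\dashv\Cl^{\Hcal}$ at $\mc A$ with the corestricted Yoneda embedding $i_{\mc A}$, and uses that a right adjoint is 2-fully-faithful exactly when its counit is invertible. Your extra check, that an underlying equivalence of $\T^{\Hcal}$-algebras is an equivalence in $\alg(\T^{\Hcal})$ by lax-idempotency, fills in a detail the paper leaves implicit.
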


\section{Four easy pieces} \label{sec:examples}
\looseness=-1
In this section, we show that the logics described in \Cref{ssec:logics-and-doctrines}, namely \emph{coherent logic} $\Hflat$, \emph{regular logic} $\Hmatte$, \emph{essentially algebraic logic with falsum} $\Hdom$, and \emph{finitary disjunctive logic} $\Hpure$, are conceptually complete. Before diving into each individual case, let us describe the general approach we will follow in the proofs of this section.

The main difficulty in proving conceptual completeness results in our framework is that, for a general logic $\Hcal$, we do not have an explicit description of a number of intertwined entities: the doctrine $\ms T^{\mc H}$, the morphisms of $\Hcal$-injectives, and the $\T^{\Hcal}$-algebras. For concreteness, the formula for $ \mathsf{T}^{\Hflat}$ is given by \[\mathsf{T}^{\Hflat}(\mc C) = \Syn^{\Hflat}(\Psh\mc C) =  \WRInj(\Hflat)(\psh\mc C, \Set[\mathbb{O}]),\]
but since we do not have a full description of the morphisms of $\Hflat$-injectives, it is quite hard to guess what presheaves will end up populating the syntactic category of $\psh (\mc C)$ --- in this case, the \emph{coherent} ones. Clearly, this also means that we do not know a priori what kind of colimits our algebras will admit, as these coincide with those prescribed by the monad of $\Hflat$-syntax --- in this case, finite disjoint coproducts and effective quotients. All these questions capture the essence of proving conceptual completeness: precisely in order to answer them, in \cite{dilibertiLogicConcepts2category2025}, the authors invoke Makkai's version of conceptual completeness (cf.\ the proofs of \cite[Thm.\ 6.3.2 and Cor.\ 6.3.4]{dilibertiLogicConcepts2category2025} and the discussion in \cite[Rem.\ 6.3.6]{dilibertiLogicConcepts2category2025}). 

The key idea underlying this section is that, for each of the four logics $\Hcal$ above, even though {a priori} we cannot fully characterise the doctrine $\ms T^{\mc H}$, we can still take an \emph{educated guess} for it. Indeed, in each case we can borrow a doctrine $\T$ from the literature on categorical logic which is characterised by the property that $\Hcal$ coincides with the class of morphisms whose direct image is a $\T$-algebra morphism: for the necessary background, we refer the reader to, e.g., \cite{garnerLexColimits2012,tendasFlatnessWeaklyLex2024}, or the discussion in \cite[\S 6.4]{dilibertiBiaccessibleBipresentable2Categories2024}). Concretely, this means that we take $\T$ to be given by:
\begin{itemize}
    \item[($\Hflat$)] the \emph{free pretopos} construction;
    \item[($\Hmatte$)] the \emph{free (Barr-)exact category} construction;
    \item[($\Hdom$)] the \emph{free lex category with strict initial object} construction;
    \item[($\Hpure$)] the \emph{free lextensive category} construction.
\end{itemize}
\looseness=-1
In each case, we are able to show that the usual `classifying topos' construction for $\ms T$-algebras lands in $\mc H$-injectives (cf.~\cref{lem:representables-lie-in-syntactic-category}). Together with the \emph{reduction lemma} below (\Cref{prop:generic-cc}), this will allow us to derive conceptual completeness for $\Hcal$.

\subsection{The reduction lemma}First note that, in each of the four cases above, small $\ms T$-algebras admit a
`classifying topos' construction in the traditional sense of categorical logic: that is, for any small $\ms T$-algebra $\mc A$, there is a subcanonical topology $J$ on $\mc A$ such that for any topos $\mc E$:
\[ \Alg(\ms T)(\mc A,\mc E) \simeq \Topoi(\mc E,\sh(\mc A,J)). \]
Concretely, the four topologies are given by the coherent topology on a pretopos, the regular topology on an exact category, the topology having an empty covering on the strict initial object for a lex category with a strict initial object, and the extensive topology on a lextensive category. 

We now show that these topoi formally belong to the respective logic $\Hcal$, and that it suffices to test conceptual completeness for $\Hcal$ against them: as emphasised already in \cite[Rem.\ 5.1.2]{dilibertiLogicConcepts2category2025}, we will then see how the crucial step to achieve conceptual completeness is to characterise their syntactic categories.

\begin{lem}\label{lem:representables-lie-in-syntactic-category}
For any small $\T$-algebra $\mc A$, the inclusion $j \colon \Sh(\mc A, J) \hook \Psh(\mc A)$ is an $\Hcal$-presentation of $\sh(\mc A, J)$. In particular, $\sh(\mc A, J)$ lies in $\WRInj(\Hcal)$. 

Moreover, there is a canonical inclusion:
\[ \mc A \hookrightarrow \Syn^{\Hcal}(\sh(\mc A, J)).\]

\end{lem}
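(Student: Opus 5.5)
The plan is to first show that $\Psh(\mc A)$ itself is in $\WRInj(\Hcal)$, which holds because presheaf topoi over lex categories are free topoi and so formally belong to every logic. The substantive task is then to show that the embedding $j \colon \Sh(\mc A, J) \hook \Psh(\mc A)$ is a morphism of $\Hcal$-injectives, and that $\Sh(\mc A,J)$ is itself right Kan injective. I would handle both at once via the standard description of right Kan extensions into a subtopos of a presheaf topos. Given $f\colon \Ecal\to\Fcal$ in $\Hcal$ and $x\colon \Ecal \to \Sh(\mc A,J)$, first form $r \coloneqq \ran_f(jx)\colon \Fcal \to \Psh(\mc A)$. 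By Diaconescu this corresponds to a flat functor $\mc A \to \Fcal$. Computing pointwise as in \cite{dilibertiGeometryCoherentTopoi2022}, the inverse image of $r$ should be $a \mapsto f_*x^*(a)$, i.e.\ $f_* \circ X$ where $X\colon \mc A \to \Ecal$ is the $\T$-algebra morphism corresponding to $x$.

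The key step is then to show that $r$ factors through $\Sh(\mc A,J)$. Equivalently, $f_*\circ X$ must send $J$-covers to epimorphic families, i.e.\ be a $\T$-algebra morphism $\mc A \to \Fcal$. This is exactly where the choice of $\T$ is used. By the cited characterisation, $f\in\Hcal$ precisely when $f_*$ is a $\T$-algebra morphism. Since $X$ preserves the $\T$-structure (finite disjoint coproducts and effective quotients for pretopoi, regular epis for exact categories, the strict initial object, or finite coproducts for lextensive categories), so does $f_* X$. For each of the four topologies, preserving the relevant structure is precisely what it means to preserve $J$-covers, so $f_*X$ is $J$-continuous. Once $r$ factors as $j r'$ with $j$ fully faithful, $r'$ is a right Kan extension of $x$ along $f$ in $\Topoi$, because $j$ reflects 2-cells. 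This single argument gives both that $\Sh(\mc A,J)$ is formally in $\Hcal$ and that $j$ preserves the extensions, so $j$ is an $\Hcal$-presentation.

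I expect the main obstacle to be justifying the pointwise formula $r^* \simeq f_*X$ on $\mc A$, i.e.\ checking that it really computes $\ran_f(jx)$ in $\Topoi$ and not just a lax version. I would first try to cite the description of right Kan extensions into free topoi from \cite{johnstoneInjectiveToposes1981,dilibertiGeometryCoherentTopoi2022}. Failing that, I would verify the universal property directly via Diaconescu's equivalence, using that $f_*$ is right adjoint to $f^*$ so that 2-cells $g f \Rightarrow jx$ correspond to 2-cells $g \Rightarrow r$. The flatness of $f_*X$ is automatic because $f_*$ preserves finite limits.

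For the last claim, take $a\in\mc A$. The geometric morphism $\yo_a \circ j \colon \Sh(\mc A,J)\to\Set[\Obb]$ picks out the representable $\yo_a$, which is a sheaf since $J$ is subcanonical. Here $\yo_a \colon \Psh(\mc A)\to\Set[\Obb]$ is a right adjoint in $\Topoi$ and hence preserves all right Kan extensions, and $j$ has just been shown to be a morphism of $\Hcal$-injectives. Their composite is therefore in $\WRInj(\Hcal)(\Sh(\mc A,J),\Set[\Obb]) = \Syn^{\Hcal}(\Sh(\mc A,J))$, exactly as in the second half of the proof of \Cref{prop:adjunction-syn-cl}. The Yoneda embedding $\mc A \hook \Sh(\mc A,J)$ is fully faithful, so its corestriction gives the canonical inclusion $\mc A\hook \Syn^{\Hcal}(\Sh(\mc A,J))$.
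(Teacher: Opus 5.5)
Your proposal is correct and follows the same route as the paper. For the first claim the paper simply cites \cite[Prop.~4.0.1]{dilibertiLogicConcepts2category2025}, using that direct images of $\Hcal$-maps are $\T$-algebra morphisms; your argument unpacks exactly that citation, by computing $\ran_f(jx)$ as the lex functor $f_*X$, checking it is $J$-continuous, and factoring through the representably fully faithful $j$. Your argument for the inclusion $\mc A \hookrightarrow \Syn^{\Hcal}(\Sh(\mc A,J))$ coincides with the paper's, relying on subcanonicity, on $j$ preserving the extensions, and on representables being right adjoints in $\Topoi$.
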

\begin{proof}
    The first claim follows from~\cite[Prop.\ 4.0.1]{dilibertiLogicConcepts2category2025} since, by assumption, direct images of geometric morphisms in $\Hcal$ are $\T$-algebra morphisms. To see that $\mc A$ is contained in the $\mc H$-syntactic category of $\sh(\mc A, J)$, for any $a \in \mc A$ and any $f \colon \mc E \to \mc F$ in $\Hcal$, consider the following diagram:
\[\begin{tikzcd}[sep = 30pt]
	{\mc E} & {\sh(\mc A, J)} \\
	& {\psh(\mc A)} \\
	{\mc F} & {\Set[\mbb O]}
	\arrow["x", from=1-1, to=1-2]
	\arrow["f"', from=1-1, to=3-1]
	\arrow["j", hook, from=1-2, to=2-2]
	\arrow["{\yo_a}", from=2-2, to=3-2]
	\arrow["{{\ran_{f}x}}"{description}, curve={height=-6pt}, dashed, from=3-1, to=1-2]
	\arrow["{{\ran_f(jx)}}"{description}, dashed, from=3-1, to=2-2]
	\arrow["{{\ran_f(\yo_a jx)}}"', dashed, from=3-1, to=3-2]
\end{tikzcd}\]
\looseness=-1
Note that, since the topology $J$ is subcanonical, the composite $\yo_a j$ can be identified with the representable $\yo_a \colon \sh(\mc A, J) \to \Set[\Obb]$ itself. Then, since $j$ preserves right Kan extensions along $f$ as it is an $\Hcal$-presentation of $\sh(\mc A, J)$, and since representable presheaves preserve all right Kan extensions as they are right adjoint in $\Topoi$, we have that 
\[ \yo_a \circ \ran_f x \cong \ran_f (\yo_a \circ x) .\]
Thus, $\yo_a$ lies in $\Syn^{\Hcal}(\sh(\mc A, J))\coloneqq \WRInj(\Hcal)(\sh(\mc A, J), \Set[\Obb])$, and hence the Yoneda embedding corestricts to an inclusion $\mc A \hook \Syn^{\Hcal}(\sh(\mc A, J))$.
\end{proof}

\begin{prop}[Reduction lemma]\label{prop:generic-cc}
    If the inclusion $\mc A \hook \Syn^{\mc H}(\sh(\mc A, J))$ is an equivalence for any small $\ms T$-algebra $\mc A$, then $\ms T^{\mc H} = \ms T$, and $\mc H$ is conceptually complete.
\end{prop}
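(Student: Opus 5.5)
The plan is in two stages: first identify $\T^{\Hcal}$ with $\T$ as doctrines, then deduce conceptual completeness from \Cref{cor:cc-as-every-algebra-is-syntactic-category}.

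\textbf{Step 1: the free case.} For $\mc C$ in $\Lex$, the free $\T$-algebra $\T\mc C$ has classifying topos
\[ \sh(\T\mc C, J) \simeq \Psh(\mc C), \]
since both classify the same structures:
\[ \Topoi(\mc E,\sh(\T\mc C,J)) \simeq \Alg(\T)(\T\mc C,\mc E) \simeq \Lex(\mc C,\mc E) \simeq \Topoi(\mc E,\Psh(\mc C)). \]
By hypothesis, applied to $\mc A=\T\mc C$, we then get
\[ \T^{\Hcal}(\mc C)=\Syn^{\Hcal}(\Psh\mc C)\simeq \Syn^{\Hcal}(\sh(\T\mc C,J))\simeq \T\mc C. \]
I would check that this equivalence is compatible with the inclusions of both into $\Psh(\mc C)$. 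Under the identification above, the inclusion $\T\mc C\hook \Syn^{\Hcal}(\sh(\T\mc C,J))$ of \Cref{lem:representables-lie-in-syntactic-category} is the canonical embedding $\T\mc C\hook\Psh\mc C$, which sends a free algebra object to the corresponding presheaf. Both $\T$ and $\T^{\Hcal}$ are sub-doctrines of $\Psh$, i.e.\ full replete subcategories of presheaves containing the representables. Equality of the underlying subcategories for every $\mc C$ should therefore force $\T=\T^{\Hcal}$ as lax-idempotent relative pseudomonads: the unit, the extension operation and the algebra structure are all determined by the subcategory, via left Kan extension and restriction. It follows that $\Alg(\T^{\Hcal})=\Alg(\T)$, since for lax-idempotent doctrines being an algebra is a property, namely the existence of a left adjoint to $\eta$.

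\textbf{Step 2: matching the classifying topoi.} Next I would show that $\Cl^{\Hcal}(\mc A)\simeq\sh(\mc A,J)$ for each small $\T$-algebra $\mc A$. The cleanest route is the universal property: by \Cref{thm:diaconescu} and the classical classifying property,
\[ \Topoi(\mc E,\Cl^{\Hcal}(\mc A))\simeq\Alg(\T^{\Hcal})(\mc A,\mc E)=\Alg(\T)(\mc A,\mc E)\simeq\Topoi(\mc E,\sh(\mc A,J)), \]
pseudonaturally in $\mc E$. Yoneda in $\Topoi$ then gives an equivalence of topoi compatible with the embeddings of $\mc A$. This requires that the morphisms of $\T$-algebras into a topos agree for the two doctrines, which follows once the doctrines coincide.

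\textbf{Step 3: conclusion.} The corestriction $i_{\mc A}\colon\mc A\hook\Syn^{\Hcal}(\Cl^{\Hcal}(\mc A))$ is then identified with the inclusion $\mc A\hook\Syn^{\Hcal}(\sh(\mc A,J))$ of \Cref{lem:representables-lie-in-syntactic-category}, because both are corestrictions of the Yoneda embedding. This inclusion is an equivalence by hypothesis, so \Cref{cor:cc-as-every-algebra-is-syntactic-category} yields conceptual completeness.

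The main obstacle is Step 1, specifically the coherence claim that equality of the objectwise subcategories $\T\mc C=\T^{\Hcal}\mc C$ inside $\Psh(\mc C)$ suffices to identify the doctrines. The objectwise equivalence alone is not enough: one must verify that the identification $\sh(\T\mc C,J)\simeq\Psh(\mc C)$ transports the Yoneda-type inclusions correctly, so that we have an equality of replete full subcategories rather than just an abstract equivalence. After that, the claim should reduce to the standard fact that a sub-pseudomonad of $\Psh$ is determined by its object assignment.
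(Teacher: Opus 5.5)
Your proposal is correct and follows the paper's proof essentially step for step. You identify $\sh(\T\mc C,J)\simeq\Psh(\mc C)$ by comparing universal properties, apply the hypothesis to $\mc A=\T\mc C$ to get $\T^{\Hcal}(\mc C)\simeq\T(\mc C)$, deduce $\Cl^{\Hcal}(\mc A)\simeq\sh(\mc A,J)$ from \Cref{thm:diaconescu} and the classical classifying property, and conclude via \Cref{cor:cc-as-every-algebra-is-syntactic-category}. The paper leaves implicit the compatibility with the embeddings into $\Psh(\mc C)$ that you single out as the main obstacle. It goes through as you expect: the inverse image of $\sh(\T\mc C,J)\to\Psh(\mc C)$, restricted to $\T\mc C$, is the generic $\T$-algebra morphism $X\mapsto\yo_X$.
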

\begin{proof}
    Note first that, for any category $\mc C$ in $\lex$,
    $ \sh(\ms T\mc C, J) \simeq \psh(\mc C), $
    since by the usual Diaconescu's theorem (e.g.,\ \cite[Thm.\ 7.7.2]{maclaneSheavesGeometryLogic1994}) we have 
    \[\Topoi(\mc E ,  \sh(\ms T\mc C, J)) \simeq \Alg(\T)(\T \mc C, \mc E) \simeq \LEX(\mc C, \mc E) \simeq \Topoi(\mc E, \Psh(\mc C))\]
    for any topos $\mc E$. Hence, by hypothesis, we have an equivalence:
    \[ \ms T(\mc C) \simeq \Syn^{\mc H}(\sh(\T \mc C, J)) \simeq \Syn^{\mc H}(\psh(\mc C)) = \ms T^{\mc H}(\mc C), \]
    which implies that the doctrine $\ms T^{\mc H}$ coincides with $\ms T$. In particular, for any small $\ms T$-algebra $\mc A$, the $\mc H$-classifying topos $\Cl^{\mc H}(\mc A)$ is equivalent to $\sh(\mc A, J)$, since by Diaconescu's equivalence (\Cref{thm:diaconescu}) we have that
    \[\Topoi(\mc E,\Cl^{\mc H}(\mc A)) \simeq  \Alg(\ms T^{\mc H})(\mc A,\mc E)  = \Alg(\T)(\mc A , \mc E)\simeq \Topoi(\mc E, \Sh(\mc A, J)) \]
    for any topos $\mc E$.  Thus, the hypothesis also entails that the inclusion $ \mc A \hook \Syn^{\mc H}\Cl^{\mc H}(\mc A)$
    is an equivalence for any small $\ms T$-algebra $\mc A$, which by \Cref{cor:cc-as-every-algebra-is-syntactic-category} means that $\mc H$ is conceptually complete. 
\end{proof}

\begin{rem}\label{rem:clever-sublogics}
    In fact, in the proofs below, we will single out convenient subclasses of maps $\mc H' \subseteq \mc H$, and show that for any small $\ms T$-algebra $\mc A$, the inclusion
\[ \mc A \hook \Syn^{\mc H'}(\sh(\mc A, J)) \]
is an equivalence. This clearly entails that $\mc A \simeq \Syn^{\mc H}(\sh(\mc A, J))$, since by $\mc H' \subseteq \mc H$ we know that the above inclusion factors through $\Syn^{\mc H}(\sh(\mc A, J))$. We point out here that the proofs in this section have a somewhat distinct flavour from the rest of this paper: in particular, the choices for the subclasses $\mc H'$ are quite \emph{specific} in each case, as we still lack a modular treatment of all logics. 
\end{rem}

\subsection{Coherent logic}\label{ssec:cc-coherent-logic}We begin by considering the case of \emph{coherent logic} $\Hflat$ (\Cref{def:coherent-logic}). Following the general strategy we described earlier, consider the \emph{free pretopos} doctrine $\T^{\mr{pretop}}$: since its algebras are pretopoi, note that $\Hflat$ coincides with the class of geometric morphisms whose direct images are $\T^{\mr{pretop}}$-algebra functors. Concretely, $\T^{\mr{pretop}}$ is defined by mapping a category $\mc C$ in $\lex$ to the full subcategory of $\Psh(\mc C)$ spanned by the \emph{coherent objects} (cf.\ \cite{garnerLexColimits2012}), whose definition we now recall from \cite[D3.3]{johnstoneSketchesElephantTopos2002}.

\begin{defn}[Compact objects]\label{def:compact-objects}
    An object $X$ in a topos $\mc E$ is \emph{compact} if the frame $\Sub_{\mc E}(X)$ is compact, i.e.\ if every union of subobjects of $X$ covering it admits a finite subcovering, and it is \emph{coherent} if moreover, for any pair of arrows $f\colon Y \to X$ and $g \colon Z \to X$ with $Y$ and $Z$ compact, the pullback $Y \times_X Z$ is compact.
\end{defn}

Towards this proof, we will rely on the subclass $\mc H_{\beta} \subseteq \mc H_{\mr{flat}}$ of \emph{$\beta$-maps} already considered in \cite{dilibertiGeometryCoherentTopoi2022, dilibertiLogicConcepts2category2025}, i.e.\ the geometric morphisms $i_I \colon \Set^I \hook \Sh(\beta I)$ induced, for each set $I$ as a discrete topological space, by the inclusion $i_I \colon I \hook \beta I$ of $I$ into its Stone-Čech compactification.

Fix from now on a \emph{coherent topos} $\Xcal$, that is, a topos of the form $\Sh(\mc C, J_{\mr{coh}})$ where $\mc C$ is a small pretopos and $J_{\mr{coh}}$ is the \emph{coherent coverage} on $\mc C$; for these definitions, we refer the reader to \cite[\S A1.4 and Ex.\ A2.1.11]{johnstoneSketchesElephantTopos2002}. By \Cref{lem:representables-lie-in-syntactic-category}, the inclusion $\Xcal \hook \Psh(\mc C)$ is an $\Hflat$-presentation of $\Xcal$, which thus lies in $\WRInj(\Hflat)$, and hence in $\WRInj(\Hcal_{\beta})$. We will characterise $\Syn^{\Hcal_\beta}(\Xcal)$ as the full subcategory of $\Xcal$ spanned by the coherent objects (see also \cite[Thm.\ 4.4.3]{dilibertiLogicConcepts2category2025}). This will entail that the inclusion $\mc C \hook \Syn^{\Hcal_\beta}(\Xcal)$ is an equivalence, and hence that $\Hcal_\beta$ and $\Hflat$ are conceptually complete by the reduction lemma (\Cref{prop:generic-cc}).

Based on the following observation, to show that all objects of $\Syn^{\Hcal_\beta}(\Xcal)$ are coherent it suffices to prove that all objects of $\Syn^{\Hcal_\beta}(\Xcal)$ are compact.

\begin{lem}\label{lem:syntactic-objects-are-compact-iff-coherent}
    The following are equivalent:
   \begin{enumerate}
       \item every object of $\Syn^{\Hcal_\beta}(\Xcal)$ is compact in $\Xcal$;
       \item every object of $\Syn^{\Hcal_\beta}(\Xcal)$ is coherent in $\Xcal$.
   \end{enumerate}
\begin{proof}
\looseness=-1
Clearly, it suffices to show that $(1)\Rightarrow(2)$, so let $X \in \Syn^{\Hcal_{\beta}}(\Xcal)$; to show that it is coherent, since coherent objects of $\Xcal$ determine a generating subcategory of $\Xcal$ (see, e.g., \cite[Thm.\ D3.3.7]{johnstoneSketchesElephantTopos2002}), we can reduce to show that the pullback $Y \times_X Z$ is compact for any pair of arrows $f\colon Y \to X$ and $g \colon Z \to X$ with $Y$ and $Z$ coherent. But in that case, $Y$ and $Z$ lie in $\Syn^{\Hcal_\beta}(\mc X)$ by~\cref{lem:representables-lie-in-syntactic-category}, hence so do $f$ and $g$ since the inclusion $\Syn^{\Hcal_\beta}(\mc X) \hook \Xcal$ is full, and hence so does $Y \times_X Z$ since $\Syn^{\Hcal_\beta}(\mc X)$ is closed under finite limits in $\Xcal$; in particular, by assumption, $Y\times_X Z$ is compact.
    \end{proof}
\end{lem}

We thus proceed to characterise compact objects in $\Xcal$ in terms of the $\beta$-maps.

\begin{prop}\label{prop:compact-objects-in-coherent-topos}
    An object $X \in \Xcal$ is compact if and only if, for each set $I$ and each geometric morphism $x \colon \Set^I \to \Xcal$, the canonical comparison 2-cell below $\sigma^X \colon X \circ \ran_{i_I} x \Rightarrow \ran_{i_I} (X \circ x)$ is an epimorphism
    \[\begin{tikzcd}[column sep = 35pt, row sep=25pt]
	{\Set^I} & {\mc X} \\
	{\sh(\beta I)} & {\Set[\mbb O]}
	\arrow["x", from=1-1, to=1-2]
	\arrow["{i_I}"', hook, from=1-1, to=2-1]
	\arrow["X", from=1-2, to=2-2]
	\arrow[""{name=0, anchor=center, inner sep=0}, "{{\ran_{i_I} x}}"{description}, dashed, from=2-1, to=1-2]
	\arrow[""{name=1, anchor=center, inner sep=0}, "{{\ran_{I_i} (X\circ x)}}"', dashed, from=2-1, to=2-2]
	\arrow["\sigma^X"{pos=0.3}, between={0.2}{0.8}, Rightarrow, from=0, to=1]
    \end{tikzcd}\]
\begin{proof}
    First, suppose $X$ is compact. Then, there exists a cover $e \colon Y \surj X$ where $Y$ is coherent (this also follows, e.g., from \cite[Thm.\ D3.3.7]{johnstoneSketchesElephantTopos2002}). Computing explicitly $(\ran_{i_I}(X\circ x))^*$ as $\lan_{\yo^{\Obb}}((i_I)_*x^*X^*)$, we can consider the following commuting diagram in $\Sh(\beta I)$:
\[\begin{tikzcd}
	{(\ran_{i_I}x)^*Y} & {(i_I)_*x^*(Y)} \\
	{(\ran_{i_I}x)^*X} & {(i_I)_*x^*(X)}
	\arrow["\sigma^Y", from=1-1, to=1-2]
	\arrow[from=1-1, to=2-1]
	\arrow[from=1-2, to=2-2]
	\arrow["\sigma^X"', from=2-1, to=2-2]
\end{tikzcd}\]
Note then that the vertical arrows are epimorphisms since so is $e$ and both $(\ran_{i_I}x)^*$ and $(i_I)_*x^*$ preserve epimorphisms; moreover, the top arrow is an isomorphism since $Y$ is coherent and hence it lies in $\Syn^{\Hcal_\beta}(\Xcal)$ by \Cref{lem:representables-lie-in-syntactic-category}. Therefore, the bottom arrow, corresponding to $\sigma^X \colon X \circ \ran_{i_I} x \Rightarrow \ran_{i_I} (X \circ x)$, is an epimorphism.

Conversely, suppose that $X$ is not compact. Let $\set{ X_i }_{i\in I}$ be the lattice of compact subobjects of $X$, which is a directed poset as compact objects are closed under finite coproducts and quotients. By assumption, none of the inclusions $X_i \hook X$ is an isomorphism; thus, since $\Xcal$ has enough points by Deligne's theorem \cite[Thm.\ D3.3.13]{johnstoneSketchesElephantTopos2002}, for each $i\in I$ we can choose a point $x_i \colon \Set \to \Xcal$ and an element $t_i \in x^*_i(X) \setminus x^*_i(X_i)$. Let then $\mu\in\beta I$ be an ultrafilter such that $\dv i \in \mu$ for each $i\in I$, which exists since $I$ is directed. Seeing $\mu$ as a point of $\Sh(\beta I)$, and calling $x \colon \Set^I \to \Xcal$ the geometric morphism induced by the family $\set{x_i}_{i\in I}$, we obtain the following diagram.
    \[\begin{tikzcd}[column sep = 35pt, row sep=25pt]
	& {\Set^I} & {\mc X} \\
	\Set & {\sh(\beta I)} & {\Set[\mbb O]}
	\arrow["x", from=1-2, to=1-3]
	\arrow["{{i_I}}"', hook, from=1-2, to=2-2]
	\arrow["X", from=1-3, to=2-3]
	\arrow["\mu"', from=2-1, to=2-2]
	\arrow[""{name=0, anchor=center, inner sep=0}, "{{{\ran_{i_I} x}}}"{description}, dashed, from=2-2, to=1-3]
	\arrow[""{name=1, anchor=center, inner sep=0}, "{{{\ran_{I_i} (X\circ x)}}}"', dashed, from=2-2, to=2-3]
	\arrow["\sigma^X"{pos=0.3}, between={0.2}{0.8}, Rightarrow, from=0, to=1]
    \end{tikzcd}\]

Towards a contradiction, suppose that $\sigma^X$ is an epimorphism. Then, in particular we have a surjection:
\[ \sigma_\mu^X \colon \mu^* (\ran_{i_I} x)^* (X) \surj \mu^*( i_I)_* x^*(X).\]
Write $X$ as a canonical colimit $X \cong \colim_{A \in \Xcal_{\mr{coh}}/X}A$ indexed by its category of elements over $\mc X_{\mr{coh}}$. Therefore, by the explicit computations of right Kan extensions for coherent topoi of \cite{dilibertiGeometryCoherentTopoi2022}, and since $\mu^*i_I$ is isomorphic to the ultraproduct functor $\int(-)d\mu$ as shown \emph{ibid.}, we can identify $\sigma_\mu^X$ with a surjection:
\[ \colim_{A \in \Xcal_{\mr{coh}}/X} \int x^*_i A d \mu \surj \int x^*_i (X) d \mu .\]

\looseness=-1
Note then that the family $(t_i)_{i\in I}$ defines an element of the ultraproduct in the codomain, so that by surjectivity there exists an arrow $a \colon A \to X$ from a coherent object $A$ and an element $u \in \int x_i^* A d \mu $ that maps to $(t_i)_{i\in I}$. Concretely, $u$ is represented by a family of elements $(u_i \in x^*_i(A))_{i\in U}$ for some $U \in \mu$, such that for all $i\in U$:
\[ x_i^*(a) (u_i) = t_i .\]
However, since $A$ is coherent, the image of $a \colon A \to X$ is a compact subobject of $X$, meaning that there exists some $i_0 \in I$ such that $X_{i_0}$ is the image of $a$, and hence $\dv i_0 \in \mu$. This implies that $(\dv i_0) \cap U \in \mu$, so that there exists some compact subobject $X_i \hook X$ containing $X_{i_0}$ and such that $t_i$ lies in the image of $x_i^*(a)$, namely $x_i^*(X_{i_0})$, which is absurd as $ x_i^*(X_{i_0})\subseteq x_i^*(X_i)$.
\end{proof}
\end{prop}

\begin{rem}
    The previous proposition can be seen as a rephrasing of \cite[Lem.\ 2.3.6]{lurieUltracategories2018} in our framework. Indeed, thinking of the objects of $\Xcal$ --- in hindsight --- as a \emph{left ultrafunctor} on the ultracategory of points of $\Xcal$, then we can conceptualise \Cref{prop:compact-objects-in-coherent-topos} as the crucial step towards identifying the \emph{ultrafunctors} with the coherent objects.
\end{rem}

\begin{thm}[Coherent logic is conceptually complete]\label{flatcc}
    The doctrine $\ms T^{\mc H_\beta}$ coincides with the free pretopos construction over $\lex$, and $\mc H_{\beta}$ is conceptually complete; the same holds for $\mc H_{\mr{flat}}$.
\end{thm}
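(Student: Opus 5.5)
Following the strategy announced before \Cref{lem:syntactic-objects-are-compact-iff-coherent}, we aim to show that for every small pretopos $\mc C$, with $\Xcal = \Sh(\mc C, J_{\mr{coh}})$, the inclusion $\mc C \hook \Syn^{\Hcal_\beta}(\Xcal)$ of \Cref{lem:representables-lie-in-syntactic-category} is an equivalence. Once this is established, the reduction lemma (\Cref{prop:generic-cc}), applied with $\T = \T^{\mr{pretop}}$ and the logic $\Hcal_\beta$, gives $\T^{\Hcal_\beta} = \T^{\mr{pretop}}$ and conceptual completeness of $\Hcal_\beta$. Since $\Hcal_\beta \subseteq \Hflat$, \Cref{rem:clever-sublogics} then gives the same equivalence $\mc C \simeq \Syn^{\Hflat}(\Xcal)$, so a second application of \Cref{prop:generic-cc} handles $\Hflat$.

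To identify $\Syn^{\Hcal_\beta}(\Xcal)$, first recall that the coherent objects of $\Xcal = \Sh(\mc C, J_{\mr{coh}})$ are, up to equivalence, exactly the representables. This is standard for a pretopos \cite[D3.3]{johnstoneSketchesElephantTopos2002}: the representables are coherent, and every coherent object is a finite coproduct of quotients of representables by equivalence relations, which already exist in $\mc C$. Hence it suffices to show that every $X \in \Syn^{\Hcal_\beta}(\Xcal)$ is coherent, and by \Cref{lem:syntactic-objects-are-compact-iff-coherent} it is enough to show that every such $X$ is compact.

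For compactness, we use the criterion of \Cref{prop:compact-objects-in-coherent-topos}. If $X$ lies in $\Syn^{\Hcal_\beta}(\Xcal)$, then $X \colon \Xcal \to \Set[\Obb]$ is a morphism of $\Hcal_\beta$-injectives. It therefore preserves the right Kan extensions along every $i_I \colon \Set^I \hook \Sh(\beta I)$, so the comparison $\sigma^X \colon X \circ \ran_{i_I}x \Rightarrow \ran_{i_I}(X\circ x)$ is invertible, and in particular an epimorphism, for every $x \colon \Set^I \to \Xcal$. By \Cref{prop:compact-objects-in-coherent-topos}, $X$ is compact.

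Essentially all the difficulty is contained in \Cref{prop:compact-objects-in-coherent-topos}, which we may assume, so the remaining points are bookkeeping. The one step needing care is making sure the hypotheses of \Cref{prop:generic-cc} hold for $\T^{\mr{pretop}}$ with respect to $\Hcal_\beta$, not only $\Hflat$. This is fine because the proof of \Cref{prop:generic-cc} only uses \Cref{lem:representables-lie-in-syntactic-category}, whose conclusion for $\Hflat$ implies it for the smaller class $\Hcal_\beta$, together with the classical Diaconescu theorem for the coherent topology.
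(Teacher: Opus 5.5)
Your proposal is correct and follows the paper's proof. Membership in $\Syn^{\Hcal_\beta}(\Xcal)$ makes each comparison $\sigma^X$ invertible, so \Cref{prop:compact-objects-in-coherent-topos} gives compactness, \Cref{lem:syntactic-objects-are-compact-iff-coherent} upgrades this to coherence, coherent objects of $\Sh(\mc C, J_{\mr{coh}})$ are the representables by \cite[Thm.\ D3.3.7]{johnstoneSketchesElephantTopos2002}, and \Cref{prop:generic-cc} with \Cref{rem:clever-sublogics} finishes the argument for both $\Hcal_\beta$ and $\Hflat$; your explicit check that \Cref{lem:representables-lie-in-syntactic-category} transfers from $\Hflat$ to the sublogic $\Hcal_\beta$ is a point the paper leaves implicit, and you handle it correctly.
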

\begin{proof}
    By \Cref{prop:compact-objects-in-coherent-topos} it follows that every object $X \in \Syn^{\mc H_{\beta}}(\mc X)$ is compact. Thus, by \Cref{lem:syntactic-objects-are-compact-iff-coherent}, it follows that every object in $\Syn^{\mc H_{\beta}}(\mc X)$ is coherent, meaning that the inclusion $\mc A \hook \Syn^{\Hcal_{\beta}}(\sh (\mc A, J_{\mr{coh}}))$ is an equivalence for each small pretopos $\mc A$ (e.g.,\ by \cite[Thm.\ D3.3.7]{johnstoneSketchesElephantTopos2002}). The rest follows then by \Cref{prop:generic-cc} and \Cref{rem:clever-sublogics}.
\end{proof}

\subsection{Regular logic}\label{ssec:cc-regular-logic}We proceed to the case of \emph{regular logic} $\Hmatte$ (\Cref{def:regular-logic}). This proof will proceed in formal analogy with the coherent case, starting from the \emph{free exact category} doctrine $\T^{\mr {ex}}$: since its algebras are exact categories, in this case too we have that $\Hmatte$ coincides with the class of geometric morphisms whose direct images are $\T^{\mr{ex}}$-algebra functors. Concretely, $\T^{\mr{ex}}$ is defined by mapping a category $\mc C$ in $\lex$ to the full subcategory of $\Psh(\mc C)$ spanned by the \emph{regular objects} (cf.~\cite{carboniRegularExactCompletions1998}), whose definition we now recall.

\begin{defn}[\protect{\cite[Def.\ 2.1.14]{caramelloTheoriesSitesToposes2018}}]
    An object $X$ in a topos $\mc E$ is \emph{supercompact} if $X$ is completely join-irreducible in the frame $\Sub_{\mc E}(X)$, and it is \emph{regular} if moreover, for any pair of arrows $f \colon Y \to X$ and $g \colon Z \to X$ with $Y$ and $Z$ supercompact, the pullback $Y \times_X Z$ is supercompact.
\end{defn}

Towards this proof too, we will single out a convenient sublogic $\Hcal_\alpha \subseteq \Hmatte$ for which we are able to compute (enough) syntactic categories explicitly. For each set $I$, let $\alpha I$ be its one-point compactification, that is, the topological space whose underlying set is the disjoint union $I\cup\set{\infty}$ of $I$ with an additional point, and whose open subsets are:
\begin{itemize}
    \item any subset of $I$, and
    \item any subset of the form $U \cup \{\infty\}$ where $U\in\mc P^\omega(I)$, where $\mc P^\omega(I)$ is the set of cofinite subsets of $I$.
\end{itemize}
We denote by $i_I\colon \Set^I \hook \Sh(\alpha I)$ the geometric morphism induced by the inclusion $i_I \colon I \hook \alpha I$. We refer to these geometric morphisms as the \emph{$\alpha$-maps}, and we denote by $\Hcal_\alpha$ the logic they define.

\begin{lem}
    $\Hcal_\alpha \subseteq \Hmatte$.
\begin{proof}
    Concretely, we need to show that, for each set $I$, the geometric morphism $i_I \colon \Set^I \hook \Sh(\alpha I)$ is matte, i.e.\ its direct image preserves epimorphisms; let then $f \colon Y \surj X$ be an epimorphism in $\Set^I$. To see that $(i_I)_*f$ is an epimorphism in $\Sh(\alpha I)$, it suffices to see that it is stalkwise surjective (see, e.g., \cite[Prop.\ II.6.6]{maclaneSheavesGeometryLogic1994}). Note then that, since every singleton $\{i\} \subseteq I$ is discrete in $\alpha I$, stalks of $(i_I)_*f$ at each $i \in I \subseteq \alpha I$ are simply given by the components $f_i \colon Y_i \surj X_i$, which are clearly surjective; thus, we only have to consider the stalk at $\infty \in \alpha I$, explicitly given by the universal map
    \[ ((i_I)_*Y)_\infty \cong \colim_{U \in \mc P^\omega(I) } Y(U) \to \colim_{U\in \mc P^\omega(I)} X(U) \cong ((i_I)_*X)_\infty.\]
    To see that the latter is surjective, note that each component map 
    \[ Y(U) \cong \prod_{i\in U}Y_i \surj \prod_{i\in U}X_i \cong X(U) \]
    is a surjection\footnote{Technically, using the Axiom of Choice in $\Set$.}; thus, the claim follows since $\mathcal P^\omega(I)$ is filtered and surjections are closed under filtered colimits.
\end{proof}
\end{lem}

Fix from now on a \emph{regular topos} $\Xcal$, that is, a topos of the form $\Sh(\mc C, J)$ where $\mc C$ is an \emph{exact category} and $J_{\mr{reg}}$ is the \emph{regular coverage}; for these definitions, we refer the reader to \cite[\S A1.3 and Ex.\ A2.1.11]{johnstoneSketchesElephantTopos2002}. Again by \Cref{lem:representables-lie-in-syntactic-category} we have that $\Xcal$ lies in $\WRInj(\Hmatte)$, and hence in $\WRInj(\Hcal_{\alpha})$ (cf.\ also \cite[Thm.\ 4.3.8]{dilibertiLogicConcepts2category2025}). We thus proceed as in the coherent case: first we will characterise $\Syn^{\Hcal_\alpha}(\mc X)$ as the full subcategory of $\Xcal$ spanned by the {regular objects}, and then we will conclude that both $\Hcal_\alpha$ and $\Hmatte$ are conceptually complete by \Cref{prop:generic-cc}. For more properties of supercompact and regular objects and regular topoi implicitly used in the proofs below, we also refer the reader to \cite{rogersSUPERCOMPACTLYCOMPACTLYGENERATED2021}. 

\begin{lem}
    The following are equivalent:
    \begin{enumerate}
        \item every object of $\Syn^{\Hcal_\alpha}(\mc X)$ is supercompact in $\Xcal$;
        \item every object of $\Syn^{\Hcal_\alpha}(\mc X)$ is regular in $\Xcal$.
    \end{enumerate}
\end{lem}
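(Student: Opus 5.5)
The plan mirrors the proof of \Cref{lem:syntactic-objects-are-compact-iff-coherent} in the coherent case. The implication $(2)\Rightarrow(1)$ is immediate, since regular objects are by definition supercompact. So the work is in $(1)\Rightarrow(2)$. Assume every object of $\Syn^{\Hcal_\alpha}(\Xcal)$ is supercompact, and fix $X \in \Syn^{\Hcal_\alpha}(\Xcal)$. To show $X$ is regular we must prove that $Y\times_X Z$ is supercompact for all arrows $f\colon Y\to X$ and $g\colon Z \to X$ with $Y$ and $Z$ supercompact.

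The first step is to reduce to the case where $Y$ and $Z$ are regular. We use that in a regular topos $\Xcal = \Sh(\mc C, J_{\mr{reg}})$ the regular objects (equivalently, the objects of the exact category $\mc C$ up to equivalence) form a generating subcategory; this is the regular analogue of \cite[Thm.\ D3.3.7]{johnstoneSketchesElephantTopos2002}, cf.\ \cite{rogersSUPERCOMPACTLYCOMPACTLYGENERATED2021}. Given a supercompact $Y$, cover it by a jointly epimorphic family of regular objects. Since $Y$ is supercompact, i.e.\ completely join-irreducible in $\Sub_{\Xcal}(Y)$, one member of the family already gives an epimorphism $Y' \surj Y$ with $Y'$ regular. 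Do the same to obtain $Z' \surj Z$. Then $Y'\times_X Z' \to Y\times_X Z$ is an epimorphism, because epimorphisms are pullback-stable in a topos. Since a quotient of a supercompact object is supercompact, it suffices to show that $Y'\times_X Z'$ is supercompact.

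The second step is the closure argument. By \Cref{lem:representables-lie-in-syntactic-category}, the regular objects $Y'$ and $Z'$ lie in $\Syn^{\Hcal_\alpha}(\Xcal)$. Since $\Syn^{\Hcal_\alpha}(\Xcal)\hook\Xcal$ is full, the arrows $f e$ and $g e'$ lie in it too. Since it is closed under finite limits in $\Xcal$, the pullback $Y'\times_X Z'$ lies in $\Syn^{\Hcal_\alpha}(\Xcal)$. By the assumption (1), $Y'\times_X Z'$ is therefore supercompact, which concludes the argument.

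The main obstacle is the first step: justifying that every supercompact object of a regular topos is a quotient of a regular object. I would derive this from the fact that representables $\yo_c$ for $c\in\mc C$ are regular objects and generate $\Xcal$, combined with join-irreducibility. An alternative route avoids this altogether: test regularity only against a generating family, as was done in the coherent case. Concretely, if the pullbacks $Y\times_X Z$ are supercompact whenever $Y$ and $Z$ are regular, then they are supercompact for arbitrary supercompact $Y$ and $Z$, by the quotient argument above.
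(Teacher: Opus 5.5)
Your proof is correct and takes the same route as the paper, whose proof just refers back to the coherent case (\Cref{lem:syntactic-objects-are-compact-iff-coherent}): reduce to arrows out of generating objects, which lie in $\Syn^{\Hcal_\alpha}(\Xcal)$ by \Cref{lem:representables-lie-in-syntactic-category}, then use fullness and closure under finite limits. Your quotient argument makes explicit the step the paper compresses into ``we can reduce to generators'': a single epimorphism from a representable onto a supercompact object, pullback-stability of epimorphisms, and supercompactness passing to quotients.
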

\begin{proof}
    Analogous to \Cref{lem:syntactic-objects-are-compact-iff-coherent}.
\end{proof}

\begin{prop}\label{prop:supercompact-objects-in-regular-topos}
    An object $X \in \Xcal$ is supercompact if and only if, for each set $I$ and each geometric morphism $x \colon \Set^I \to \Xcal$, the canonical comparison 2-cell below $\sigma^X \colon X \circ \ran_{i_I} x \Rightarrow \ran_{i_I} (X \circ x)$ is an epimorphism (as maps in $\sh(\alpha I)$).
    \[\begin{tikzcd}[column sep = 35pt, row sep=25pt]
	{\Set^I} & {\mc X} \\
	{\sh(\alpha I)} & {\Set[\mbb O]}
	\arrow["x", from=1-1, to=1-2]
	\arrow["{i_I}"', hook, from=1-1, to=2-1]
	\arrow["X", from=1-2, to=2-2]
	\arrow[""{name=0, anchor=center, inner sep=0}, "{{\ran_{i_I} x}}"{description}, dashed, from=2-1, to=1-2]
	\arrow[""{name=1, anchor=center, inner sep=0}, "{{\ran_{I_i} (X\circ x)}}"', dashed, from=2-1, to=2-2]
	\arrow["\sigma^X"{pos=0.3}, between={0.2}{0.8}, Rightarrow, from=0, to=1]
    \end{tikzcd}\]
\begin{proof}
Analogous to \Cref{prop:compact-objects-in-coherent-topos}: instead of the ultrafilter $\mu$, here one should consider the point corresponding to $\infty$. In particular, we use the fact that if $X$ is supercompact then there exists a cover $e \colon Y \surj X$ where $Y$ is regular (which follows exactly as the analogous statement for compact and coherent objects in coherent topoi), and that regular topoi also have enough points as they are coherent.
\end{proof}
\end{prop}

\begin{thm}[Regular logic is conceptually complete]
    The doctrine $\ms T^{\mc H_\alpha}$ coincides with the free exact category construction over $\lex$, and $\mc H_{\alpha}$ is conceptually complete; the same holds for $\Hmatte$.
\begin{proof}
    Analogous to \Cref{flatcc}.
\end{proof}
\end{thm}

\subsection{Essentially algebraic logic with falsum}\label{ssec:cc-ess-alg-logic-w-falsum}
\looseness=-1
We now move on to the case of \emph{essentially algebraic logic with falsum} $\Hdom$ (\Cref{def:ess-alg-logic-with-falsum}). Following our general strategy, consider the \emph{free lex category with a strict initial object} doctrine $\T^{0}$; direct images of maps in $\Hdom$ are $\ms T^0$-algebra morphisms as they preserve the initial object.

As a convenient sublogic, consider the class $\mc H_{\mr{dense}} \subseteq \Hdom$ of \emph{dense embeddings}, i.e.\ dominant geometric morphisms which are also embeddings. We first describe a general family of dense embeddings: for any inhabited category $\mc C$, let $\mc C^{\rhd}$ be the category obtained by freely adjoining a terminal object to $\mc C$, which we denote by $\infty$.

\begin{lem}\label{lem:canonical-dense-maps}
     For each inhabited small category $\mc C$, the inclusion $\mc C \hook \mc C^{\rhd}$ induces a dense embedding $\eta \colon \psh(\mc C) \hook \psh(\mc C^{\rhd})$. 
\end{lem}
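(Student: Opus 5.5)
The geometric morphism $\eta \colon \psh(\mc C) \to \psh(\mc C^{\rhd})$ induced by the full inclusion $u \colon \mc C \hook \mc C^{\rhd}$ will be taken to be the essential geometric morphism with inverse image $\eta^* = u^*$ (restriction along $u$) and direct image $\eta_* = \ran_{u}$, the right Kan extension along $u\op$. The proof then splits into two verifications: that $\eta$ is an embedding, and that $\eta$ is dominant.

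For the embedding claim, we need the counit $\eta^*\eta_* \Rightarrow \id$ to be invertible. Since $u$ is fully faithful, right Kan extension along it is fully faithful, i.e.\ $u^*\ran_u P \cong P$ for every presheaf $P$ on $\mc C$. This is the standard fact that Kan extensions along fully faithful functors restrict back to the original functor, computed pointwise via the end formula: at $c \in \mc C$ the relevant comma category has an initial object given by the identity. So $\eta$ is a geometric embedding.

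For dominance, $\eta_*$ must preserve the initial object, so we compute $\ran_u(\emptyset)$, where $\emptyset$ is the empty presheaf on $\mc C$. On objects $c$ of $\mc C$ the value is $\emptyset(c)=\emptyset$ by the previous step, so the only point is the new terminal object $\infty$. Pointwise,
\[ (\ran_u P)(\infty) \cong \lim_{(c,\, \infty \to c) } P(c), \]
where the indexing category consists of the objects $c\in\mc C$ equipped with a map $\infty\to c$ in $\mc C^{\rhd}$, with the variance dictated by working in $\mc C\op$. Equivalently, $(\ran_u P)(\infty) \cong \psh(\mc C^{\rhd})(\yo_\infty, \ran_u P)\cong \psh(\mc C)(u^*\yo_\infty, P)$. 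Since $\infty$ is terminal in $\mc C^{\rhd}$, we have $\mc C^{\rhd}(c,\infty)=1$ for all $c$, so $u^*\yo_\infty$ is the terminal presheaf $1$ on $\mc C$, and hence $(\ran_u P)(\infty)\cong \lim_{\mc C\op} P$, the set of global sections of $P$. For $P=\emptyset$ this is $\psh(\mc C)(1,\emptyset)$, which is empty precisely because $\mc C$ is inhabited: any $c\in\mc C$ would have to be sent to an element of $\emptyset(c)$. Thus $\eta_*\emptyset$ is the empty presheaf, i.e.\ initial, and $\eta$ is dominant.

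The only step needing care is the identification of $(\ran_u P)(\infty)$ with global sections through the adjunction $u^*\dashv\ran_u$, together with getting the variance correct. The inhabitation hypothesis enters exactly there: for empty $\mc C$, global sections of the empty presheaf form a singleton and dominance fails.
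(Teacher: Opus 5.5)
Your proof is correct and follows the paper's argument: you check dominance by computing $\eta_*0$ at the objects of $\mc C$ and at $\infty$ via $\eta^*\yo_\infty\cong 1$, with inhabitedness giving $\psh(\mc C)(1,0)=\emptyset$, and you additionally spell out the embedding part ($u$ fully faithful gives $u^*\ran_u\cong\id$), which the paper takes for granted. One harmless slip: in your comma-category description the structure maps should be $c\to\infty$ in $\mc C^{\rhd}$ (i.e.\ $\infty\to c$ in $(\mc C^{\rhd})\op$), not $\infty\to c$ in $\mc C^{\rhd}$; this does not affect the proof, since your adjunction computation carries the argument and is correct.
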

\begin{proof}
Concretely, we have to show that $\eta_* \colon \Psh(\mc C)\to \Psh(\mc C^{\rhd})$ preserves the initial object $0 \in \Psh(\mc C)$. First, note that the inverse image $\eta^* \colon \Psh(\mc C^{\rhd}) \to \Psh(\mc C)$ is obtained by restriction along the inclusion $\mc C\hook\mc C^{\rhd}$, which implies that for any $c\in\mc C$ we have $\eta^*\yo_c \cong \yo_c$, and hence:
    \[ \eta_*0 (c) \cong  \Psh(\mc C)(\eta^*\yo_c, 0) \cong \Psh(\mc C)(\yo_c, 0) \cong \emptyset \]
Then, since both $\yo$ and $\eta^*$ preserve finite limits, note that $\eta^* \yo_\infty$ is the terminal object $1 \in \Psh(\mc C)$. Thus, since $\Psh(\mc C)$ is non-degenerate as $\mc C$ is inhabited:
\[ \eta_* 0 (\infty) \cong  \Psh(\mc C)(\eta^*\yo_\infty, 0) \cong \Psh(\mc C)(1, 0) \cong \emptyset.\]
Therefore, $\eta_* 0 \cong 0$.
\end{proof}

\begin{rem}[Stalk at $\infty$ computes global sections]\label{rem:stalk-dense-global-sections}
    Note that, for any $X\in\psh(\mc C)$, the stalk of $\eta_*X$ at $\infty$ is simply given by the set of global sections of $X$:
    \[ (\eta_*X)_\infty \cong \eta_*X(\infty) \cong \psh(\mc C)(1,X). \]
\end{rem}

For a category $\mc C$ in $\lex$ having a strict initial object, its `classifying topos' is given by $\sh(\mc C, J_0)$ where $J_0$ is the topology having an empty covering on the initial object. Note that 
\[ \sh(\mc C, J_0) \simeq \psh(\mc C_+), \]
where $\mc C_+ \subseteq \mc C$ is the full subcategory of non-initial objects. In particular, the canonical embedding $\sh(\mc C, J_0) \simeq \psh(\mc C_+) \hook \psh(\mc C)$ is \emph{closed} (cf.\ \cite[Lem.\ 1.4.6]{dilibertiLogicConcepts2category2025}). Using the dense maps described in \Cref{lem:canonical-dense-maps}, we can calculate $\Hcal_{\mr{dense}}$-syntactic categories of topoi of this form.

\begin{prop}\label{prop:syn-cats-of-dense}
    For any category $\mc C$ in $\lex$ having a strict initial object, the canonical inclusion below is an equivalence:
    \[ \mc C \hook \Syn^{\mc H_{\mr{dense}}}(\sh(\mc C, J_0))\]
\end{prop}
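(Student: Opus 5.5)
We need to show that every $X \in \Syn^{\Hcal_{\mr{dense}}}(\Xcal)$, where $\Xcal = \sh(\mc C, J_0)\simeq \psh(\mc C_+)$, is (isomorphic to) a representable $\yo_c$ for some $c\in\mc C$. The inclusion $\mc C \hook \Syn^{\Hcal_{\mr{dense}}}(\Xcal)$ is already given by \Cref{lem:representables-lie-in-syntactic-category}; note that the initial object $0\in\mc C$ goes to the empty presheaf, since $J_0$ covers $0$ by the empty family. So it remains to prove essential surjectivity, and we will do so by testing $X$ against the dense embeddings $\eta\colon \psh(\mc D)\hook\psh(\mc D^{\rhd})$ of \Cref{lem:canonical-dense-maps}, for suitably chosen inhabited $\mc D$.

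The plan has three steps. First, we compute the right Kan extension along $\eta$ of a geometric morphism $x\colon \psh(\mc D)\to\Xcal$. Since $\Xcal$ is a presheaf topos, hence free, these extensions exist. Since $\eta$ is an embedding, $\ran_\eta x$ restricts to $x$ on $\psh(\mc D)$. Its only new information is its stalk at the adjoined point $\infty$, which is a point of $\Xcal$, i.e.\ a flat functor $\mc C_+\op$-indexed; equivalently, a lex functor $\mc C\to \Set$ sending $0$ to $\emptyset$. On the other side, $\ran_\eta(X\circ x)$ is computed in $\Set[\Obb]$ as $\eta_*$ applied to $x^*X$, and by \Cref{rem:stalk-dense-global-sections} its stalk at $\infty$ is the set of global sections $\psh(\mc D)(1,x^*X)$. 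Preservation of the Kan extension by $X$ therefore yields an isomorphism $X_{p}\cong \Gamma(x^*X)$, where $p$ denotes the point of $\Xcal$ obtained as the stalk at $\infty$ of $\ran_\eta x$. Concretely, we expect $p^*$ to be the global-sections-of-$x^*$ functor restricted to representables $\yo_c$, $c\in\mc C$; in other words, $p^*$ is the left Kan extension of $c\mapsto \Gamma(x^*\yo_c)$. This functor is lex, and it sends $0$ to $\emptyset$ precisely because $\mc D$ is inhabited.

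Second, we choose the test data so as to extract representability. We take $\mc D = (\mc C_+/X)$, or rather its opposite, arranged as the category of elements of $X$; or we take the canonical choice $\mc D=\mc C_+$ with $x=\id$. With the identity, the isomorphism reads
\[ \colim_{c\in\mc C}\, \Gamma(\yo_c)\times X(c) \;\cong\; \Gamma(X)=\lim X ,\]
or a variant of it. The left side is the stalk of $X$ at the point $p$ given by $c\mapsto \mc C(1,c)$, which is lex and sends $0\mapsto\emptyset$ as long as $\mc C$ is nondegenerate. To actually force $X$ to be representable, we will use slices instead. For each $c$ and each element $s\in X(c)$, we apply the argument to $\Xcal/\yo_c\simeq\psh(\mc C_+/c)$, where $\mc C/c$ is again lex with a strict initial object. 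The expected conclusion is that $X$ is a \emph{flat} presheaf over $\mc C$, i.e.\ that its category of elements is cofiltered, together with the property that the colimit of its elements is attained. The goal is to show that the category of elements of $X$ has a terminal object, which is exactly representability.

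Third, and this is the main obstacle: finiteness. Flatness alone gives an ind-object, not a representable, so we need a further test that excludes infinite filtered diagrams. I would take $\mc D$ to be the category of elements $\int X$ itself, and $x$ the geometric morphism whose inverse image sends $\yo_c$ to the presheaf $\mc C(\pi(-),c)$ on $\int X$; this $x$ is the generic element of $X$. Then $\Gamma(x^*X)$ contains the tautological global section, and the isomorphism with the stalk at $\infty$ expresses that section as coming from a single $\yo_c$ at some stage of a colimit. That stage yields an element $s\in X(c)$ through which every element factors, and hence $X\cong\yo_c$. If $X$ is empty, then $X\cong\yo_0$, because of the $J_0$ identification. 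I expect the delicate point to be the careful identification of the stalk of $\ran_\eta x$ at $\infty$ as a filtered colimit over $\mc C$, which relies on $\mc C$ being lex. Once that identification is secured, the final step gives the equivalence.
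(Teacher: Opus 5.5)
Your Step 3 is the paper's proof. You test $X$ against the dense embedding $\eta\colon\psh(\int_{\mc C_+}X)\hook\psh((\int_{\mc C_+}X)^{\rhd})$, taking $x=e$ to be the étale morphism $\psh(\mc C_+)/X\to\psh(\mc C_+)$ (your ``generic element''). You then compare the stalks at $\infty$ of $(\ran_\eta e)^*X$ and $\eta_*e^*X$, which are $\colim_{(c,s)\in\int X}\psh(\mc C_+)(X,\yo_c)$ and $\psh(\mc C_+)(X,X)$; the tautological section is $\id_X$.

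A few cleanups:
\begin{itemize}
\item Step 2 can be dropped. Its flatness claim is only ``expected'', and Step 3 does not use it: it only needs $X\not\cong 0$, so that $\int X$ is inhabited and $\eta$ is dense.
\item The point you flag as delicate is not delicate. The stalk at $\infty$ is evaluation at the terminal object of $(\int X)^{\rhd}$, which commutes with all colimits. So the colimit is over $\int X$, it need not be filtered, and no lexness is involved. What must be checked is only $\Gamma(e^*\yo_c)\cong\psh(\mc C_+)(X,\yo_c)$.
\item $\psh(\mc C_+)$ is not ``free'' in the paper's sense, since $\mc C_+$ is generally not lex. The extensions $\ran_\eta x$ exist because $\sh(\mc C,J_0)$ lies in $\WRInj(\Hdom)$ (\Cref{lem:representables-lie-in-syntactic-category}), i.e.\ because $\eta_*$ preserves $0$.
\end{itemize}

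The genuine gap is your final inference. Lifting $\id_X$ gives $(c,s)\in\int X$ and $t\colon X\to\yo_c$ with $s\circ t=\id_X$. This makes $s$ only \emph{weakly} terminal in $\int X$. Every element $y$ factors as $s\circ(t\circ y)$, but not uniquely, since $s$ is a split epi that need not be monic. So you have shown that $X$ is a retract of $\yo_c$, not that $X\cong\yo_c$.

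The stronger claim is false in general. Suppose $X=\yo_{c'}$ with $c'$ a proper retract of $c$ via $i\colon c'\to c$ and $r\colon c\to c'$. Then $(c,r)$ together with $t=\yo_i$ is a legitimate lift of $\id_X$, yet $X\not\cong\yo_c$.

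The missing step is idempotent splitting, and this is where lexness of $\mc C$ actually enters:
\begin{itemize}
\item $t\circ s=\yo_\varepsilon$ for an idempotent $\varepsilon$ on $c$, because $J_0$ is subcanonical and so $\mc C\hook\sh(\mc C,J_0)$ is fully faithful.
\item $\varepsilon$ splits in $\mc C$ through the equaliser of $\varepsilon$ and $\id_c$; call the splitting $d$.
\item Any functor preserves split idempotents, and splittings are unique up to isomorphism, so $X\cong\yo_{d}$.
\end{itemize}
The paper phrases this as Cauchy-completeness of $\mc C_+$. It uses strictness of $0$ to see that a splitting of an idempotent on a non-initial object is again non-initial.
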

\begin{proof}
First, note that the inclusion above is well-defined since by \Cref{lem:representables-lie-in-syntactic-category} we have an embedding $\mc C \hook \Syn^{\mc H_{\mr{dom}}}(\sh(\mc C, J_0))$, while $\Hcal_{\mr{dense}} \subseteq \Hdom$ gives an embedding $\Syn^{\mc H_{\mr{dom}}}(\sh(\mc C, J_0)) \hook \Syn^{\mc H_{\mr{dense}}}(\sh(\mc C, J_0))$. 

Fix $X\in \Sh(\mc C, J_0)$. If $X \cong 0$, then by the strictness of the initial object in $\mc C$ we clearly have that $X \cong \yo_0$. Assume then that $X$ is non-initial and suppose it lies in $\Syn^{\mc H_{\mr{dense}}}(\sh(\mc C, J_0))$. Through the equivalence $\Sh(\mc C, J_0) \simeq \Psh(\mc C_+)$, we can identify $X$ with a presheaf on $\mc C_+$. We will now show that $X$ is a retract of a representable in $\Psh(\mc C_+)$: since $\mc C$ is Cauchy-complete as it is left exact, and since the splitting of idempotents of a non-initial object is non-initial if the initial object is strict, we know that $\mc C_+$ is Cauchy-complete as well, so that in that case $X$ will itself be representable.

Consider the following diagram, where $e \colon \Psh(\mc C_+)/X \to \Psh(\mc C_+)$ is the canonical étale geometric morphism induced by slicing:
    \[\begin{tikzcd}
	& {\psh(\int_{\mc C_+}X) \simeq \psh(\mc C_+)/X } & {\psh(\mc C_+)} \\
	\Set & {\psh((\int_{\mc C_+}X)^\rhd)} & {\Set[\mbb O]}
	\arrow["e", from=1-2, to=1-3]
	\arrow["\eta"', hook, from=1-2, to=2-2]
	\arrow["X", from=1-3, to=2-3]
	\arrow["\infty"', from=2-1, to=2-2]
	\arrow[""{name=0, anchor=center, inner sep=0}, "{\ran_\eta e}"{description}, dashed, from=2-2, to=1-3]
	\arrow[""{name=1, anchor=center, inner sep=0}, "{\eta_*e^*X}"', dashed, from=2-2, to=2-3]
    \end{tikzcd}\]
Note that $e^* \colon \Psh(\mc C_+) \to \Psh(\mc C_+)/X$ acts by pulling back along the terminal map $X \to 1$. 
Therefore, writing $X$ as the (by assumption, inhabited) colimit $X \cong \colim_{(c, x) \in \int_{\mc C_+}X} \yo_c$, we have:
\[ (\ran_\eta e)^* X \cong \colim_{(c, x) \in \int_{\mc C_+}X} (\ran_\eta e)^*\yo_c \cong \colim_{(c, x) \in \int_{\mc C_+}X} \eta_* e^* \yo_c \]
Hence, by \Cref{rem:stalk-dense-global-sections}, the stalk of $(\ran_\eta e)^* X$ at $\infty$ can be computed as
\[ ((\ran_\eta e)^* X)_\infty \cong  \colim_{(c, x) \in \int_{\mc C_+}X} \Psh(\mc C_+)(X, \yo_c)\]
since, for each $c \in \mc C$, global sections of $e^*\yo_c$ --- i.e.\ the projection $X\times \yo_c \to X$ --- in $\Psh(\mc C_+)/X$ correspond simply to arrows $X \to \yo_c$ in $\Psh(\mc C_+)$.
On the other hand, again by \Cref{rem:stalk-dense-global-sections}, we have:
\[ (\eta_* e^*X)_\infty \cong \Psh(\mc C_+)(X,X)\]

Note then that, since $X$ is a morphism of $\Hcal_{\mr{dense}}$-injectives and $\eta$ is dense, $(\ran_\eta e)^* X \cong \eta_* e^* X$. In other words, this means that the canonical map 
\[ \colim_{(c, x) \in \int_{\mc C_+}X} \Psh(\mc C_+)(X, \yo_c) \to \Psh(\mc C_+)(X,X)\]
is a bijection: in particular, there exist some $c \in \mc C_+$ and some arrow $x \colon \yo_c \to X$ such that, for some $s \colon X \to \yo_c$, the composite $s \circ x$ is the identity. This means that $X$ is a retract of the representable $\yo_c$, which concludes the proof.
\end{proof}

\begin{rem}
Compare the previous to the proofs of~\cref{prop:compact-objects-in-coherent-topos} in the coherent case and~\cref{prop:supercompact-objects-in-regular-topos} in the regular case. A notable difference here is that, since in this case the fixed doctrine $\T^0$ does not endow a category with image factorisations, we are not able to reduce the argument to considerations on the lattice of subobjects of a sheaf $X \in \Sh(\mc C, J_0)$ and instead we have to consider its whole category of elements $\int_{\mc C}X$.

Recall that our goal is to show that, if $X$ lies in $\Syn^{\Hcal_{\mr{dense}}}(\Sh(\mc C, J_0))$, then $X$ actually belongs to $\mc C$. From the point of view of its category of elements, note that $X$ is representable if and only if $\int_{\mc C}X$ has a terminal object. Thus, we may freely adjoin a terminal object to $\int_{\mc C}X$ and observe the two right Kan extensions along the associated embedding $\Psh(\int_{\mc C} X) \hook \Psh((\int_{\mc C}X)^{\rhd})$: in this case, the embedding is dense when $X$ is non-initial, which is what allows us for a clean proof. As we will see in~\cref{ssec:cc-finit-disj-logic}, a similar argument will prove conceptual completeness for finitary disjunctive logic, but it will require a slightly more involved construction.
\end{rem}

\begin{rem}\label{rem:dense-maps-if-epi-then-iso}
The proof of~\cref{prop:syn-cats-of-dense} reveals that we actually proved a stronger claim: if the canonical comparison 2-cells $X \circ \ran_f g \Rightarrow \ran_f (X \circ g)$ for all dense embeddings $f$ are \emph{epimorphisms}, then in fact they are isomorphisms.
\end{rem}

\begin{thm}[Essentially algebraic logic with falsum is conceptually complete]
\looseness=-1
The doctrine $\ms T^{\mc H_{\mr{dense}}}$ coincides with the free lex category with a strict initial object construction over $\lex$, and $\Hcal_{\mr{dense}}$ is conceptually complete; the same holds for $\mc H_{\mr{dom}}$.
\begin{proof}
    Combine \Cref{prop:syn-cats-of-dense} with \Cref{prop:generic-cc}.
\end{proof}
\end{thm}

\subsection{Finitary disjunctive logic}\label{ssec:cc-finit-disj-logic}Finally, we consider the case of \emph{finitary disjunctive logic} $\Hpure$ (\Cref{def:finit-disj-logic}). This proof will proceed in formal analogy with the previous case, starting from the \emph{free lextensive category\footnote{Recall that a category is \emph{extensive} if it has finite coproducts which are moreover disjoint and pullback-stable, and it is \emph{lextensive} if it moreover admits finite limits.}} doctrine $\T^{\mr{lext}}$: since its algebras are lextensive categories, we have that $\Hpure$ coincides with the class of geometric morphisms whose direct images are $\T^{\mr{lext}}$-algebra morphisms. 

We note here that, for a lextensive category $\mc C$, its `classifying topos' is given by $\Sh(\mc C, J_{\mr{ext}})$ where $J_{\mr{ext}}$ is the \emph{extensive topology}, i.e.\ the topology generated by finite families of inclusions into a coproduct. Concretely,
\[ \sh(\mc C, J_{\mr{ext}}) \simeq \mathsf{FP}(\mc C\op,\Set), \]
where the right-hand side is the category of finite-product-preserving functors $\mc C\op \to \Set$.

In this case, too, as a convenient sublogic of $\Hpure$, we will consider the class $\Hpure'$ of pure embeddings. However, here it will take us some extra work to describe a general class of pure embeddings which we will use in order to characterise the $\Hpure$-syntactic category of a lextensive topos. We begin by recalling that, by~\cite[Lem.\ C3.4.12]{johnstoneSketchesElephantTopos2002}, a geometric morphism is pure if and only if its direct image preserves the internal Boolean algebra $2 \coloneqq 1 + 1$, and hence in the following lemma we describe the latter explicitly in toposes of the form $\Sh(\mc C, J_{\mr{ext}})$.

\begin{lem}\label{lem:dec-subs-of-extensive-object}
    For any lextensive category $\mc C$, the sheaf $2\in\sh(\mc C, J_{\mr {ext}})$ is computed as follows,
    \[ 2(c) \cong \mr{Sub}_{\mc C}^{\mr{dec}}(c), \]
    where $\mr{Sub}_{\mc C}^{\mr{dec}}(c)$ is the Boolean algebra of decidable subobjects of $c$ in $\mc C$.
\end{lem}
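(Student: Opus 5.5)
We compute $2 = 1+1$ in $\Sh(\mc C, J_{\mr{ext}}) \simeq \mathsf{FP}(\mc C\op,\Set)$ via the associated sheaf functor. The terminal sheaf is the representable $\yo_1$, since $J_{\mr{ext}}$ is subcanonical. In $\Psh(\mc C)$ the coproduct $\yo_1 + \yo_1$ is not a sheaf: its value at $c$ is $\mc C(c,1)\sqcup \mc C(c,1) = \{0,1\}$. Our candidate for the sheafification is the representable $\yo_{1+1}$, with the two coprojections $1 \to 1+1$ inducing $\yo_1 + \yo_1 \to \yo_{1+1}$.

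The plan has two parts. First, show that $\yo_{1+1}$ is the coproduct $1+1$ in the sheaf topos. Second, identify $\mc C(c, 1+1)$ with $\mr{Sub}^{\mr{dec}}_{\mc C}(c)$, naturally in $c$ and as Boolean algebras.

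For the first part, note that $\yo_{1+1}$ is a sheaf. It then suffices to show that $\yo_1+\yo_1 \to \yo_{1+1}$ is $J_{\mr{ext}}$-bidense, or to check the universal property directly. Let $F \in \mathsf{FP}(\mc C\op,\Set)$ and take two global sections $s,t \in F(1)$. Since $F$ preserves finite products, $F(1+1) \cong F(1)\times F(1)$, so the pair $(s,t)$ determines a unique element of $F(1+1)$. By Yoneda this is a unique map $\yo_{1+1}\to F$ restricting to $s$ and $t$ along the coprojections. Hence $\yo_{1+1}$ is the coproduct of two copies of $\yo_1$ among finite-product-preserving presheaves, i.e. $2 \cong \yo_{1+1}$ and $2(c) \cong \mc C(c,1+1)$.

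For the second part, send $f \colon c \to 1+1$ to the pullback of the first coprojection $1 \to 1+1$ along $f$. Extensivity gives $c \cong f^*(1) + f^*(1)'$, where the two summands are the pullbacks along the two coprojections. Coproducts are disjoint, so these summands are complementary subobjects, and the first one is decidable. Conversely, a decidable subobject $d \hookrightarrow c$ with complement $d'$ satisfies $c \cong d + d'$. It therefore yields $f = !+! \colon d+d' \to 1+1$, whose pullback is $d$ again by pullback-stability of coproducts.

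Both composites are identities. One direction follows from extensivity: a map into $1+1$ is determined by the decomposition of its domain. The other follows because complements are unique in the distributive lattice of subobjects. Naturality in $c$ holds because pullback along $g\colon c'\to c$ corresponds to precomposition with $g$. The Boolean operations agree with those of the internal Boolean algebra $2$ because both are computed pointwise in $\Set$ through the same decompositions.

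The main obstacle is the first part: checking cleanly that $\yo_{1+1}$ really is the sheaf coproduct, i.e. that coproducts in $\mathsf{FP}(\mc C\op,\Set)$ of representables are computed by coproducts in $\mc C$. The Yoneda argument above handles this, using only that representables are sheaves and that sheaves turn finite coproducts of $\mc C$ into products.
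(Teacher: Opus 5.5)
Your argument is correct, but it follows a different route from the paper's. The paper computes $2$ as the sheafification of the constant presheaf $\{0,1\}$, namely $2(c)\cong\colim_{c_1+\cdots+c_n\cong c}2^n$. It then checks by hand that sending a decomposition and a subset $I\subseteq n$ to $\coprod_{i\in I}c_i$ is surjective onto $\mr{Sub}^{\mr{dec}}_{\mc C}(c)$, and injective via common refinements of decompositions.

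You instead use $\Sh(\mc C,J_{\mr{ext}})\simeq\mathsf{FP}(\mc C\op,\Set)$ to see that the Yoneda embedding preserves finite coproducts, so $2\cong\yo_{1+1}$ is representable. Extensivity, in the form $\mc C/(1+1)\simeq\mc C\times\mc C$, then identifies arrows $c\to 1+1$ with complemented summands of $c$.

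Your route is shorter and gives naturality in $c$ for free. It also avoids the plus construction, which is more delicate than the paper's one-line justification suggests. Since $0$ is covered by the empty family, the constant presheaf $\{0,1\}$ is not actually separated, so the colimit formula really has to be justified via its separated reflection.

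What the paper's route buys is a description of $2$ purely in terms of covering decompositions. This is the form in which the lemma is reused later for the site $(\mc C\rhd\mr B_{\mc C},J_B)$. There $2$ must also be evaluated at elements $U\in\mr B_{\mc C}$ and is not representable in general. So the colimit computation transfers verbatim, while your representability argument does not apply as such.

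Two minor points on your write-up:

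First, a lextensive category need not be regular, so $\Sub_{\mc C}(c)$ need not have joins, and you should not appeal to a distributive lattice of subobjects. Uniqueness of complements follows directly from extensivity instead. If $d+d'\cong c\cong d+d''$, pull back the first decomposition along $d''\rightarrowtail c$ and use $d\wedge d''=0$; this gives $d''\le d'$, and symmetrically $d'\le d''$.

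Second, your closing remark on the Boolean operations is only a sketch. This is harmless, since the paper likewise establishes just the bijection.
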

\begin{proof}
    Since the constant presheaf of value $2 = \{0,1\}$ is always separated,  note first that its sheafification is computed on each $c\in \mc C$ as the following colimit:
    \[ 2(c) \cong \colim_{c_1+\cdots+c_{n}\cong c}2^{n}. \]

    Consider then the function
    \[ f \colon 2(c) \cong \colim_{c_1+\cdots+c_{n}\cong c}2^{n} \to \mr{Sub}_{\mc C}^{\mr{dec}}(c), \]
    defined by mapping a decomposition $c_1+\cdots+c_{n}\cong c$ and an element of $2^{n}$, i.e.\ a subset $I \subseteq {n}$, to the coproduct $\coprod_{i\in I}c_i$ seen as a (decidable) subobject of $c$. On one hand, the map $f$ is evidently surjective, since any subobject $c_1 \inj c$ has a complement $c_2 \inj c$, so that $c_1$ is reached as the $f$-image of the decomposition $c_1 + c_2 \cong c$ together with the element $(1,0) \in 2^2$. On the other hand, to see that $f$ is also injective, suppose that $c_0+c_1$ and $c_0'+c_1'$ are two decomposition of a decidable subobject $d \inj c$; then, the decomposition 
    \[ \coprod_{i,j \in 2}c_i \cap c_j' \cong d\]
    refines both decompositions, which shows that the two decompositions are identified in the colimit, and hence $f$ agrees on them. The general case of two decompositions $c_1+\dots+c_n \cong d$ is analogous.
\end{proof}

Fix from here on an extensive category $\mc C$. We denote by $\mathrm{B}_{\mc C}$ the Boolean algebra of decidable subterminal objects in $\Sh(\mc C, J_{\mr{ext}})$, equivalently given as:
\[ \mathrm{B}_{\mc C} \coloneqq \sh(\mc C, J_{\mr{ext}})(1,2). \]
We define the \emph{join} category $\mc C \rhd \mathrm{B}_{\mc C}$ as follows:
\begin{itemize}
    \item[--] its objects are the disjoint union of the objects of $\mc C$ and the elements of $\mathrm{B}_{\mc C}$;
    \item[--] its morphisms are either the morphisms of $\mc C$, or a morphism in $\mathrm{B}_{\mc C}$, or a unique morphism $c \to U$ if there is a (necessarily unique) morphism $\yo_c \to U$ in $\Sh(\mc C, J_{\mr{ext}})$.
\end{itemize}

\begin{rem}
    Note that the join $\mc C \rhd \mathrm{B}_{\mc C}$ is \emph{not} the full subcategory of $\sh(\mc C, J_{\mr{ext}})$ spanned by the objects of $\mc C$ and the decidable subterminal objects. While in the former there are no morphisms going from elements of $\mathrm{B}_{\mc C}$ to objects of $\mc C$, in the latter there could be some, corresponding (for instance) to global sections of representables.
\end{rem}

We then define a topology $J_B$ on $\mc C \rhd \mathrm{B}_{\mc C}$ as follows:
\begin{itemize}
    \item[--] covering sieves of an object $c\in\mc C$ are given by $J_{\mr{ext}}$-covering sieves of $c$ in $\mc C$, i.e.\ sieves containing a finite family of inclusions $\set{ c_i \hook c }_{i\leq n}$ inducing an isomorphism $\coprod_{i\leq n}c_i\cong c$;
    \item[--] covering sieves of an element $U \in \mathrm{B}_{\mc C}$ are those sieves containing a finite pairwise-disjoint family $\set{ U_i }_{i\leq n}$ in $\mathrm{B}_{\mc C}$ such that $U = \bigvee_{i\leq n}U_i$.
\end{itemize}

\begin{lem}\label{lem:fin-disj-topology-well-def}
    The topology $J_B$ on $\mc C \rhd \mathrm{B}_{\mc C}$ is well-defined.
\begin{proof}
    Maximal sieves are clearly covering. To show stability under pullback, note first that covering sieves on $c \in \mc C$ may only be pulled back along a morphism of $\mc C$, and hence they are stable since $J_{\mr{ext}}$ is a topology on $\mc C$. Covering sieves on $U \in \mathrm{B}_{\mc C}$ are also trivially stable under pullback along a morphism of $\mathrm{B}_{\mc C}$, so that we can reduce to consider the pullback of a covering sieve $\set{ U_i }_{i\leq n}$ on $U$ along a morphism $c \to U$ for some $c \in \mc C$. By definition, this means that there is a map $\yo_c \to U$ in $\Sh(\mc C, J_{\mr{ext}})$: pulling back $\set{ U_i }_{i\leq n}$ along it in $\Sh(\mc C, J_{\mr{ext}})$, we obtain a family of decidable subobjects $\set{S_i \hook \yo_c}_{i\leq n}$ of $\yo_c$ in $\Sh(\mc C, J_{\mr {ext}})$ such that $\yo_c \cong \coprod_{i\leq n}S_i$. Since decidable subobjects of $\yo_c$ correspond precisely to morphisms $\yo_c \to 2$, i.e.\ to elements of $2(c)$, by~\cref{lem:dec-subs-of-extensive-object} we know that all such subobjects $S_i$ are of the form $\yo_{c_i}$ where $c_i \hook c$ is a decidable subobject in $\mc C$; hence, they are a $J_{\mr{ext}}$-cover of $c$ in $\mc C$. Similarly one can show that $J_B$ satisfies local character.
\end{proof}
\end{lem}

We can finally introduce the desired class of pure embeddings, which will be used to characterise the necessary $\Hpure$-syntactic categories towards an application of the reduction lemma.

\begin{lem}
    The canonical inclusion $\mc C \hook \mc C \rhd \mathrm{B}_{\mc C}$ induces a pure embedding 
    \[ \eta \colon \Sh(\mc C, J_{\mr{ext}}) \hook \Sh(\mc C \rhd \mathrm{B}_{\mc C}, J_B).\]
\begin{proof}
    \looseness=-1
    By construction, the inclusion $\iota_{\mc C} \colon \mc C \hook \mc C \rhd \mathrm{B}_{\mc C}$ is a comorphism of sites $(\mc C, J_{\mr{ext}}) \to (\mc C \rhd \mathrm{B}_{\mc C}, J_B)$, i.e. $\iota_{\mc C}$ is cover lifting, since if a sieve on $c$ in $\mc C$ is a covering in $\mc C \rhd \mathrm{B}_{\mc C}$, then it is also a covering in $\mc C$. Hence it induces a geometric morphism $\eta \colon \Sh(\mc C, J_{\mr{ext}}) \to \Sh(\mc C \rhd \mathrm{B}_{\mc C}, J_B)$. The fact that $\eta$ is an embedding follows by~\cite[Prop. 7.6]{caramelloDensenessConditionsMorphisms2020} from the fact that $\iota_{\mc C}$ is fully faithful. 

    To see that $\eta$ is pure, we can reduce to show that $\eta_* (2) \cong 2$. Note that for any $c \in \mc C$ we can identify $\eta^* \yo_c$ with the representable $\yo_c$, and hence:
    \[ \eta_*2 (c)  \cong \Sh(\mc C, J_{\mr{ext}})(\yo_c, 2) \cong \Sub_{\mc C}^{\mr{dec}}(c) \cong \Sh(\mc C \rhd \mathrm{B}_{\mc C} , J_B )(\yo_c , 2) \cong 2(c), \]
    where the third isomorphism follows by \Cref{lem:dec-subs-of-extensive-object} since $J_B$-covers of $c$ in $\mc C \rhd \mathrm{B}_{\mc C}$ correspond exactly to $J_{\mr{ext}}$-covers of $c$ in $\mc C$. On the other hand, for any $U \in \mathrm{B}_{\mc C}$, by construction we have 
    \[ \eta^* \yo_U (c) \cong (\mc C \rhd \mathrm{B}_{\mc C})(c, U) \cong \Sh(\mc C, J_{\mr{ext}})(\yo_c, U) \cong U(c)\]
    for each $c\in \mc C$, i.e.\ $\eta^*\yo_U \cong U$, from which:
    \[ \eta_*2 (U) \cong \Sh(\mc C, J_{\mr {ext}})( U , 2) \cong \mathrm{B}_{\mc C}/U \cong \Sh(\mc C \rhd \mathrm{B}_{\mc C})(U, 2)\cong 2(U),\]
    where the second isomorphism follows by construction while the third isomorphism follows again by \Cref{lem:dec-subs-of-extensive-object}. It follows that $\eta_*2 \cong 2$, and hence $\eta$ is pure.
\end{proof}
\end{lem}

\begin{prop}\label{prop:syn-cats-of-pure-embeddings}
    For any lextensive category $\mc C$, the canonical inclusion below is an equivalence:
    \[ \mc C \hook \Syn^{\mc H'_{\mr{pure}}}(\sh(\mc C, J_{\mr{ext}})). \]
\end{prop}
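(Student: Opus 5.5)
We follow the pattern of \Cref{prop:syn-cats-of-dense}, replacing the free terminal object by the join $\mc C \rhd \mathrm{B}$ with decidable subterminals, applied not to $\mc C$ itself but to the category of elements of a candidate object. First, the inclusion is well defined: \Cref{lem:representables-lie-in-syntactic-category} places $\mc C$ inside $\Syn^{\Hpure}(\sh(\mc C, J_{\mr{ext}}))$, and this is contained in $\Syn^{\Hpure'}$ since $\Hpure' \subseteq \Hpure$. For the converse, let $X \in \sh(\mc C,J_{\mr{ext}}) \simeq \mathsf{FP}(\mc C\op,\Set)$ lie in $\Syn^{\Hpure'}$. We want to show that $X$ is representable.

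\textbf{Step 1: reduce to a new extensive site.} Consider the slice $\sh(\mc C,J_{\mr{ext}})/X$ with its étale morphism $e$. This slice is again of the form $\sh(\mc D, J_{\mr{ext}})$, where $\mc D = \int_{\mc C} X$ is the category of elements. It is extensive because $X$ preserves finite products, so that $X(c_1+c_2) \cong X(c_1)\times X(c_2)$. The subterminals $1 \to 2$ of the slice are the decidable subobjects of $X$. Applying the previous lemma to $\mc D$ gives a pure embedding $\eta \colon \sh(\mc D,J_{\mr{ext}}) \hook \sh(\mc D \rhd \mathrm{B}_{\mc D}, J_B)$. Since $X \in \Syn^{\Hpure'}$, we obtain an isomorphism $X\circ \ran_\eta e \cong \ran_\eta(X\circ e) = \eta_* e^* X$.

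\textbf{Step 2: evaluate both sides at the top element $\top = X \in \mathrm{B}_{\mc D}$,} or at any $U \in \mathrm{B}_{\mc D}$, which plays the role of $\infty$. As in \Cref{rem:stalk-dense-global-sections}, since $\eta^*\yo_U \cong U$, we have
\[ (\eta_*Y)(U) \cong \sh(\mc D,J_{\mr{ext}})(U,Y). \]
Hence the right-hand side at $\top$ is $\sh(\mc C,J_{\mr{ext}})(X,X)$. For the left-hand side, write $X$ as the colimit $\colim_{(c,x)}\yo_c$ and use that $(\ran_\eta e)^*$ is cocontinuous. The sheaf $(\ran_\eta e)^*X$ is then the sheafification of the presheaf colimit of the $\eta_*e^*\yo_c$. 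Evaluated at $\top$, a section of this sheaf is only locally a section of the colimit: there is a finite disjoint decomposition $\top = \bigvee_i U_i$ in $\mathrm{B}_{\mc D}$ and, on each $U_i$, a map $U_i \to \yo_{c_i}$ together with a compatible arrow $\yo_{c_i} \to X$. The isomorphism applied to $\id_X$ therefore yields a decomposition $X \cong \coprod_i U_i$ into decidable pieces, each $U_i$ being a retract of some representable $\yo_{c_i}$ via $U_i \to \yo_{c_i} \to X$ factoring the inclusion.

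\textbf{Step 3: conclude.} A retract of a representable is representable, because $\mc C$ is lex and hence Cauchy complete. So each $U_i \cong \yo_{d_i}$, and then $X \cong \yo_{\coprod_i d_i}$, since $\yo$ preserves finite coproducts into $\mathsf{FP}$-sheaves. The degenerate case $X \cong 0$ is handled as in the dense case, because the empty coproduct is representable by the initial object.

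\textbf{Main obstacle.} The hard part is Step 2: computing $(\ran_\eta e)^* X$ at elements of $\mathrm{B}_{\mc D}$. Unlike the dense case, the topology $J_B$ has nontrivial covers on $\mathrm{B}_{\mc D}$, so the colimit must be sheafified. One must check that the $J_B$-local description genuinely produces retractions onto the pieces $U_i$. This requires identifying the maps $U \to c$ in the join and tracking the restriction of the comparison map along covers. I would first verify the identification $\sh(\mc C,J_{\mr{ext}})/X \simeq \sh(\int X, J_{\mr{ext}})$, and then carry out this local computation carefully.
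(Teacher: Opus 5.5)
Your proposal is correct and follows the paper's proof in all essentials: you slice by $X$ to get the extensive site $\int_{\mc C}X$, use the pure embedding into the join with its decidable subterminals, evaluate both sides at the top element, read off local data from the sheafification, and conclude by Cauchy completeness. The one difference is at the end, and there your wording needs a small repair. The paper commutes the colimit over $\int_{\mc C}X$ (sifted, since it has finite coproducts) with the finite product over the pieces, so a single $c$ works and $X$ is directly a retract of $\yo_c$. You instead keep one $c_i$ per piece, which is fine, except that a factorisation $U_i \xrightarrow{u_i} \yo_{c_i} \xrightarrow{s_i} X$ of the inclusion does not by itself make $U_i$ a retract of $\yo_{c_i}$. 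Glue the pieces instead: $\coprod_i u_i$ and $[s_1,\dots,s_n]$ exhibit $X\cong\coprod_i U_i$ as a retract of $\coprod_i \yo_{c_i}\cong \yo_{\coprod_i c_i}$. Alternatively, pull $U_i$ back along $s_i$ to a complemented, hence representable, summand of $\yo_{c_i}$.
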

\begin{proof}
    As for \Cref{prop:syn-cats-of-dense}, note first that the inclusion above is well-defined by \Cref{lem:representables-lie-in-syntactic-category}. Fix then $X \in \Syn^{\Hpure'}(\Sh(\mc C, J_{\mr {ext}}))$: as in the proof of \emph{ibid.}, we will now show that $X$ is a retract of a representable in $\Sh(\mc C, J_{\mr {ext}})$: since $\mc C$ is Cauchy-complete as it is left exact, $X$ itself will be representable.

    Consider the category of elements $\int_{\mc C}X$ of $X$: the fibration $\int_{\mc C}X \to \mc C$ preserves and creates finite coproducts (since $\int_{\mc C}X$ can be viewed as the restricted slice category $\mc C/X$), and hence $\int_{\mc C}X$ is extensive. The sheaf topos on $\int_{\mc C}X$ with the extensive topology is, by construction, the \'etale topos over $X$:
        \[\textstyle \sh(\int_{\mc C}X, J_{\mr{ext}}) \simeq \sh(\mc C, J_{\mr{ext}})/X. \]
    Let $B$ the Boolean algebra of decidable subterminal objects in $\sh(\int_{\mc C}X, J_{\mr{ext}})$ and consider the following diagram, where $e \colon \sh(\mc C, J_{\mr{ext}})/X \to \sh(\mc C, J_{\mr{ext}})$ is the canonical \'etale geometric morphism induced by slicing:
\[\begin{tikzcd}
	{\sh(\int_{\mc C}X, J_{\mr{ext}}) \simeq \Sh(\mc C, J_{\mr{ext}})/X} & {\Sh(\mc C, J_{\mr{ext}})} \\
	{\sh(\int_{\mc C}X \rhd  B, J_B)} & {\Set[\mbb O]}
	\arrow["e", from=1-1, to=1-2]
	\arrow["\eta"', hook, from=1-1, to=2-1]
	\arrow["X", from=1-2, to=2-2]
	\arrow[""{name=0, anchor=center, inner sep=0}, "{{\ran_\eta e}}"{description}, dashed, from=2-1, to=1-2]
	\arrow[""{name=1, anchor=center, inner sep=0}, "{{\eta_*e^*X}}"', dashed, from=2-1, to=2-2]
\end{tikzcd}\]

Writing $X$ as the colimit $X \cong \colim_{(c, x) \in \int_{\mc C}X} \yo_c$, we have
\[ (\ran_\eta e)^* X  \cong \colim_{(c, x) \in \int_{\mc C}X} \eta_* e^* \yo_c. \]
To evaluate this colimit at $1 \in B$, we may compute its value as the sheafification of the colimit in the presheaf category as follows,
    \begin{align*}
        ((\ran_\eta e)^* X)(1)
        &\cong \colim_{U_1+\cdots+U_n\cong X}\prod_{i\le n} \colim_{(c, x) \in \int_{\mc C}X} \sh(\mc C, J_{\mr{ext}})/X(U_i,e^*\yo_c) \\
        &\cong \colim_{(c, x) \in \int_{\mc C}X} \colim_{U_1+\cdots+U_n\cong X}\prod_{i\le n}\sh(\mc C, J_{\mr {ext}})(U_i,\yo_c)
    \end{align*}
Here the first isomorphism holds since, by construction, the coverings of $1$ in $\int_{\mc C}X \rhd B$ are decompositions of $X$ via decidable subobjects $U_1,\dots, U_n$; the second isomorphism holds since $\int_{\mc C}X$ is extensive, thus in particular sifted, and hence colimits over it commute with finite products (and clearly with other colimits). 

On the other hand, the value of $\ran_{\eta} (X\circ e) \cong \eta_* e^* X$ at $1$ is computed as:
\[\textstyle \eta_*e^* X (1) \cong \sh(\int_{\mc C}X, J_{\mr{ext}})(1, e^*X) \cong \sh(\mc C, J_{\mr{ext}})(X,X). \]
Thus, since $X$ is a morphism of $\Hpure'$-injectives and $\eta$ is a pure embedding, $(\ran_\eta e)^* X \cong \eta_* e^* X$. 
In other words, this means that the canonical map
\[ \colim_{(c, x) \in \int_{\mc C}X} \colim_{U_1+\cdots+U_n\cong X}\prod_{i\le n}\sh(\mc C, J_{\mr {ext}})(U_i,\yo_c) \to \sh(\mc C, J_{\mr{ext}})(X,X) \]
is a bijection: in particular, there exist some $c \in \mc C$ and some decomposition $U_1 + \cdots + U_n \cong X$ with maps $u_i \colon U_i \to \yo_c$ such that, for some $s \colon \yo_c \to X$, the composite
\[
\begin{tikzcd}[column sep = 50pt]
    X \cong U_1 + \cdots + U_n \ar[r, "{[u_1,\cdots,u_n]}"] & \yo_c \ar[r, "s"] & X
\end{tikzcd}
\]
is the identity. This means that $X$ is a retract of the representable $\yo_c$, which concludes the proof.
\end{proof}

\begin{rem}
    As in \Cref{rem:dense-maps-if-epi-then-iso}, note that here, too, we actually proved a stronger claim. Indeed, it is easy to see that the terminal object in $\sh(\mc C \rhd \mathrm{B}_{\mc C}, J_B)$ is always projective, meaning that its global section functor preserves epimorphisms; therefore, the previous proof shows if the canonical 2-cells $X \circ \ran_f g \Rightarrow \ran_f (X \circ g)$ for all pure embeddings $f$ are \emph{epimorphisms}, then in fact they are isomorphisms.
\end{rem}

\begin{thm}[Finitary disjunctive logic is conceptually complete]
    The doctrine $\ms T^{\Hpure'}$ coincides with the free lextensive category construction over $\lex$, and $\Hpure'$ is conceptually complete; the same holds for $\Hpure$.
\begin{proof}
    Combine \Cref{prop:syn-cats-of-pure-embeddings} with \Cref{prop:generic-cc}.
\end{proof}
\end{thm}

\subsection{Other examples and remarks} \label{sec:comments}

In this subsection we make some reflections on the previous results and we discuss possible further examples. The examples and the reflections often come with different questions in mind concerning where to bring our theory next.

One immediate observation is that, currently, all our examples of conceptually complete logics sit `below' the coherent fragment, in the sense that their monad of syntax is a submonad of $\T^{\Hflat}$. However, we do not currently believe that this limitation is necessary towards a proof of conceptual completeness. For instance, the example below introduces very natural generalisations of the coherent fragment: while these fragments sit above the coherent one, they are still bounded, and thus amenable to our theory.
    
\begin{exa}[$\kappa$-coherent logic]For a fixed regular cardinal $\kappa$, we denote by $\Hcal_{\kappa\text{-flat}}$ the class of geometric morphisms whose direct images preserve $\kappa$-small coproducts and regular epimorphism. This is a $\kappa$-parametric generalisation of coherent logic for which we could `guess' its monad of syntax $\mathsf{T}^{\Hcal_{\kappa\text{-flat}}}$ to be the free $\kappa$-pretopos construction on a lex category, and try to prove conceptual completeness by applying the reduction lemma.
\end{exa}

It would also be interesting to explore some \emph{unbounded} logics $\Hcal$, and see to what extent one can study conceptual completeness for them in the sense that the classifying topos construction is fully faithful as a 2-functor on \emph{large} $\T^{\Hcal}$-algebras (cf.\ \cite[Rem.\ 6.3.8]{dilibertiLogicConcepts2category2025}). Besides full geometric logic, which is then trivially conceptually complete (cf.\ \cite[Rem.\ 6.2.2]{dilibertiLogicConcepts2category2025}), consider the examples below.

\begin{exa}[$\kappa$-disjunctive and infinitary disjunctive logic]
\looseness=-1
For a fixed regular cardinal $\kappa$, we denote by $\Hcal_{\kappa\text{-inn}}$ the class of geometric morphisms whose direct images preserve $\kappa$-small coproducts; more generally, we denote by $\Hcal_{\mr{inn}}$ the unbounded case, where direct images preserve all small coproducts.
These are infinitary generalisations of $\Hpure$, the latter of which was introduced in \cite{dilibertiLogicConcepts2category2025} under the name of \emph{infinitary disjunctive logic}. Currently, we believe that $\Hcal_{\text{inn}}$ should behave quite similarly to its finitary fragment, and should enjoy some form of conceptual completeness.
\end{exa}

Another very interesting direction would be that of \textit{exotic} fragments of geometric logic, as in, examples which are not inspired by the tradition of categorical logic. Is there any chance we can have genuinely new examples? And if so, what is their logical interpretation? In \cite[\S 7.3]{dilibertiLogicConcepts2category2025}, the authors tie this question with a \textit{call to syntax}, a quest for a calculus that is yet to be developed. For the moment, we present a new interesting example.

\begin{exa}
    For a fixed regular cardinal $\kappa$, we denote by $\Hcal_{\kappa}$ the class of geometric morphisms whose direct image preserves all $\kappa$-small colimits.\footnote{This logic was initially considered in the first draft of \cite{dilibertiGeometryCoherentTopoi2022}, essentially by mistake.} For $\kappa > \omega$, this logic coincides with $\Hcal_{\kappa\text{-flat}}$: this is due to the fact that if a category admits countable coproducts and effective quotients, then it also admits all coequalisers, and these coequalisers are preserved by $\kappa$-coherent functors\footnote{Indeed, for an arbitrary parallel pair of morphisms one can construct the smallest equivalence relation containing this pair via a countable union, and the coequaliser for the original pair coincides with the coequaliser for this equivalence relation.}. When $\kappa = \omega$, instead, we obtain a genuinely new logic, for which very little is understood. At the moment, we do not have a logical interpretation for this logic, nor we know if we should expect it to be conceptually complete, nor we can even guess its monad of syntax.
\end{exa}

Staying on the stream of exotic examples, a logic for which we currently have no logical understanding, but for which we foresee some logical meaning is \textit{integral logic}, which we shall recall below.

\begin{exa}[Integral logic]
\looseness=-1
    We denote by $\Hcal_{\text{cl}}$ the logic defined by the single geometric morphism $0 \to \Set$, introduced in \cite{dilibertiLogicConcepts2category2025} under the name of \emph{integral logic}. 
\end{exa}

\begin{rem}[Is there any non-conceptually-complete logic?]
\looseness=-1
At this point, we naturally wonder whether there is any fragment of geometric logic that is not conceptually complete. Although we currently do not have an answer, this question will strongly inspire the next section.
\end{rem}

\section{Conservatively embedded logics}\label{sec:cons-emb}
\label{ssec:conservatively-embedded}
\looseness=-1{
In \Cref{sec:everyalgisasyncat} we provided a practical criterion to prove conceptual completeness for a fragment of geometric logic, and in \Cref{sec:examples} we employed this criterion to prove that a number of fragments are indeed conceptually complete. In this section, we focus on a further exploration of the general theme of conceptual completeness, and we derive a new proof-theoretic property of conceptually complete logics.

We start by taking a step back and considering the adjunction established in \Cref{sec:everyalgisasyncat} for a bounded logic $\Hcal$, which is a reflection if and only if the logic is conceptually complete. 
\[\begin{tikzcd}
	{\WRInj(\Hcal)} && {{\alg(\T^{\Hcal})}\op}
	\arrow[""{name=0, anchor=center, inner sep=0}, "{\Syn^{\Hcal}}", curve={height=-12pt}, from=1-1, to=1-3]
	\arrow[""{name=1, anchor=center, inner sep=0}, "\Cl^{\Hcal}", curve={height=-12pt}, from=1-3, to=1-1]
	\arrow["\dashv"{anchor=center, rotate=-90}, draw=none, from=0, to=1]
\end{tikzcd}\]
\looseness=-1 A priori, a less ambitious task to investigate is: \textit{when is this adjunction idempotent?} Of course, this is true under the assumption of conceptual completeness, since reflections are indeed idempotent. It turns out that logics $\Hcal$ such that the adjunction $\Syn^{\Hcal}\dashv \Cl^{\Hcal}$ is idempotent are characterised by a distinctive proof-theoretic behaviour, which we shall refer to as being \textit{conservatively embedded} in geometric logic.

We single out the class of (bounded) logics $\Hcal$ such that the adjunction $\Syn^{\Hcal}\dashv \Cl^{\Hcal}$ is idempotent, which we will define as being \textit{conservatively embedded}. To justify our choice of terminology, we introduce the \emph{$\Hcal$-theory} of a topos formally in $\Hcal$: this will allow us to give a proof-theoretic interpretation to the definition of conservatively embedded logic.

There are many equivalent ways to characterise idempotent adjunctions. Here we focus on establishing when the component of the unit of the adjunction $\Syn^{\Hcal}\dashv \Cl^{\Hcal}$, at the classifying topos $\Cl^{\Hcal}(\mc A)$ of a small $\T^{\Hcal}$-algebra, is an equivalence of topoi:
\[p_{\Cl^{\Hcal}(\mc A)} \colon \Cl^{\Hcal}(\mc A) \to \Cl^{\Hcal}\Syn^{\Hcal}\Cl^{\Hcal}(\mc A).\] 
We shall start from a preliminary observation.

\begin{lem}\label{lem:unit-is-embedding}
    Let $\Hcal$ be a bounded logic. For each small $\T^{\Hcal}$-algebra $\mc A$, the geometric morphism $p_{\Cl^{\Hcal}(\mc A)} \colon \Cl^{\Hcal}(\mc A) \to \Cl^{\Hcal}\Syn^{\Hcal}\Cl^{\Hcal}(\mc A)$ is an embedding.
\end{lem}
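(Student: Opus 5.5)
Write $\mc X \coloneqq \Cl^{\Hcal}(\mc A)$ and $\mc B \coloneqq \Syn^{\Hcal}(\mc X)$. I would first make the unit $p_{\mc X} \colon \mc X \to \Cl^{\Hcal}(\mc B)$ explicit. By the Diaconescu equivalence (\Cref{thm:diaconescu}) and the description of the adjunction in \Cref{prop:adjunction-syn-cl}, $p_{\mc X}$ is the geometric morphism corresponding to the $\T^{\Hcal}$-algebra morphism given by the inclusion $i \colon \mc B \hook \mc X$. Its inverse image therefore sends a representable $\yo_y$, for $y \in \mc B$, to $y$ itself; here I use that $J^{\Hcal}_{\mc B}$ is subcanonical.

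The key step is to compose $p_{\mc X}$ with the $\Hcal$-presentation $\Cl^{\Hcal}(\mc B) \hook \Psh(\mc B)$ of \Cref{rem:classifying-topoi-have-presentations}. As in the first bullet of the proof of \Cref{prop:adjunction-syn-cl}, this composite is the morphism $a_{\mc X} \colon \mc X \to \Psh(\Syn^{\Hcal}(\mc X))$ from \Cref{prop:syntactic-categories-are-algebras}, since both inverse images act as $i$ on representables. We know $a_{\mc X}$ factors through its image $\Sh(\mc B, D^{\Hcal}_{\mc X})$ via the surjection $q_{\mc X}$. So it suffices to show that $a_{\mc X}$ is itself an embedding. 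Since embeddings are right-cancellable against embeddings (if $j\circ p$ is an embedding and $j$ is an embedding, then $p$ is one), it then follows that $p_{\mc X}$ is an embedding.

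To see that $a_{\mc X}$ is an embedding, I would show that $q_{\mc X}$ is an equivalence. Equivalently, $\mc B$ must be a generating subcategory of $\mc X$, so that by the comparison lemma (\cite[C2.2.3]{johnstoneSketchesElephantTopos2002}, adapted to possibly-large bases as in \Cref{prop:syntactic-categories-are-algebras}) $\mc X \simeq \Sh(\mc B, D^{\Hcal}_{\mc X})$. Generation holds because the representables $\yo_a$, for $a \in \mc A$, generate $\mc X = \Sh(\mc A, J^{\Hcal}_{\mc A})$. Moreover, the counit $i_{\mc A} \colon \mc A \hook \Syn^{\Hcal}(\Cl^{\Hcal}(\mc A)) = \mc B$ shows that these representables lie in $\mc B$; this was proven in the second bullet of \Cref{prop:adjunction-syn-cl}. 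Concretely, $a_{\mc X}^*$ restricted along $\mc A \hook \mc B$ recovers the sheafification $\Psh(\mc A) \to \mc X$, so $a_{\mc X}^*$ is essentially surjective up to colimits. Equivalently, $(a_{\mc X})_*$ is fully faithful: it sends $E$ to $\mc X(i(-),E)$, and full faithfulness follows from density of $\mc B$ in $\mc X$, which contains the dense $\mc A$.

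The main obstacle is the size issue and checking that the factorisation identification $j \circ p_{\mc X} \cong a_{\mc X}$ is compatible with the topologies. This needs care because $J^{\Hcal}_{\mc B}$ may a priori be smaller than $D^{\Hcal}_{\mc X}$. However, the argument above avoids comparing the two topologies: full faithfulness of $(a_{\mc X})_*$ only uses density of $\mc B$. Boundedness of $\Hcal$ guarantees that $\mc B$ is small, so that the presheaf topos is a genuine topos.
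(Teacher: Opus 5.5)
Your proof is correct, but it takes a different route from the paper's.

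The paper's route is this. It takes the naturality square of the unit at the presentation $j\colon \Xcal\hook\Psh(\mc A)$, and uses that $p_{\Psh(\mc A)}$ is an equivalence because $\Cl^{\Hcal}(\T^{\Hcal}\mc A)\simeq\Psh(\mc A)$. From this it deduces that $\Cl^{\Hcal}\Syn^{\Hcal}(j)\circ p_{\Xcal}$ is an embedding, and then cancels.

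Your route is this. You post-compose with the presentation $\Cl^{\Hcal}(\mc B)\hook\Psh(\mc B)$ and recognise the composite as $a_{\Xcal}$. You then check directly that $(a_{\Xcal})_*=\Xcal(i(-),-)$ is fully faithful, because $\mc B$ contains the dense subcategory $\mc A$.

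The paper's version is shorter and purely formal. Yours needs the explicit description of $p_{\Xcal}$ via Diaconescu's equivalence, but in exchange you only ever cancel a genuine embedding. The paper's last step instead cancels $\Cl^{\Hcal}\Syn^{\Hcal}(j)$, which is not shown to be an embedding. The principle ``$g\circ f$ is an embedding $\Rightarrow$ $f$ is an embedding'' fails for arbitrary geometric morphisms $g$: a point $\Set\to\Psh(M)$ of a non-trivial monoid $M$, followed by $\Psh(M)\to\Set$, composes to the identity, yet the point is not an embedding. In fact, since $\Syn^{\Hcal}(j)\cong i_{\mc A}\circ a$, the paper's square is just the triangle identity $\Cl^{\Hcal}(i_{\mc A})\circ p_{\Xcal}\cong 1$ post-composed with $p_{\Psh(\mc A)}\circ j$. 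By itself, that only gives $p_{\Xcal}$ a retraction. The density of $\mc B$ in $\Xcal$, which you use, is exactly the input that makes the conclusion go through.

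Your argument also gives something extra: $q_{\Xcal}$ is an equivalence, i.e.\ $\Xcal\simeq\Sh(\mc B,D^{\Hcal}_{\Xcal})$. So $p_{\Xcal}$ is identified with the subtopos inclusion induced by $J^{\Hcal}_{\mc B}\subseteq D^{\Hcal}_{\Xcal}$, which is the picture in the construction preceding \Cref{prop:conservatively-embedded-equivalent-defs}.
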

\begin{proof}
    For ease of notation, let $\Xcal \coloneqq \Cl^{\Hcal}(\mc A)$. Consider the naturality square of the unit $p \colon 1 \Rightarrow \Cl^{\Hcal} \circ \Syn^{\Hcal}$ with respect to the embedding $j\colon \Xcal \inj \Psh(\mc A)$:
\[\begin{tikzcd}
    {\Xcal} & {\Cl^{\Hcal}  \Syn^{\Hcal}(\Xcal)} \\
	{\Psh (\mc A)} & {\Cl^{\Hcal} \Syn^{\Hcal}\Psh (\mc A)}
	\arrow["{p_{\Xcal}}", from=1-1, to=1-2]
	\arrow["{j}"', hook, from=1-1, to=2-1]
	\arrow["{\Cl^{\Hcal}\Syn^{\Hcal}(j)}", from=1-2, to=2-2]
	\arrow["{p_{\Psh(\mc A)}}"', from=2-1, to=2-2]
\end{tikzcd}\]

Note first that the morphism $p_{\Psh(\mc A)} \colon \Psh (\mc A) \to \Cl^{\Hcal} \Syn^{\Hcal}\Psh (\mc A)$ is an equivalence. Indeed, recall that by definition $\T^{\Hcal}(\mc A) = \Syn^{\Hcal}(\Psh(\mc A))$; then, 
by Diaconescu's equivalence (\Cref{thm:diaconescu}) we have that, for each topos $\mc E$:
\[ \Topoi(\mc E,\Cl^{\Hcal}(\ms T^{\mc H}\mc A)) \simeq \Alg(\ms T^{\mc H})(\ms T^{\mc H}\mc A,\mc E) \simeq \LEX(\mc A,\mc E) \simeq \Topoi(\mc E,\psh(\mc A)), \]
which implies that $\Cl^{\Hcal}(\ms T^{\mc H}(\mc A)) \simeq \psh(\mc A)$.\footnote{In particular, the equivalence is realised by $p_{\Psh(\mc A)}$ specifically, since the adjunction $\Syn^{\Hcal}\dashv \Cl^{\Hcal}$ is derived by Diaconescu's equivalence.} Therefore, since  $j$ is an embedding, the diagonal of the above square is an embedding too, and hence so is $p_{\Xcal}$.
\end{proof}
\begin{rem}
    More generally, the unit $p_{\Xcal} \colon \Xcal \to \Cl^{\Hcal}\Syn^{\Hcal}(\Xcal)$ is an embedding for every topos $\Xcal$ admitting an $\Hcal$-presentation (see \Cref{rem:classifying-topoi-have-presentations}).
\end{rem}

\begin{defn}[Conservatively embedded logics]\label{def:cons-emb-logic}
    A bounded logic $\Hcal$ is \emph{conservatively embedded} if, for every small $\T^{\Hcal}$-algebra $\mc A$, the unit $p_{\Cl^{\Hcal}(\mc A)} \colon \Cl^{\Hcal}(\mc A) \to \Cl^{\Hcal}\Syn^{\Hcal}\Cl^{\Hcal}(\mc A)$ of $\Syn^{\Hcal}\dashv \Cl^{\Hcal}$ at $\Cl^{\Hcal}(\mc A)$ is a geometric surjection, and hence an equivalence by \Cref{lem:unit-is-embedding}.
\end{defn}

\begin{constr}[A site interpretation of conservative embedding]
In the next proposition we shall give a characterisation of conservatively embedded logics. In order to familiarise with the notion, consider the geometric morphism $a_{\Cl^{\Hcal}(\mc A)}$ of \Cref{prop:syntactic-categories-are-algebras} and note that we can factor it in two ways, as depicted in the following diagram.
    \[\begin{tikzcd}[row sep = small, column sep =35pt]
	& {\Sh(\Syn^{\mathcal H}(\Cl^{\Hcal} \mc A), D^{\mathcal H}_{\Cl^{\Hcal}(\mc A)})} & \\
	{\Cl^{\Hcal}(\mc A)} && {\Psh(\Syn^{\Hcal}(\Cl^{\Hcal} \mc A ))} \\
	& {\Cl^{\Hcal}(\Syn^{\Hcal}(\Cl^{\Hcal} \mc A ))}
	\arrow[bend left=15, hook, from=1-2, to=2-3]
	\arrow["{{{q_{\Cl^{\Hcal}(\mc A)}}}}", bend left=15, two heads, from=2-1, to=1-2]
	\arrow["{{a_{\Cl^{\Hcal}(\mc A)}}}"{description}, dashed, from=2-1, to=2-3]
	\arrow["{{p_{\Cl^{\Hcal}(\mc A)}}}"', bend right=13, from=2-1, to=3-2]
	\arrow[bend right=11, hook, from=3-2, to=2-3]
\end{tikzcd}\]

By the universal property of the surjection-embedding factorisation, we obtain a geometric morphism $\Sh(\Syn^{\Hcal}(\Cl^{\Hcal}\mc A), D^{\Hcal}_{\Cl^{\Hcal}(\mc A)}) \to \Cl^{\Hcal}(\Syn^{\Hcal}(\Cl^{\Hcal}\mc A))$. By direct inspection, we see that this morphism is induced by a morphism of sites based on the identity functor,
\[t_{\mc A}\colon (\Syn^{\Hcal}(\Cl^{\Hcal} \mc A), J^{\Hcal}_{\Syn^{\Hcal}(\Cl^{\Hcal} \mc A)}) \to  (\Syn^{\Hcal}(\Cl^{\Hcal} \mc A),  D^{\Hcal}_{\Cl^{\Hcal}(\mc A)}).\]
The fact that $t_{\mc A}$ is a morphism of sites corresponds to the fact that the two defining topologies satisfy $J^{\Hcal}_{\Syn^{\Hcal}(\Cl^{\Hcal} \mc A)} \subseteq D^{\Hcal}_{\Cl^{\Hcal}(\mc A)}$. 
\end{constr}

\begin{prop}[Characterisations of conservative emdeddedness]\label{prop:conservatively-embedded-equivalent-defs}
    For a bounded logic $\Hcal$, the following are equivalent:
    \begin{enumerate}
        \item $\Hcal$ is conservatively embedded;
        \item the adjunction $\Syn^{\Hcal} \dashv \Cl^{\Hcal}$ is idempotent;
        \item $J^{\Hcal}_{\Syn^{\Hcal}(\Cl^{\Hcal} \mc A)} = D^{\Hcal}_{\Cl^{\Hcal}(\mc A)}$ for each small $\T^{\Hcal}$-algebra $\mc A$.
    \end{enumerate}
\end{prop}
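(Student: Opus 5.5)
I would prove the cycle $(1)\Leftrightarrow(2)$ and $(1)\Leftrightarrow(3)$, using the standard characterisation of idempotent adjunctions together with the construction of $t_{\mc A}$ above.

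For $(1)\Leftrightarrow(2)$, recall that an adjunction $\Syn^{\Hcal}\dashv\Cl^{\Hcal}$ is idempotent if and only if the unit is invertible at every object in the essential image of the right adjoint, i.e.\ $p_{\Cl^{\Hcal}(\mc B)}$ is an equivalence for every $\T^{\Hcal}$-algebra $\mc B$. Equivalently, $\Cl^{\Hcal}\varepsilon$ is invertible. Here, algebras range over small $\T^{\Hcal}$-algebras; this is legitimate since $\Hcal$ is bounded and $\Syn^{\Hcal}$ lands in small algebras. By \Cref{lem:unit-is-embedding}, each such $p_{\Cl^{\Hcal}(\mc A)}$ is already an embedding. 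Hence it is an equivalence if and only if it is a surjection, which is exactly \Cref{def:cons-emb-logic}. I would state the 2-categorical version carefully: idempotency means that the whiskered units and counits are equivalences. The triangle identity $\Cl^{\Hcal}\varepsilon\circ p\Cl^{\Hcal}\cong\id$ then shows that invertibility of $p\Cl^{\Hcal}$ is equivalent to invertibility of $\Cl^{\Hcal}\varepsilon$.

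For $(1)\Leftrightarrow(3)$, write $\mc X\coloneqq\Cl^{\Hcal}(\mc A)$ and $\mc S\coloneqq\Syn^{\Hcal}(\mc X)$. Both $\Sh(\mc S,D^{\Hcal}_{\mc X})$ and $\Cl^{\Hcal}(\mc S)=\Sh(\mc S,J^{\Hcal}_{\mc S})$ are subtoposes of $\Psh(\mc S)$. The morphism induced by $t_{\mc A}$ is the inclusion of the former into the latter, corresponding to $J^{\Hcal}_{\mc S}\subseteq D^{\Hcal}_{\mc X}$. By the construction, $p_{\mc X}$ factors as
\[ \mc X \xrightarrow{q_{\mc X}} \Sh(\mc S,D^{\Hcal}_{\mc X}) \hookrightarrow \Cl^{\Hcal}(\mc S), \]
where $q_{\mc X}$ is the surjective part of the image factorisation of $a_{\mc X}$. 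By uniqueness of surjection–embedding factorisations, $p_{\mc X}$ is a surjection if and only if the embedding $\Sh(\mc S,D^{\Hcal}_{\mc X})\hookrightarrow\Cl^{\Hcal}(\mc S)$ is an equivalence. Since subtoposes of $\Psh(\mc S)$ correspond bijectively to topologies on $\mc S$, this holds if and only if $J^{\Hcal}_{\mc S}=D^{\Hcal}_{\mc X}$. (In fact, since $p_{\mc X}$ is an embedding, $q_{\mc X}$ is then an equivalence as well.)

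The main obstacle is justifying the inclusion $J^{\Hcal}_{\mc S}\subseteq D^{\Hcal}_{\mc X}$ and the factorisation of $p_{\mc X}$ through it, both only sketched ``by direct inspection'' in the construction. To address this, I would check on representables that $p_{\mc X}$ composed with $\Cl^{\Hcal}(\mc S)\hookrightarrow\Psh(\mc S)$ equals $a_{\mc X}$. Both inverse images send $\yo_y$ to $y$: for $a_{\mc X}$ by definition, and for $p_{\mc X}$ via Diaconescu's equivalence applied to the inclusion $\mc S\hookrightarrow\mc X$, which is a $\T^{\Hcal}$-algebra map by \Cref{prop:syntactic-categories-are-algebras}. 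Given this identity, the comparison morphism exists by orthogonality of surjections against embeddings, and it is automatically an embedding between subtoposes of $\Psh(\mc S)$ commuting with the inclusions. This yields the topology inclusion, since the sheaf subtopos is smaller exactly when the topology is larger. Size issues (a possibly large site for $D^{\Hcal}_{\mc X}$) do not arise, because $\mc S$ is small by boundedness.
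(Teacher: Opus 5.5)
Your proposal is correct and follows essentially the same route as the paper: $(1)\Leftrightarrow(2)$ via the unit being an embedding (\Cref{lem:unit-is-embedding}) plus the standard characterisation of idempotent adjunctions, and $(1)\Leftrightarrow(3)$ via uniqueness of the surjection--embedding factorisation of $p_{\Cl^{\Hcal}(\mc A)}$ and the bijection between topologies on the small category $\Syn^{\Hcal}(\Cl^{\Hcal}\mc A)$ and subtoposes of its presheaf topos. Your check on representables that $p_{\mc X}$ followed by $\Cl^{\Hcal}(\mc S)\hookrightarrow\Psh(\mc S)$ recovers $a_{\mc X}$ is a useful filling-in of the step the paper leaves to ``direct inspection''.
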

\begin{proof}
    $(1) \Leftrightarrow (2)$ is clear. To see why $(1)\Leftrightarrow (3)$ holds, we go back to the diagram discussed in the previous construction for each small $\T^{\Hcal}$-algebra $\mc A$.
 Indeed, $p_{\Cl^{\Hcal}(\mc A)}$ is a surjection if and only if the canonical geometric morphism 
\[ \Sh(\Syn^{\Hcal}(\Cl^{\Hcal}\mc A), D^{\Hcal}_{\Cl^{\Hcal}(\mc A)} ) \to \Cl^{\Hcal}(\Syn^{\Hcal}(\Cl^{\Hcal} \mc A))\]
is an equivalence. Since this morphism is induced by the inclusion of topologies $J^{\Hcal}_{\Syn^{\Hcal}(\Cl^{\Hcal} \mc A)} \subseteq D^{\Hcal}_{\Cl^{\Hcal}(\mc A)}$, it is an equivalence if and only if the two topologies coincide.
\end{proof}

In order to unveil the logical meaning of \Cref{def:cons-emb-logic} and explain our choice of terminology, we now introduce some explicit \emph{syntax}: for the following definitions, fix a bounded logic $\Hcal$ and a topos $\Xcal$ formally in $\Hcal$. First, we introduce a geometric theory canonically associated to $\Xcal$ which encodes the information of the topology $D^{\Hcal}_{\Xcal}$ on $\Syn^{\Hcal}(\mc X)$.

\begin{defn}[$\mathcal{H}$-theory of a topos]\label{def:Htheory}
    We define the \emph{$\mc H$-theory} of $\mc X$ as the geometric theory $\mathbb{T}^{\mathcal{H}}_{\mc X}$ of signature $\Sigma^{\Hcal}_{\Xcal}$ constructed as follows.
    \begin{enumerate}
        \item The signature $\Sigma^{\Hcal}_{\Xcal}$ consists of one sort $X$ for each object $X$ in $\Syn^{\mc H}(\mc X)$, one function symbol $f$ from sort $X$ to sort $Y$ for each arrow $f\colon X \to Y$ in $\Syn^{\mc H}(\mc X)$, and one relation symbol $R_\phi$ of sort $X$ for each subobject $\phi \in \Sub_{\Xcal}(X)$.

        \item\looseness=-1 The theory $\mathbb{T}^{\mathcal{H}}_{\mc X}$ extends the canonical cartesian theory associated to the category $\Syn^{\mc H}(\mc X)$ as in~\cite[Ex.\ D1.4.8]{johnstoneSketchesElephantTopos2002}, i.e.\ the theory axiomatizing the finite-limits structure of $\Syn^{\mc H}(\mc X)$, with the following axioms:
        \begin{enumerate}[label={--}]
            \item $R_{\Delta_X}(x,x') \dashv\vdash_{x,x':X} x = x'$ for each diagonal $\Delta_X \colon X \inj X \times X$;
            \item $R_\phi(x) \vdash_{x:X} R_\psi(x)$ for each $\phi,\psi \in \Sub_{\mc X}(X)$ such that $\phi\leq \psi$;
            \item $\bigwedge_{i\le n}R_{\phi_i}(x) \vdash_{x:X} R_{\bigwedge_{i\le n}\phi_i}(x)$ for each finite family $\stt{\phi_i}_{i\le n}$ in $\Sub_{\mc X}(X)$; 
            \item $R_{\bigvee_{i\in I}\phi_i}(x) \vdash_{x:X} \bigvee_{i\in I}R_{\phi_i}(x)$ for each family $\stt{\phi_i}_{i\in I}$ in $\Sub_{\mc X}(X)$.
            \item $R_\phi(f(x)) \dashv\vdash_{x:X} R_{f^*\phi}(x)$ for each $f \colon X \to Y$ and each $\phi\in\Sub_{\mc X}(Y)$;
            \item $\exists x\!:\!X.fx = y \wedge R_{\phi}(y) \dashv\vdash_{y:Y} R_{\exists_f\phi}(y)$ for each $f \colon X \to Y$ and each $\phi\in\Sub_{\mc X}(Y)$.
        \end{enumerate}
    \end{enumerate}
\end{defn}

\begin{rem}[$\mathbb T^{\Hcal}_{\Xcal}$ encodes $D^{\Hcal}_{\Xcal}$]
    In particular, under this definition, a family of maps $\set{f_i\colon X_i \to X}_{i\in I}$ in $\Syn^{\mc H}(\mc X)$ is a covering family in $\mc X$ if and only if the following sequent is provable in $\mbb T^{\mc H}_{\mc X}$:
\[ \top \vdash_{x:X} \bigvee_{i\in I}\exists x_i : X_i \, .\, f_i(x_i) = x. \]
\end{rem}

\begin{rem}[The geometric syntactic category of $\mbb T^{\mc H}_{\mc X}$]
    It follows from~\cref{def:Htheory} that the syntactic category of $\mbb T^{\mc H}_{\mc X}$ as a geometric theory (cf.~\cite[\S 1.4]{caramelloTheoriesSitesToposes2018}) is equivalent to the full subcategory of $\mc X$ whose objects are subobjects of some $X\in\Syn^{\mc H}(\mc X)$. If $\Syn^{\mc H}(\mc X)$ is \emph{generating} in $\mc X$, then each relation in $\mc X$ on $X\in\Syn^{\mc H}(\mc X)$ can be written as a union of images of maps in $\Syn^{\mc H}(\mc X)$, thus the relation symbols added in~\cref{def:Htheory} will be redundant. In this case the classifying topos of $\mbb T^{\mc H}_{\mc X}$ will be $\mc X$ itself.
\end{rem}

Completely similarly, we also introduce a theory which encodes the information of the topology $J^{\Hcal}_{\Syn^{\Hcal}(\Xcal)}$ on $\Syn^{\Hcal}(\Xcal)$.

\begin{defn}[$\mc H$-theory of a syntactic category]
We define the \emph{$\Hcal$-theory} of $\Syn^{\Hcal}(\mc X)$ as the geometric theory $\mathbb T ^{\Hcal}_{\Syn^{\Hcal}(\mc X)}$ constructed as follows.
\begin{enumerate}
    \item The signature $\Sigma^{\Hcal}_{\Syn^{\Hcal}(\mc X)}$ consists of one sort $X$ for each object $X$ in $\Syn^{\Hcal}(\mc X)$ and one function symbol $f$ from $X$ to sort $Y$ for each arrow $f \colon X \to Y$ in $\Syn^{\Hcal}(\Xcal)$.
    \item\looseness=-1 The theory $\mathbb{T}^{\mathcal{H}}_{\Syn^{\Hcal}(\mc X)}$ extends the canonical cartesian theory associated to the category $\Syn^{\mc H}(\mc X)$ with the axiom
    \[ \top \vdash_{x:X} \bigvee_{i\in I}\exists x_i : X_i \, .\, f_i(x_i) = x.\]
    for each $J^{\Hcal}_{\Syn^{\Hcal}(\mc X)}$-covering $\set{f_i \colon X_i \to X}_{i\in I}$ of each $X$ in $\Syn^{\Hcal}(\Xcal)$.
\end{enumerate}
\end{defn}

\begin{rem}[$\mbb T^{\mc H}_{\mc X}$ vs.\ $\mbb T^{\mc H}_{\Syn^{\mc H}(\mc X)}$]
In essence, the difference between $\mbb T^{\mc H}_{\mc X}$ and $\mbb T^{\mc H}_{\Syn^{\mc H}(\mc X)}$ --- besides the additional axioms --- is that the former contains many more relation symbols. This is due to the fact that $\mbb T^{\mc H}_{\mc X}$ is a restriction of the canonical geometric theory of the topos $\mc X$ to the sorts lying in $\Syn^{\Hcal}(\mc X)$, thus we are canonically including all the subobjects that can appear there.
\end{rem}

Finally, note that by construction there is an evident interpretation $I_{\mc X} \colon \mbb T^{\mc H}_{\Syn^{\mc H}(\mc X)} \to \mbb T^{\mc H}_{\mc X}$, realised by the identity map on sorts and function symbols. The fact that the two topologies coincide is therefore equivalent to this interpretation being \emph{conservative}, i.e.\ such that geometric sequents in $\Sigma^{\Hcal}_{\Syn^{\Hcal}(\Xcal)} \subseteq \Sigma^{\Hcal}_{\Xcal}$ that are provable in $\mathbb T^{\Hcal}_{\Xcal}$ are already provable in $\mbb T^{\mc H}_{\Syn^{\mc H}(\mc X)}$. This gives the following proof-theoretic interpretation of conservative embeddedness, and justifies its name.

\begin{prop}[Conservatively embedded logics, syntactically]\label{prop:ce-logics-via-syntax}
   For a bounded logic $\Hcal$, the following are equivalent:
   \begin{enumerate}
   \item $\Hcal$ is conservatively embedded;
   \item for each small $\T^{\Hcal}$-algebra $\mc A$, the interpretation $I_{\Cl^{\Hcal}(\mc A)} \colon \mbb T^{\mc H}_{\Syn^{\mc H}(\Cl^{\Hcal} \mc A)} \to \mbb T^{\mc H}_{\Cl^{\Hcal} \mc A}$ is conservative.
   \end{enumerate}
    \begin{proof}
        Combine \Cref{prop:conservatively-embedded-equivalent-defs} with \cite[Cor.\ 6.2]{caramelloDensenessConditionsMorphisms2020}.
    \end{proof}
\end{prop}

\section{It was Makkai all along}\label{sec:makkai-all-along}

We conclude our analysis by discussing how our notion of a conceptually complete logic $\Hcal$ relates to Makkai's original conceptual completeness theorem for coherent logic \cite{makkaiStoneDualityFirst1987}, which we recall here. 

\begin{thm}[Makkai's conceptual completeness]\label{thm:makkai}
For each small pretopos $\mc C$, the category $\mathsf{Mod}(\mc C)$ of pretopos functors $\mc C \to \Set$ is an \emph{ultracategory}, and the category of \emph{ultrafunctors} $\mathsf{Mod}(\mc C) \to \Set$ is equivalent to $\mc C$ itself.
\end{thm}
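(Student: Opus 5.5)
The plan is to deduce \Cref{thm:makkai} from our conceptual completeness of coherent logic (\Cref{flatcc}), together with completeness of coherent logic with respect to set-based models (Deligne's theorem). Fix a small pretopos $\mc C$ and set $\Xcal \coloneqq \Sh(\mc C, J_{\mr{coh}})$. By the classical Diaconescu theorem, $\mathsf{Mod}(\mc C) \simeq \Topoi(\Set, \Xcal)$ is the category of points of $\Xcal$. By \Cref{flatcc} and \Cref{cor:cc-as-every-algebra-is-syntactic-category}, we already know that $\mc C \simeq \Syn^{\Hcal_\beta}(\Xcal) = \WRInj(\Hcal_\beta)(\Xcal, \Set[\Obb])$. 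It therefore suffices to identify ultrafunctors $\mathsf{Mod}(\mc C) \to \Set$ with morphisms of $\Hcal_\beta$-injectives $\Xcal \to \Set[\Obb]$.

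First, I would recall from \cite{dilibertiGeometryCoherentTopoi2022} how the ultrastructure on $\mathsf{Mod}(\mc C)$ arises from the right Kan extensions along $\beta$-maps. Given a family of points $x \colon \Set^I \to \Xcal$ and an ultrafilter $\mu \in \beta I$, viewed as a point $\mu \colon \Set \to \Sh(\beta I)$, the ultraproduct is the point $(\ran_{i_I} x)\circ \mu$. Indeed, $\mu^* (i_I)_*$ is the ultraproduct functor $\int(-)\,d\mu$, which was already used in the proof of \Cref{prop:compact-objects-in-coherent-topos}. The ultramorphisms are obtained in the same way from the 2-cells of these Kan extensions. An object $X \in \Xcal$ induces an evaluation functor $\mathsf{ev}_X \colon x \mapsto x^*X$ on points. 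This functor carries canonical comparison maps $\mu^*(\ran_{i_I}x)^*X \to \mu^*(i_I)_*x^*X = \int x_i^*X\,d\mu$, which are exactly the stalks at $\mu$ of the comparison 2-cell $\sigma^X$ of \Cref{prop:compact-objects-in-coherent-topos}. The structure $(\mathsf{ev}_X, \sigma)$ is a left ultrafunctor, and it is an ultrafunctor precisely when every such stalk is invertible. Since $\Sh(\beta I)$ is spatial, and hence has enough points, this holds exactly when each $\sigma^X$ is invertible, that is, when $X$ lies in $\Syn^{\Hcal_\beta}(\Xcal)$.

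The remaining step, which I expect to be the main obstacle, is to show that every ultrafunctor $F \colon \mathsf{Mod}(\mc C) \to \Set$ is of the form $\mathsf{ev}_X$ for some $X \in \Xcal$. I would first show that the assignment $X \mapsto \mathsf{ev}_X$ is fully faithful; this follows because $\Xcal$ has enough points by Deligne's theorem, together with a check that morphisms of left ultrafunctors between evaluation functors are natural on the relevant generalised points. For essential surjectivity, I would follow Lurie's strategy \cite{lurieUltracategories2018}, which identifies $\Xcal$ with left ultrafunctors on its points. Concretely, I would form the canonical colimit of coherent objects $A$ equipped with maps $\mathsf{ev}_A \Rightarrow F$, and I would use the compatibility of $F$ with ultraproducts, via a compactness argument like the one in \Cref{prop:compact-objects-in-coherent-topos}, to show that the induced map to $F$ is invertible at every model. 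This reconstruction of an object from its stalks is the one step that genuinely uses set-based completeness, and it is where most of the work lies.

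Once these steps are in place, ultrafunctors $\mathsf{Mod}(\mc C)\to\Set$ correspond to $\Syn^{\Hcal_\beta}(\Xcal) \simeq \mc C$, which proves \Cref{thm:makkai}.
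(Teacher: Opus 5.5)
Your plan follows the paper's route. The syntactic half is $\mc C\simeq\Syn^{\Hcal_\beta}(\Xcal)$ from \Cref{flatcc}. The bridge is that ultrafunctors are exactly the virtual ultrafunctors preserving right Kan extensions along $\beta$-maps. The semantic half is the reconstruction of $\Xcal$ from (left) ultrafunctors on its points. Together these are the chain (i)--(iv) preceding \Cref{cor:recovering-makkai}. Your stalkwise bridge is correct: the stalk of $\sigma^X$ at $\mu\in\beta I$ is the comparison map of $\mathsf{ev}_X$ at the ultrafamily $(x_i)_{i:\mu}$, and $\Sh(\beta I)$ is spatial. It is a hands-on version of what the paper gets from \Cref{lem:ultracategories-are-injectives} and \Cref{prop:formal-equivalence-injectivity-vult}.

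The step you flag as the main obstacle is where the proposal needs fixing. The paper does not prove it; it imports \Cref{thm:main-vucats}. That theorem gives the counit $\mc O(\mathsf{pt}(\Xcal))\simeq\Xcal$ at a topos with enough points. Read through \Cref{rem:ultcats-as-vultcats} (virtual ultrafunctors between ultracategories are Lurie's left ultrafunctors), this says precisely that $X\mapsto\mathsf{ev}_X$ is an equivalence onto left ultrafunctors $\mathsf{Mod}(\mc C)\to\Set$. For coherent $\Xcal$ this is Lurie's theorem.

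You should cite it for fullness as well as for essential surjectivity. Deligne's theorem only gives faithfulness, and the ``check on generalised points'' you mention is a substantial part of that theorem. For example, for sheaves on Cantor space the category of points is discrete, so plain naturality allows arbitrary non-continuous stalkwise maps.

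Your concrete sketch of essential surjectivity does not work as stated. Nothing in it produces a transformation $\mathsf{ev}_A\Rightarrow F$ through a prescribed element $s\in F(M)$. Nor does it show that such transformations form a small diagram. The compactness argument of \Cref{prop:compact-objects-in-coherent-topos} uses that $X$ is already an object of $\Xcal$ with a lattice of compact subobjects, which is exactly what an abstract $F$ lacks. Once the reconstruction is cited rather than re-derived, your argument is complete and agrees with the paper's.
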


The aim of this section is twofold. On one hand, we will show that conceptual completeness for the logic $\mc H_\beta$ in the sense of \Cref{def:cc-logic} implies, and is in fact equivalent to, Makkai's result (\Cref{cor:recovering-makkai}). On the other, we will give a general definition of \emph{conceptual completeness à la Makkai} (\Cref{def:cc-à-la-makkai}) which is more faithful to its traditional \textit{semantic} taste, and we will prove that it is equivalent to \Cref{def:cc-logic} under the assumption of completeness with respect to set-based models. Towards this aim, we shall start by analysing the three independent works~\cite{saadiaExtendingConceptualCompleteness2025, hamadGeneralisedUltracategoriesConceptual2025, vangoolToposesEnoughPoints2026}; before recalling the relevant definitions in~\cref{virtualultracat}, let us explain our perspective in this section.

The main result of \emph{ibid.} is that there is an idempotent 2-adjunction between the 2-category of toposes and that of \emph{(bounded) virtual ultracategories}\footnote{Strictly speaking, the notion appearing in \cite{hamadGeneralisedUltracategoriesConceptual2025} is slightly more general than the virtual ultracategories of \cite{saadiaExtendingConceptualCompleteness2025}, appearing in \cite{vangoolToposesEnoughPoints2026} under the name of \emph{ultraconvergence spaces}, cf.\ the discussion in \cite[\S 6.1]{aristoteProfunctorialAlgebras2026}.} and \emph{virtual ultrafunctors}, realised by homming into $\Set$, which restricts to a \emph{reflection} on \emph{toposes with enough points}. In analogy with the topological case, we dub \emph{sober} those virtual ultracategories in the essential image of the right adjoint.
\[\begin{tikzcd}
	\Topoi && {\mathsf{vUlt^{bnd}}} \\
	\\
	{\Topoi_{\mathsf{wep}}} && {\mathsf{vUlt^{bnd}_{sob}}}
	\arrow[""{name=0, anchor=center, inner sep=0}, "{{{{\mathsf{pt}}}}}"', curve={height=12pt}, from=1-1, to=1-3]
	\arrow[""{name=1, anchor=center, inner sep=0}, "{{{{\Ocal}}}}"', curve={height=12pt}, from=1-3, to=1-1]
	\arrow[hook, from=3-1, to=1-1]
	\arrow[""{name=2, anchor=center, inner sep=0}, "{{{{\mathsf{pt}}}}}"', curve={height=12pt}, hook, from=3-1, to=3-3]
	\arrow[hook, from=3-3, to=1-3]
	\arrow[""{name=3, anchor=center, inner sep=0}, "{{{{\Ocal}}}}"', curve={height=12pt}, from=3-3, to=3-1]
	\arrow["\dashv"{anchor=center, rotate=-90}, draw=none, from=1, to=0]
	\arrow["\simeq"{description}, draw=none, from=3, to=2]
\end{tikzcd}\]
This result can be intended as a form of (strong) conceptual completeness for geometric theories with enough models, provided that we identify the notion of `syntax', for geometric logic, with the topoi classifying geometric theories. 

For subgeometric logics, however, this identification is arguably less natural, as it is subsumes a `geometric completion' of the category naturally built out of the syntax of a theory to its classifying topos. Indeed, mapping a pretopos $\mc C$ to its classifying topos $\mathsf{Sh}(\mc C, J_{\text{coh}})$,
we are \textit{de facto} moving from its coherent theory, made of coherent formulas and sequents, to its full geometric theory, which has many more formulas and sequents.

As a consequence, for instance, the analogous reflection at the level of coherent topoi originally proved by Lurie \cite{lurieUltracategories2018} strays further from Makkai's original meaning of conceptual completeness, which lives entirely within coherent logic by identifying syntactic categories of coherent theories with pretopoi. Our point of view on the above adjunction, in this paper, is to conceive it as a categorification of the classical Isbell duality between locales and topological spaces, from which we can deduce analogous adjunctions for subgeometric logics. 

\begin{rem}[Other approaches to a categorified Isbell duality]
   In this sense, virtual ultracategories represent an \textit{arena of formal model theory} in the sense of \cite{dilibertiFormalModelTheory2024} which, as of today, beautifully brings the \textit{spirit} of point-set topology into the treatment of model-theoretical matters. Moreover, compared to earlier approaches to an Isbell-like duality, such as via \emph{bounded ionads} \cite{dilibertiHigherTopology2022} or \emph{topological profiles} \cite{dilibertiGeometryCoherentTopoi2022}, the environment of virtual ultracategories is more mature for conceptual-completeness-like results. While the formalism of bounded ionads delivers an analogous idempotent adjunction, it does not single out a notion of \emph{closure under ultraproducts} as natively (cf.\ \emph{ultraionad} \cite[Def.\ 3.3.4]{dilibertiGeometryCoherentTopoi2022}). On the other hand, the framework of topological profiles does capture closure under ultraproducts through the notion of \emph{ultraprofiles} \cite[Def.\ 3.2.9]{dilibertiGeometryCoherentTopoi2022}, based on Marmolejo's \emph{\L o\'s categories} \cite{marmolejoUltraproductsContinuousFamilies1995}, but it fails in establishing the idempotency of the analogous adjunction. 
\end{rem}

Focusing on the coherent case, our strategy in this section will be to decompose Makkai's result into a more \emph{syntactic} component, handling definability of geometric concepts in coherent logic, and a more \emph{semantic} component, transforming semantic prescriptions into formulas. Concretely, our starting point will be to realise \Cref{thm:makkai} as a reflective dual adjunction, of the 2-category $\ms{Pretopoi}$ of pretopoi and pretopos functors into an appropriate 2-category of ultracategories and ultrafunctors, singled out among virtual ultracategories and virtual ultrafunctors:
\[\begin{tikzcd}
	{\ms{Pretopoi}\op } && {\mathsf{Ult^{bnd}_{sob}}}
	\arrow[""{name=0, anchor=center, inner sep=0}, "{\ms{Mod}}"', curve={height=12pt}, hook, from=1-1, to=1-3]
	\arrow[""{name=1, anchor=center, inner sep=0}, "{\ms{Ult}(-,\Set)}"', curve={height=12pt}, from=1-3, to=1-1]
	\arrow["\dashv"{anchor=center, rotate=-90}, draw=none, from=1, to=0]
\end{tikzcd}\]
Then, denoting by $\WRInj(\mc H_\beta)_{\ms{wep}}$ the full subcategory of $\WRInj(\mc H_\beta)$ spanned by topoi with enough points, we will show that the above adjunction is a reflection if and only if so is another composite adjunction, depicted below:
\[\begin{tikzcd}
	{\mathsf{Pretopoi}\op} & {\WRInj(\Hcal_{\beta})_{\ms{wep}}} & {\WRInj(\mc M_\beta)^{\ms{bnd}}_{\ms{sob}}}
	\arrow[""{name=0, anchor=center, inner sep=0}, "{{\Cl^{\Hcal_{\beta}}}}"', curve={height=18pt}, from=1-1, to=1-2]
	\arrow[""{name=1, anchor=center, inner sep=0}, "{{{{\Syn^{\Hcal_{\beta}}}}}}"', curve={height=18pt}, from=1-2, to=1-1]
	\arrow[""{name=2, anchor=center, inner sep=0}, "{{{{\mathsf{pt}}}}}"', curve={height=18pt}, from=1-2, to=1-3]
	\arrow[""{name=3, anchor=center, inner sep=0}, "{{{{\mc O}}}}"', curve={height=18pt}, from=1-3, to=1-2]
	\arrow["\dashv"{anchor=center, rotate=-90}, draw=none, from=1, to=0]
	\arrow["\simeq"{description}, draw=none, from=3, to=2]
\end{tikzcd}\]
In this picture, the equivalence on the right will follow from a formal analysis of \emph{semantic prescriptions} in the framework of virtual ultracategories (\cref{semanticprescription}). Thus, the composite adjunction will be a reflection if and only if so is the one on the left, and hence if and only if  $\Hcal_\beta$ is conceptually complete. Summing up, conceptual completeness of $\Hcal_\beta$ in the sense of \Cref{def:cc-logic} will be equivalent to conceptual completeness of coherent logic in Makkai's sense, which is thus dissected into a purely syntactic component and an equivalence induced by completeness with respect to set-based models. 

\begin{rem}[There are no size issues]
Formally, we will adhere to the axiomatisation of ultracategories of \cite{saadiaExtendingConceptualCompleteness2025, aristoteProfunctorialAlgebras2026}. This way, we can speak of a genuine reflection, of {small} pretopoi into bounded (sober) ultracategories, instead of a \emph{reflection in the small} in the sense of \cite[\S 8]{makkaiStoneDualityFirst1987}.
\end{rem}

\subsection{Recalls on virtual ultracategories}\label{virtualultracat}
For the sake of readability, we shall recall here the necessary background on the theory of virtual ultracategories: besides \cite{saadiaExtendingConceptualCompleteness2025, hamadGeneralisedUltracategoriesConceptual2025, vangoolToposesEnoughPoints2026}, we also refer the reader to \cite{aristoteProfunctorialAlgebras2026}, whose conventions we will follow for reasons of brevity. 

\begin{defn}
    For a category $\mc C$, an \emph{ultrafamily} in $\mc C$ is a triple $(I, c, \mu)$ of a set $I$, a functor $c \colon I \to \mc C$, and an ultrafilter $\nu \in \beta I$, also denoted as $(c_i)_{i:\mu}$. Ultrafamilies in $\mc C$ can be assembled into a category $\bbbeta \mc C$ such that the assignment $\mc C \mapsto \bbbeta \mc C$ yields the \emph{ultracompletion} pseudomonad $\bbbeta \colon\CAT \to \CAT$.
\end{defn}

\begin{defn}
    We define \emph{ultracategories}, \emph{ultrafunctors}, and \emph{transformations}, respectively as pseudo-$\bbbeta$-algebras, pseudomorphisms, and algebra 2-cells, and we denote by $\mathsf{Ult}$ their 2-category.
\end{defn}

\begin{defn}
    We define \emph{virtual ultracategories}, \emph{virtual ultrafunctors}, and \emph{transformations}, respectively as profunctorial $\bbbeta$-algebras, representable colax morphisms, and algebra 2-cells, and we denote by $\mathsf{vUlt}$ their 2-category. 
\end{defn}

Explicitly, a virtual ultracategory $\mathscr X$ is defined by a category $X_0$ and a profunctor $\Xi \colon \bbbeta X_0 \pro X_0$, where elements of $\Xi(x, (I,y,\mu))$ are called \emph{ultra-arrows} and denoted by $u \colon x \ult_{\mu} y_i$, endowed with:
\begin{enumerate}
    \item an \emph{identity} $\id_x \colon x \ult_{1} x$ for each $x\in X_0$, where $1$ is the unique ultrafilter on the one-element set;
    \item a \emph{composite} $(v_i)_{\mu}\circ u \colon x \ult_{\nu_i \otimes \mu} z_{i,j}$ for each ultra-arrow $u \colon v \ult_\mu y_i$ and each ultrafamily of ultra-arrows $(v_i \colon y_i \ult_{\nu_i} z_{i,j})_{i:\mu}$, where for $(\nu_i \in \beta J_i)_{i:\mu}$ we denote by $\nu_i \otimes \mu$ the ultrafilter on $\coprod_{i}J_i$ defined by 
    \[ S \in \nu_i \otimes \mu \Leftrightarrow \set{\, i \mid  S \cap J_i \in \nu_i \,} \in \mu .\]
\end{enumerate}
This data is required to satisfy appropriate axioms that we omit here. Virtual ultrafunctors, in these terms, are functors endowed with a suitably-natural action on ultra-arrows which preserves identities and composites.

\begin{rem}[Ultracategories as virtual ultracategories]\label{rem:ultcats-as-vultcats}
   By \cite[Cor.\ 5.2.6]{aristoteProfunctorialAlgebras2026}, we can characterise ultracategories as those virtual ultracategories $\mathscr{X}$ whose defining profunctor $\bbbeta X_0 \pro X_0$ is represented by a pseudo-$\bbbeta$-algebra $\bbbeta X_0 \to X_0$. Note, however, that the inclusion $\mathsf{Ult} \inj \mathsf{vUlt}$ is \emph{not} full: virtual ultrafunctors between ultracategories play the role of the \emph{left ultrafunctors} in \cite{lurieUltracategories2018}.
\end{rem}

\begin{exa}[Topological spaces are virtual ultracategories]
    Every topological space $X$ can be seen as a virtual ultracategory with exactly one ultra-arrow $x \ult_\mu y_i$ if and only if every open neighbourhood $U\subseteq X$ of $x$ satisfies $\set{\, i \mid y_i \in U\,} \in \mu$, and none otherwise. This construction yields a fully faithful embedding $\ms{TopSp} \hook \ms{vUlt^{bnd}}$.
\end{exa}

For every virtual ultracategory $\mathscr X$, the category $\mc O(\mathscr X) \coloneqq \mathsf{vUlt}(\mathscr X, \Set)$ is an infinity-pretopos: we say that $\mathscr X$ is \emph{bounded} if $\mc O(\mathscr X)$ admits a small generator, that is, if it is actually a topos. We denote by $\mathsf{vUlt^{bnd}}\subseteq \mathsf{vUlt}$ and $\mathsf{Ult^{bnd}}\subseteq \mathsf{Ult}$ the full sub-2-categories spanned by bounded virtual ultracategories and bounded ultracategories, respectively. The main result of \cite{saadiaExtendingConceptualCompleteness2025, hamadGeneralisedUltracategoriesConceptual2025, vangoolToposesEnoughPoints2026} can then be stated as follows.

\begin{thm}\label{thm:main-vucats}
    There is an idempotent 2-adjunction
\[\begin{tikzcd}
	\Topoi && {\mathsf{vUlt^{bnd}}}
	\arrow[""{name=0, anchor=center, inner sep=0}, "{\mathsf{pt}}"', curve={height=12pt}, from=1-1, to=1-3]
	\arrow[""{name=1, anchor=center, inner sep=0}, "{\Ocal}"', curve={height=12pt}, from=1-3, to=1-1]
	\arrow["\dashv"{anchor=center, rotate=-90}, draw=none, from=1, to=0]
\end{tikzcd}\]
whose counit, at a topos $\mc E$, is an equivalence if and only if $\mc E$ has enough points.
\end{thm}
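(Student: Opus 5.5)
We prove the theorem by constructing both 2-functors explicitly and pairing them through a single evaluation functor, following the strategy of \cite{saadiaExtendingConceptualCompleteness2025, hamadGeneralisedUltracategoriesConceptual2025, vangoolToposesEnoughPoints2026}.

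\textbf{Construction of $\mathsf{pt}$ and $\Ocal$.} For a topos $\mc E$, let $\mathsf{pt}(\mc E)$ have underlying category $\Topoi(\Set,\mc E)$. An ultra-arrow $x \ult_\mu y_i$ is a natural transformation
\[ x^* \Rightarrow \int y_i^*(-)\, d\mu \]
of functors $\mc E \to \Set$, where $\int(-)\,d\mu$ is the ultraproduct functor. Identities are the evident ones. Composition uses the canonical comparison maps between iterated ultraproducts and the ultraproduct indexed by $\nu_i\otimes\mu$. The virtual ultracategory axioms then reduce to coherence of these comparison maps, i.e.\ to the pseudomonad structure of $\bbbeta$.

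In the other direction, set $\Ocal(\mathscr X)\coloneqq \mathsf{vUlt}(\mathscr X,\Set)$. Here $\Set$ is an ultracategory via ultraproducts, hence a virtual ultracategory by \Cref{rem:ultcats-as-vultcats}. I would check that colimits and finite limits in $\Ocal(\mathscr X)$ are computed pointwise. This holds because ultraproducts commute with finite limits and with colimits in the colax sense required for representable colax morphisms. It follows that $\Ocal(\mathscr X)$ is an infinity-pretopos, and therefore a topos when $\mathscr X$ is bounded. Both constructions act on morphisms by pre- and post-composition.

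\textbf{The adjunction.} I would not compare unit and counit directly, but exhibit both hom-categories as equivalent to a common category of ``pairings''. For a topos $\mc E$ and a bounded $\mathscr X$, the key claim is
\[ \Topoi(\mc E,\Ocal(\mathscr X)) \simeq \mathsf{vUlt}(\mathscr X,\mathsf{pt}(\mc E)). \]
The left side is the category of cocontinuous lex functors $\mc E\to\mathsf{vUlt}(\mathscr X,\Set)$. Such a functor is the same as a functor $\mc E\times \mathscr X\to\Set$ that is cocontinuous and lex in the first variable and a virtual ultrafunctor in the second. Cocontinuity and lex in $\mc E$ is exactly a point $\mc E\to\Set$ for each element of $\mathscr X$. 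The ultrafunctoriality in $\mathscr X$ then corresponds precisely to the ultra-arrow action defined on $\mathsf{pt}(\mc E)$. This relies on the pointwise computation of colimits and limits in $\Ocal(\mathscr X)$ from the previous step, and on naturality in both variables.

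Under this equivalence, the counit at $\mc E$ is the geometric morphism
\[ \epsilon_{\mc E}\colon \Ocal\mathsf{pt}(\mc E) \to \mc E, \qquad \epsilon_{\mc E}^*F = (x\mapsto x^*F). \]

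\textbf{Enough points and idempotency.} If $\epsilon_{\mc E}$ is an equivalence then $\epsilon_{\mc E}^*$ is conservative, which says exactly that $\mc E$ has enough points. For the converse, the hard part, suppose $\mc E$ has enough points. Then $\epsilon_{\mc E}^*$ is conservative, so $\epsilon_{\mc E}$ is a surjection, and it remains to show it is an embedding. This is a definability statement: every virtual ultrafunctor $\mathsf{pt}(\mc E)\to\Set$ must arise, up to isomorphism, from an object of $\mc E$.

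My first attempt would present $\mc E=\Sh(\mc C,J)$. Given a virtual ultrafunctor $\Phi$, I would form the sheaf $\tilde\Phi$ whose elements over $c$ are compatible choices of elements of $\Phi(x)$ for points $x$ with $a\in x^*(c)$. I would then use ultra-arrows over enough families of points, namely ultraproducts of points witnessing non-membership, in the style of Lurie's argument in \Cref{prop:compact-objects-in-coherent-topos}, to show the comparison $\epsilon_{\mc E}^*\tilde\Phi\to\Phi$ is invertible. The delicate point is to avoid any compactness hypothesis. For this I would use that, for a virtual ultrafunctor, ultra-arrows only induce colax comparison maps rather than isomorphisms, which is exactly what allows arbitrary subobjects rather than only coherent ones.

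Idempotency then follows formally: $\Ocal\mathsf{pt}(\mc E)$ always has enough points, since its points include evaluations at elements of $\mathsf{pt}(\mc E)$ and these jointly detect isomorphisms pointwise. Hence $\epsilon$ is invertible at every $\Ocal\mathsf{pt}(\mc E)$, which is one of the standard equivalent conditions for a 2-adjunction to be idempotent.

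I expect the definability step — showing $\epsilon_{\mc E}$ is an embedding when $\mc E$ has enough points — to be the main obstacle.
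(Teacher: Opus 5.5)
The paper gives no proof of this theorem. It imports it from \cite{saadiaExtendingConceptualCompleteness2025, hamadGeneralisedUltracategoriesConceptual2025, vangoolToposesEnoughPoints2026}, where the step you leave open is the heart of the matter.

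The formal half of your proposal is sound:
\begin{itemize}
\item the constructions of $\mathsf{pt}$ and $\Ocal$;
\item the pointwise (co)limits in $\Ocal(\mathscr X)$;
\item the pairing argument for $\Ocal\dashv\mathsf{pt}$ and the description of the counit;
\item the easy implication;
\item idempotency, derived from the fact that evaluations give enough points of any topos $\Ocal(\mathscr X)$.
\end{itemize}
There are two slips. First, the hom-equivalence should read $\Topoi(\Ocal(\mathscr X),\mc E)\simeq\mathsf{vUlt}(\mathscr X,\mathsf{pt}(\mc E))$; your description via lex cocontinuous functors $\mc E\to\Ocal(\mathscr X)$ is of this category. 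Second, you never check that $\mathsf{pt}(\mc E)$ is bounded. This is needed for $\mathsf{pt}$ to land in $\mathsf{vUlt^{bnd}}$, and for $\Ocal\mathsf{pt}(\mc E)$ to be a topos in your idempotency step. It is not formal: it follows from the hard direction applied to the subtopos of $\mc E$ through which all points factor, which has the same virtual ultracategory of points.

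The genuine gap is the hard direction, which is the entire content of the theorem. You must show that the counit $\epsilon_{\mc E}^*(\epsilon_{\mc E})_*\Phi\to\Phi$ is invertible for every virtual ultrafunctor $\Phi$; your $\tilde\Phi$ is $(\epsilon_{\mc E})_*\Phi$. Mere essential surjectivity of $\epsilon^*_{\mc E}$, as in your paraphrase, would not suffice: the forgetful functor from $G$-sets to sets is conservative and essentially surjective but not an equivalence. Concretely, two things are needed:
\begin{itemize}
\item every $s\in\Phi(x)$ must lie on a section $\epsilon^*_{\mc E}(\yo_c)\to\Phi$ through some $a\in x^*(c)$, that is, a choice of elements at all points $y$ with $b\in y^*(c)$, natural in maps of points and compatible with all ultra-arrows;
\item two such sections agreeing at $(x,a)$ must agree on a smaller neighbourhood.
\end{itemize}
Your sketch gives no mechanism for either.

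The model you invoke, \Cref{prop:compact-objects-in-coherent-topos}, solves the opposite problem: it tests a property of an object already in the topos. It also rests on two coherent-specific facts. One is the existence of ultraproduct points $\mu^*\ran_{i_I}x$, computed by ultraproducts on coherent objects; in a general topos $\int y_i^*(-)\,d\mu$ is merely lex, so no such point exists. The other is compactness of images of coherent objects, which is what produces the contradiction. Moreover, failure of $s$ to extend over a neighbourhood is not a pointwise property, so there are no ``points witnessing non-membership'' to take an ultraproduct of. The ultrastructure of $\Phi$ only transports $s$ along ultra-arrows out of $x$ to $\mu$-almost all members of a family, and nothing tells you how such partial transports glue into a section. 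Your remark that colax comparison maps allow ``arbitrary subobjects'' names the difficulty rather than resolving it.

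As written, you have the adjunction and the easy implication, but not that a topos with enough points is recovered from its virtual ultracategory of points.
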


We then denote by $\mathsf{vUlt^{bnd}_{sob}}\subseteq \mathsf{vUlt^{bnd}}$ and $\mathsf{Ult^{bnd}_{sob}}\subseteq \mathsf{Ult^{bnd}}$ the full sub-2-categories spanned by \emph{sober} virtual ultracategories and ultracategories, meaning those lying in the essential image of the 2-functor $\ms{pt} \colon \Topoi \to \ms{vUlt}$. Following our strategy described above, note that $\Set$ acts as a dualising object for a 2-adjunction between pretopoi and ultracategories:
\[\begin{tikzcd}
	{\ms{Pretopoi}\op } && {\mathsf{Ult^{bnd}_{sob}}}
	\arrow[""{name=0, anchor=center, inner sep=0}, "{\ms{Mod}}"', curve={height=12pt}, from=1-1, to=1-3]
	\arrow[""{name=1, anchor=center, inner sep=0}, "{\ms{Ult}(-,\Set)}"', curve={height=12pt}, from=1-3, to=1-1]
	\arrow["\dashv"{anchor=center, rotate=-90}, draw=none, from=1, to=0]
\end{tikzcd}\]
This way, \Cref{thm:makkai} is equivalent to the above adjunction being a reflection, i.e.\ $\ms{Mod} \colon \ms{Pretopoi}\op \to \ms{Ult^{bnd}_{sob}}$ being 2-fully-faithful. Note here that, for a small pretopos $\mc A$, we can identify the ultracategory $\ms{Mod}(\mc A)$ with $\ms{pt}(\sh(\mc A, J_{\mr{coh}}))$.

\subsection{Revisiting semantic prescriptions}\label{semanticprescription}
In \cite{dilibertiLogicConcepts2category2025} the authors sometimes refer to their notion of logic as \textit{semantic prescriptions}. Through virtual ultracategories, we can make this intuition even more transparent.

As for topoi, fixed a class $\mathcal M$ of morphisms in $\ms{vUlt^{bnd}}$, we can consider the $2$-category $\WRInj(\mc M)$ of those virtual ultracategories which are right Kan injective with respect to $\mc M$. The theory of right Kan injectivity allows us to specify virtual ultracategories with properties of interest for the \textit{working model theorist}. 

\begin{exa}[Complete virtual ultracategories]\label{ex:compl-vucats}
    For concreteness, one can visualise the special case in which $\mc M$ is the class $\mc M^{\text{all}}$ of morphisms of the type $\mc D \to 1$, where $\mc D$ is a category --- seen as an \emph{Alexandroff} virtual ultracategory as in \cite[Ex.\ 4.2.(vi)]{saadiaExtendingConceptualCompleteness2025} --- and $1$ is the terminal virtual ultracategory. In this case, $\WRInj(\mc M^{\text{all}})$ is precisely the $2$-category of those virtual ultracategories whose categories of points (in the sense of \cite[Def.\ 4.6]{saadiaExtendingConceptualCompleteness2025}) are complete. 
\end{exa}

\begin{exa}[Virtual ultracategories with products] \label{exa:regular}
    Proceeding as in \Cref{ex:compl-vucats}, we can also prescribe the existence of products (resp.\ finite products) by restricting to the class $\mc M^{\text{disc}}$ (resp.\ $\mc M^{\text{fdisc}}$) of morphisms of the type $S \to 1$, where $S$ is a set (resp.\ finite set). For instance,  virtual ultracategories of models of regular theories lie in $\WRInj(\mc M^{\text{disc}})$, see \cite[Lem.\ D2.4.3]{johnstoneSketchesElephantTopos2002} (cf.\ also \cite[Lem.\ 4.3.5]{dilibertiLogicConcepts2category2025}). 
\end{exa}
Through the theory of \textit{logics} in the sense of \cite{dilibertiLogicConcepts2category2025}, we can now transfer these properties into semantic prescriptions at the level of syntax, represented by topoi. Indeed, denoting by $\mc O (\mc M)$ the class of morphisms in $\Topoi$ defined by $\set{ \mc O (f) \, |\, f \in \mc M}$, the adjunction above formally restricts to an equivalence at the level of injectives, as shown below.

\begin{prop}\label{prop:formal-equivalence-injectivity-vult}
  For any class $\mc M$ of maps in $\ms{vUlt^{bnd}_{sob}}$, the 2-adjunction of \Cref{thm:main-vucats} restricts to an equivalence 
\[\begin{tikzcd}
	{\WRInj(\mc O(\mc M))_{\ms{wep}}} && {\WRInj(\mc M)_{\ms{sob}}^{\ms{bnd}}}
	\arrow[""{name=0, anchor=center, inner sep=0}, "{{\mathsf{pt}}}"', curve={height=12pt}, from=1-1, to=1-3]
	\arrow[""{name=1, anchor=center, inner sep=0}, "{{\Ocal}}"', curve={height=12pt}, from=1-3, to=1-1]
	\arrow["\simeq"{description}, draw=none, from=1, to=0]
\end{tikzcd}\]
where $\WRInj(\mc O(\mc M))_{\ms{wep}}$ denotes the full sub-2-category of $\WRInj(\mc O(\mc M))$ spanned by topoi with enough points.
\end{prop}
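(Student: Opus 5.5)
The plan is to restrict the equivalence $\Topoi_{\ms{wep}} \simeq \ms{vUlt^{bnd}_{sob}}$ of \Cref{thm:main-vucats} to injectives and to check that objects and morphisms match on both sides. The guiding observation is that right Kan injectivity is a purely 2-categorical notion, phrased through hom-categories and whiskering. The adjunction $\Ocal \dashv \ms{pt}$ gives, for a bounded virtual ultracategory $\mathscr D$ and a topos $\mc X$, a pseudonatural equivalence
\[\Topoi(\Ocal\mathscr D,\mc X)\simeq \ms{vUlt}(\mathscr D,\ms{pt}\,\mc X).\]
Here $\Ocal$ is contravariant on hom-directions as the left adjoint into $\Topoi$. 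I read the statement with $\Ocal(f)\colon \Ocal\mathscr D\to\Ocal\mathscr D'$ for $f\colon\mathscr D\to\mathscr D'$, so that a cone of the form $\Ocal\mathscr D \to \mc X$ along $\Ocal f$ corresponds to a cone $\mathscr D\to\ms{pt}\,\mc X$ along $f$.

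\textbf{Step 1 (objects).} Fix $f\colon \mathscr D\to\mathscr D'$ in $\mc M$ and a topos $\mc X$ with enough points. The equivalence above is pseudonatural in the first variable. Precomposition with $\Ocal f$ on the topos side therefore corresponds to precomposition with $f$ on the virtual-ultracategory side, and this correspondence is compatible with 2-cells.

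A right Kan extension along $\Ocal f$ is determined by a universal property in these hom-categories: it is a right adjoint-like terminal object in the comma of cones. Such a universal property transports across an equivalence of hom-categories commuting with precomposition. Hence $\mc X$ is right Kan injective with respect to $\Ocal f$ if and only if $\ms{pt}\,\mc X$ is right Kan injective with respect to $f$.

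For this argument the sources and targets of the maps in $\mc M$ must be bounded, so that $\Ocal$ lands in $\Topoi$. This holds because $\mc M\subseteq\ms{vUlt^{bnd}_{sob}}$.

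Conversely, every object of $\WRInj(\mc M)^{\ms{bnd}}_{\ms{sob}}$ is of the form $\ms{pt}\,\mc X$ with $\mc X$ having enough points, namely $\mc X = \Ocal\mathscr X$. So $\ms{pt}$ and $\Ocal$ restrict to mutually inverse, up to equivalence, correspondences of objects.

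\textbf{Step 2 (morphisms).} A geometric morphism $g\colon\mc X\to\mc Y$ preserves $\ran_{\Ocal f}x$ if and only if $\ms{pt}(g)$ preserves the corresponding $\ran_f$. This follows from the same transport, using pseudonaturality of the hom-equivalence in the second variable: postcomposition with $g$ corresponds to postcomposition with $\ms{pt}(g)$.

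Since $\ms{pt}$ is 2-fully-faithful on $\Topoi_{\ms{wep}}$, and both $\WRInj$ 2-categories are locally full, the restricted 2-functor is 2-fully-faithful and essentially surjective. It is therefore an equivalence.

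\textbf{Main obstacle.} The delicate point is making the transport of Kan extensions rigorous in the presence of the contravariance built into $\Ocal$. In particular, I must check that the comparison 2-cells defining $\ran_{\Ocal f}$ in $\Topoi$ correspond exactly, and not merely up to a change of direction, to those defining $\ran_f$ in $\ms{vUlt}$.

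My first attempt will be to write right Kan injectivity as the existence of a right adjoint to the precomposition functor $(-)\circ\Ocal f$ at the relevant object, in the weak-injectivity sense. I would then invoke that adjoints transfer across pseudonatural equivalences of hom-categories.
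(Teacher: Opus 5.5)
Your proposal is correct and follows essentially the paper's route. You restrict $\ms{pt}$ using \Cref{thm:main-vucats}, transport existence and preservation of right Kan extensions across the pseudonatural equivalence $\Topoi(\Ocal\mathscr D,\mc X)\simeq\ms{vUlt}(\mathscr D,\ms{pt}\,\mc X)$ to get 2-fully-faithfulness, and use sobriety (with idempotency, to pick the preimage $\Ocal\mathscr X$ with enough points) for essential surjectivity; the contravariance you flag as the main obstacle does not arise, since $\Ocal$ is covariant on 1- and 2-cells and the transport is exact.
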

\begin{proof}
By \Cref{thm:main-vucats}, it is straightforward to see that $\mathsf{pt} \colon \Topoi_{\ms{wep}} \hook \ms{vUlt}$ restricts to a 2-functor $\WRInj(\mc O(\mc M))_{\ms{wep}} \to \WRInj(\mc M)^{\ms{bnd}}_{\ms{sob}}$. The fact that this restriction remains 2-fully-faithful follows also by \Cref{thm:main-vucats}: for a geometric morphism $g \colon \mc E\to \mc F$ in $\Topoi_{\ms{wep}}$, the virtual ultrafunctor $\ms{pt}(g) \colon \ms{pt}(\mc E) \to \ms{pt}(\mc G)$ preserves right Kan extensions along any $f \colon \mathscr{X}\to \mathscr{Y}$ in $\mc M$ if \emph{and only if} $g$ preserves right Kan extensions along $f$.
\[\begin{tikzcd}
	{\mc O(\mathscr{X})} & {\mc E} \\
	{\mc O(\mathscr{Y})} & {\mc F}
	\arrow[from=1-1, to=1-2]
	\arrow["{{\mc O (f)}}"', from=1-1, to=2-1]
	\arrow["g", from=1-2, to=2-2]
	\arrow[""{name=0, anchor=center, inner sep=0}, dashed, from=2-1, to=1-2]
	\arrow[""{name=1, anchor=center, inner sep=0}, dashed, from=2-1, to=2-2]
	\arrow["\cong"{description}, draw=none, from=0, to=1]
\end{tikzcd} \qquad \begin{tikzcd}
	{\mathscr{X}} & {\ms{pt}(\mc E)} \\
	{\mathscr{Y}} & {\ms {pt}(\mc F)}
	\arrow[from=1-1, to=1-2]
	\arrow["f"', from=1-1, to=2-1]
	\arrow["{\ms {pt}(g)}", from=1-2, to=2-2]
	\arrow[""{name=0, anchor=center, inner sep=0}, dashed, from=2-1, to=1-2]
	\arrow[""{name=1, anchor=center, inner sep=0}, dashed, from=2-1, to=2-2]
	\arrow["\cong"{description}, draw=none, from=0, to=1]
\end{tikzcd}\]
By definition of sober virtual ultracategories, $\ms{pt}$ is also essentially surjective, and thus an equivalence.
\end{proof}

\begin{rem}[Semantic prescriptions for regular logic?]
As we have seen in \Cref{exa:regular}, virtual ultracategories of models of regular theories lie in $\WRInj(\mc M^{\text{disc}})$. In fact, since regular functors also induce functors preserving products at the level of models, we have a 2-functor
\[ \ms{Mod} \colon \ms{Exact}\op \to \WRInj(\mc M^{\text{disc}})\]
on the 2-category of (small) exact categories. As products are a crucial feature of regular logic, one could imagine that the 2-category $\WRInj(\mc M^{\text{disc}})^{\ms{bnd}}_{\ms{sob}}$ brings us closer to a conceptual-completeness-like result for regular logic in the spirit of Makkai's one --- that is, based on endowing categories of models with additional structure. However, we will see in \Cref{exa:regular-logic-à-la-makkai} that the correct class of virtual ultrafunctors that delivers such a set-based conceptual completeness result for regular logic is induced by the class of $\alpha$-maps already considered in \Cref{ssec:cc-regular-logic}.
\end{rem}

\subsection{Back to Makkai's conceptual completeness} \label{equivalenttomakkai}
We are finally in the position to recover Makkai's conceptual completeness theorem for coherent logic by proving its equivalence to conceptual completeness of the logic $\Hcal_{\beta}$ in the sense of \Cref{def:cc-logic}. 

\looseness=-1
As in \Cref{ssec:cc-coherent-logic}, for each set $I$, consider the continuous map $i_I \colon I \hook \beta I$ including the discrete space on $I$ into its Stone-Čech compactification. We denote by $\mc M_\beta$ be the class of virtual ultrafunctors defined by the family of these continuous maps.

\begin{cor}\label{cor:beta-injective-vults}
The 2-adjunction of \Cref{thm:main-vucats} restricts to an equivalence:
\[\begin{tikzcd}
	{\WRInj(\Hcal_{\beta})_{\ms{wep}}} && {\WRInj(\mathcal{M}_\beta)^{\ms{bnd}}_{\ms{sob}}}
	\arrow[""{name=0, anchor=center, inner sep=0}, "{{\mathsf{pt}}}"', curve={height=12pt}, from=1-1, to=1-3]
	\arrow[""{name=1, anchor=center, inner sep=0}, "{{\mc O}}"', curve={height=12pt}, from=1-3, to=1-1]
	\arrow["\simeq"{description}, draw=none, from=1, to=0]
\end{tikzcd}\]
\end{cor}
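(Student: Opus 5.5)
This is a direct specialisation of \Cref{prop:formal-equivalence-injectivity-vult} to $\mc M = \mc M_\beta$. All the work lies in identifying $\mc O(\mc M_\beta)$ with $\Hcal_\beta$, i.e.\ in checking that applying $\mc O$ to the virtual ultrafunctor induced by $i_I \colon I \hook \beta I$ gives, up to equivalence, the geometric morphism $i_I \colon \Set^I \hook \Sh(\beta I)$ defining a $\beta$-map. The plan has three steps.

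First, I will check that $\mc M_\beta$ lies in $\ms{vUlt^{bnd}_{sob}}$, as \Cref{prop:formal-equivalence-injectivity-vult} requires. The discrete space $I$ and the compact Hausdorff space $\beta I$ are topological spaces, so the fully faithful embedding $\ms{TopSp} \hook \ms{vUlt^{bnd}}$ makes them bounded virtual ultracategories. I expect them to be sober because they are (up to equivalence) $\ms{pt}(\Set^I)$ and $\ms{pt}(\Sh(\beta I))$. The topos $\Set^I$ has enough points, with points the elements of $I$. The topos $\Sh(\beta I)$ is spatial over a sober space, and its points form the space $\beta I$. Under the embedding of spaces, the ultra-arrows are given by ultrafilter convergence. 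Since a sober space and its locale of opens determine each other, $\ms{pt}$ applied to sheaves on a sober space returns that space.

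Second, I will compute $\mc O$ on these objects. For a topological space $X$, viewed as a virtual ultracategory, I will show that $\mc O(X) = \ms{vUlt}(X,\Set) \simeq \Sh(X)$. A virtual ultrafunctor $X \to \Set$ amounts to a set-valued assignment on points together with ultrafilter-convergence data. This is the classical description of étale spaces (local homeomorphisms) via ultrafilter convergence, and it is also the spatial instance of \Cref{thm:main-vucats}: the counit $\mc O(\ms{pt}(\Sh X)) \simeq \Sh(X)$ is an equivalence because $\Sh(X)$ has enough points. Naturality of these equivalences in continuous maps then identifies $\mc O(i_I)$ with the geometric morphism $\Sh(I) = \Set^I \to \Sh(\beta I)$ induced by $i_I$. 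Hence $\mc O(\mc M_\beta) = \Hcal_\beta$ up to equivalence of arrows.

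Third, right Kan injectivity in $\Topoi$ and preservation of the defining right Kan extensions are invariant under replacing the arrows of a logic by equivalent ones. Therefore $\WRInj(\mc O(\mc M_\beta)) = \WRInj(\Hcal_\beta)$, and restricting to topoi with enough points in \Cref{prop:formal-equivalence-injectivity-vult} gives the stated equivalence.

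The main obstacle is the naturality in the second step: checking that the equivalence $\mc O(X) \simeq \Sh(X)$ intertwines $\mc O(i_I)$ with the inverse image $i_I^*$, so that the whole geometric morphism, not merely its domain and codomain, is recovered. I would deduce this from the pseudonaturality of the counit of \Cref{thm:main-vucats}, rather than checking ultra-arrow data by hand.
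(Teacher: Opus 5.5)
Your proposal is correct and is essentially the paper's proof: specialise \Cref{prop:formal-equivalence-injectivity-vult} to $\mc M_\beta$ after identifying $\mc O(i_I \colon I \hook \beta I)$ with the $\beta$-map $\Set^I \hook \Sh(\beta I)$. The only differences are two. First, the paper simply cites \cite[Rem.\ 4.11]{vangoolToposesEnoughPoints2026} for $\mc O$ acting as $\Sh(-)$ on spaces, whereas you derive this from the sobriety of $I$ and $\beta I$ together with the pseudonatural counit of \Cref{thm:main-vucats}. Second, you explicitly check that $\mc M_\beta$ lies in $\ms{vUlt^{bnd}_{sob}}$, which the paper leaves implicit.
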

\begin{proof}
Note that $\mc O(\mc M_\beta)$ is precisely the logic $\Hcal_{\beta}$: indeed, on topological spaces and continuous maps, $\mc O \colon \mathsf{vUlt^{bnd}}\to\Topoi$ acts as the functor taking categories of sheaves (see \cite[Rem.\ 4.11]{vangoolToposesEnoughPoints2026}), so that $\mc O(i_I \colon I \hook \beta I )$ is the geometric morphism $i_I \colon \Set^I\hook \sh(\beta I)$. The claim then follows by \Cref{prop:formal-equivalence-injectivity-vult}.
\end{proof}

\looseness=-1
Note then that ultracategories, seen as virtual ultracategories as described in \Cref{rem:ultcats-as-vultcats}, are $\mathcal{M}_\beta$-injective. Crucially, ultrafunctors can be equivalently singled out among virtual ultrafunctors as those preserving right Kan extension along $\mathcal{M}_\beta$-morphisms.

\begin{lem}\label{lem:ultracategories-are-injectives}
The 2-category $\mathsf{Ult}$ embeds 2-fully-faithfully in $\WRInj(\mc M_\beta)$.
\begin{proof}
    Let $\mathscr{X}$ be an ultracategory, defined by a $\bbbeta$-algebra functor $A \colon \bbbeta \mathscr{X} \to \mathscr{X}$, and recall by \Cref{rem:ultcats-as-vultcats} that we can see it as a virtual ultracategory. For a set $I$ and a virtual ultrafunctor $z \colon I \to \mathscr{X}$, i.e.\ simply a functor $z \colon I \to X_0$ into the underlying category of $\mathscr{X}$, the value of the right Kan extension $\ran_{i_I} z$ at an ultrafilter $\mu \in \beta I$ is given by the ultraproduct of the ultrafamily $(z_i)_{i:\mu}$ in $\mathscr{X}$ (cf.\ \cite[Cons.\ 2.0.1]{dilibertiGeometryCoherentTopoi2022}), meaning formally:
    \[ \ran_{i_I} z (\mu) \coloneqq A ( I, z\colon I \to X_0, \mu\in\beta I). \]
    It is then straightforward to see that a virtual ultrafunctor between ultracategories preserves these right Kan extensions if and only if it is an ultrafunctor.
\end{proof}
\end{lem}

Recall now that, by the results of \Cref{ssec:cc-coherent-logic}, classifying topoi for $\Hcal_\beta$ are computed as the usual topoi of sheaves over pretopoi: in particular, they have enough points by Deligne's theorem \cite[Thm.\ D3.3.13]{johnstoneSketchesElephantTopos2002}. Therefore, the 2-adjunction $\Syn^{\Hcal_{\beta}}\dashv \Cl^{\Hcal_{\beta}}$ corestricts to $\Hcal_{\beta}$-injectives with enough points, which means that we can draw the desired diagram: 
\[\begin{tikzcd}
	{\mathsf{Pretopoi}\op} & {\WRInj(\Hcal_{\beta})_{\ms{wep}}} & {\WRInj(\mc M_\beta)\mathsf{^{bnd}_{sob}}}
	\arrow[""{name=0, anchor=center, inner sep=0}, "{{\Cl^{\Hcal_{\beta}}}}"', curve={height=18pt}, from=1-1, to=1-2]
	\arrow[""{name=1, anchor=center, inner sep=0}, "{{{{\Syn^{\Hcal_{\beta}}}}}}"', curve={height=18pt}, from=1-2, to=1-1]
	\arrow[""{name=2, anchor=center, inner sep=0}, "{{{{\mathsf{pt}}}}}"', curve={height=18pt}, from=1-2, to=1-3]
	\arrow[""{name=3, anchor=center, inner sep=0}, "{{{{\mc O}}}}"', curve={height=18pt}, from=1-3, to=1-2]
	\arrow["\dashv"{anchor=center, rotate=-90}, draw=none, from=1, to=0]
	\arrow["\simeq"{description}, draw=none, from=3, to=2]
\end{tikzcd}\]
Note that the 2-functor $\ms{Mod}\colon \ms{Pretopoi}\op \to \ms{Ult^{bnd}_{sob}}$ considered above is then a corestriction of the composite $\mathsf{pt} \circ \Cl^{\Hcal_\beta}$ to ultracategories and ultrafunctors, since we showed in \Cref{ssec:cc-coherent-logic} that $\Cl^{\Hcal_{\beta}}$ computes the usual topoi of sheaves over pretopoi. Therefore, since the inclusion $\ms{Ult^{bnd}_{sob}}\hook \WRInj(\mc M_\beta)^{\ms{bnd}}_{\ms{sob}}$ is full by \Cref{lem:ultracategories-are-injectives}, the following are equivalent:
\begin{itemize}
    \item[i.]$\ms{Mod}\colon \ms{Pretopoi}\op \to \ms{Ult^{bnd}_{sob}}$ is 2-fully-faithful;
    \item[ii.]$\ms{Mod}\colon \ms{Pretopoi}\op \to \WRInj(\mc M_{\beta})_{\ms{sob}}^{\ms{bnd}}$ is 2-fully-faithful;
    \item[iii.]$\Cl^{\Hcal_\beta} \colon \ms{Pretopoi}\op \to \WRInj(\Hcal_\beta)_{\ms{wep}}$ is 2-fully-faithful;
    \item[iv.]$\Cl^{\Hcal_\beta} \colon \ms{Pretopoi}\op \to \WRInj(\Hcal_\beta)$ is 2-fully-faithful.
\end{itemize}
Looking at (i) and (iv), we finally conclude that conceptual completeness of coherent logic in the sense of \cite{makkaiStoneDualityFirst1987} is equivalent to conceptual completeness of the logic $\Hcal_{\beta}$ in the sense of \Cref{def:cc-logic} --- which we proved in \Cref{ssec:cc-coherent-logic}.

\begin{cor}\label{cor:recovering-makkai}
    The following are equivalent:
    \begin{enumerate}
        \item the 2-adjunction $\ms{Mod} \colon \begin{tikzcd}[cramped]
        {\ms{Pretopoi}\op} & {\ms{Ult^{bnd}_{sob}}}
        \arrow[""{name=0, anchor=center, inner sep=0}, shift right=1.3, from=1-1, to=1-2]
        \arrow[""{name=1, anchor=center, inner sep=0}, shift right=1.7, from=1-2, to=1-1]
        \arrow["\dashv"{anchor=center, rotate=-90}, draw=none, from=1, to=0]
    \end{tikzcd} \cocolon {\ms{Ult}(-,\Set)}$ is a reflection, i.e.\ \Cref{thm:makkai} holds;
    \item the 2-adjunction $\Cl^{\Hcal_\beta} \colon \begin{tikzcd}[cramped]
        {\ms{Pretopoi}\op} & {\WRInj(\Hcal_\beta)}
        \arrow[""{name=0, anchor=center, inner sep=0}, shift right=1.3, from=1-1, to=1-2]
        \arrow[""{name=1, anchor=center, inner sep=0}, shift right=1.7, from=1-2, to=1-1]
        \arrow["\dashv"{anchor=center, rotate=-90}, draw=none, from=1, to=0]
    \end{tikzcd} \cocolon {\Syn^{\Hcal}}$ is a reflection, i.e.\ $\Hcal_\beta$ is conceptually complete.
    \end{enumerate}
\end{cor}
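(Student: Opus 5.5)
The plan is to make rigorous the chain of equivalences (i)–(iv) stated just before the corollary. Then (1)$\Leftrightarrow$(2) follows from the standard fact that an adjunction is a reflection if and only if its right adjoint is 2-fully-faithful. By \Cref{def:cc-logic} and \Cref{cor:cc-as-every-algebra-is-syntactic-category}, statement (2) is exactly (iv). Here we use the reduction lemma (\Cref{prop:generic-cc}) and \Cref{flatcc}, which identify $\Alg(\T^{\Hcal_\beta})$ with $\ms{Pretopoi}$ and $\Cl^{\Hcal_\beta}(\mc A)$ with $\Sh(\mc A, J_{\mr{coh}})$. Statement (1) is exactly (i), once $\ms{Mod}(\mc A)$ is identified with $\ms{pt}(\Sh(\mc A,J_{\mr{coh}}))$, as recalled after \Cref{thm:main-vucats}.

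The steps are then as follows.

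First, (iii)$\Leftrightarrow$(iv). $\WRInj(\Hcal_\beta)_{\ms{wep}}$ is a full sub-2-category of $\WRInj(\Hcal_\beta)$. By Deligne's theorem every $\Cl^{\Hcal_\beta}(\mc A)$ is coherent and hence has enough points. So the hom-categories between objects in the image of $\Cl^{\Hcal_\beta}$ agree in both 2-categories, and 2-fully-faithfulness is the same condition.

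Second, (ii)$\Leftrightarrow$(iii). Compose with the equivalence $\ms{pt}$ of \Cref{cor:beta-injective-vults}. A 2-functor is 2-fully-faithful if and only if its composite with an equivalence is, and $\ms{Mod}\simeq \ms{pt}\circ\Cl^{\Hcal_\beta}$ as 2-functors.

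Third, (i)$\Leftrightarrow$(ii). The functor $\ms{Mod}$ lands in ultracategories, and by \Cref{lem:ultracategories-are-injectives} the inclusion $\ms{Ult^{bnd}_{sob}}\hookrightarrow\WRInj(\mc M_\beta)^{\ms{bnd}}_{\ms{sob}}$ is 2-fully-faithful. So once more fully-faithfulness of $\ms{Mod}$ is unaffected by enlarging the codomain. We also need that $\ms{Mod}(\mc A)$ is bounded and sober; this holds since it is $\ms{pt}$ of a topos with enough points.

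Finally, the adjunction in (1) must be compatible with these identifications. Its left adjoint $\ms{Ult}(-,\Set)$ should agree with $\Syn^{\Hcal_\beta}\circ\mc O$ restricted to sober ultracategories. This follows by uniqueness of adjoints once the right adjoints are known to agree. Alternatively, one can check directly that, for a sober ultracategory $\mathscr X$, ultrafunctors $\mathscr X\to\Set$ correspond to virtual ultrafunctors preserving $\mc M_\beta$-extensions, which in turn correspond to $\Hcal_\beta$-morphisms $\mc O(\mathscr X)\to\Set[\Obb]$.

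I expect the main obstacle to be the bookkeeping in the second step. The identification $\ms{pt}\circ\Cl^{\Hcal_\beta}\simeq\ms{Mod}$ has to be pseudonatural and compatible on 2-cells: geometric transformations must match transformations of ultrafunctors. This should follow from \Cref{thm:main-vucats} together with the fact that $\Cl^{\Hcal_\beta}$ acts on morphisms via Diaconescu's equivalence (\Cref{thm:diaconescu}), so I would verify it at that level.
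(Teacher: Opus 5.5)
Your proposal is correct and follows the paper's argument essentially verbatim. The paper also proves the corollary via the chain (i)$\Leftrightarrow$(ii)$\Leftrightarrow$(iii)$\Leftrightarrow$(iv), using the same three ingredients you use:
\begin{enumerate}
\item fullness of $\ms{Ult^{bnd}_{sob}}\hookrightarrow\WRInj(\mc M_\beta)^{\ms{bnd}}_{\ms{sob}}$ (\Cref{lem:ultracategories-are-injectives});
\item the equivalence $\ms{pt}$ of \Cref{cor:beta-injective-vults}, with $\ms{Mod}$ a corestriction of $\ms{pt}\circ\Cl^{\Hcal_\beta}$;
\item Deligne's theorem, to pass between $\WRInj(\Hcal_\beta)_{\ms{wep}}$ and $\WRInj(\Hcal_\beta)$.
\end{enumerate}
Your extra checks (pseudonaturality of $\ms{Mod}\simeq\ms{pt}\circ\Cl^{\Hcal_\beta}$, and matching of left adjoints) are bookkeeping the paper leaves implicit. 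The left-adjoint check is not even needed, since being a reflection depends only on the right adjoint being 2-fully-faithful.
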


\begin{rem}
Compare this discussion with \cite[Rem.\ 6.3.5, 6.3.6]{dilibertiLogicConcepts2category2025}: we have now justified our notion of conceptual completeness by showing its equivalence, in the coherent case, to Makkai's notion, and we managed to deliver a proof of Makkai's result which is contained in our framework.
\end{rem}

\subsection{A modular account of conceptual completeness \emph{à la} Makkai}\label{ssec:cc-à-la-makkai}{\looseness=-1 The discussion in the previous subsection for the logic $\Hcal_\beta$ allows us to define a notion of conceptual completeness, for a (bounded) logic $\Hcal$, grounded in its $\Set$-based semantics. Indeed, suppose that $\Hcal = \mc O(\mc M)$ for some class $\mc M$ of morphisms in $\ms{vUlt^{bnd}_{sob}}$, so that we can think of $\WRInj(\mc M)$ as the `correct' semantic prescriptions to study conceptual completeness of $\Hcal$.
The composite 2-functor 
\[\begin{tikzcd}
	{\alg(\T^{\Hcal})\op} & {\WRInj(\Hcal)} & {\WRInj(\mc M)}
	\arrow["{\Cl^{\Hcal}}", from=1-1, to=1-2]
	\arrow["{\mathsf{pt}}", from=1-2, to=1-3]
\end{tikzcd}\]
can always be identified with an appropriate lift to virtual ultracategories of the representable 2-functor $\alg(\T^{\Hcal})\op \to \CAT$ defined by homming into $\Set$, since by Diaconescu's equivalence (\Cref{thm:diaconescu}) we have $\Topoi(\Set, \Cl^{\Hcal}(\mc A)) \simeq \Alg(\T^{\Hcal})(\mc A, \Set)$ for any small $\T^{\Hcal}$-algebra $\mc A$. Thus, as in the coherent case, we denote this 2-functor as
\[ \mathsf{Mod} \colon \alg(\T^{\Hcal})\op \to \WRInj(\mc M).\]

\begin{defn}[Conceptual completeness \emph{à la} Makkai]\label{def:cc-à-la-makkai}
	A bounded logic $\Hcal$ such that $\Hcal = \mc O(\mc M)$ for some class $\mc M$ of morphisms in $\ms{vUlt^{bnd}}$ is \emph{conceptually complete {à la Makkai}} if the 2-functor $\mathsf{Mod} \colon \alg(\T^{\Hcal})\op \to \WRInj(\mc M)$ is fully faithful.
\end{defn}

\looseness=-1
Suppose now that $\Hcal$-classifying topoi have enough points. Then, as in the coherent case, we get the following picture, which we can see as a \emph{syntax-semantics duality} or a \emph{reconstruction theorem} for the logic $\Hcal$, arising via $\Set$ as a dualising object:
\[\begin{tikzcd}
	{\alg(\T^{\Hcal})\op} & {\WRInj(\mc O(\mc M))_{\ms{wep}}} & {\WRInj(\mc M)^{\ms{bnd}}_{\ms{sob}}}
	\arrow[""{name=0, anchor=center, inner sep=0}, "{\Cl^{\Hcal}}"{description}, curve={height=18pt}, from=1-1, to=1-2]
	\arrow["{\alg(\T^{\Hcal})(-,\Set)}"', curve={height=36pt}, from=1-1, to=1-3]
	\arrow[""{name=1, anchor=center, inner sep=0}, "{\Syn^{\Hcal}}"{description}, curve={height=18pt}, from=1-2, to=1-1]
	\arrow[""{name=2, anchor=center, inner sep=0}, "{{{{{\mathsf{pt}}}}}}"{description}, curve={height=18pt}, from=1-2, to=1-3]
	\arrow["{\WRInj(\mc M)(-,\Set)}"', curve={height=36pt}, from=1-3, to=1-1]
	\arrow[""{name=3, anchor=center, inner sep=0}, "{{{{{\mc O}}}}}"{description}, curve={height=18pt}, from=1-3, to=1-2]
	\arrow["\dashv"{anchor=center, rotate=-90}, draw=none, from=1, to=0]
	\arrow["\simeq"{description}, draw=none, from=3, to=2]
\end{tikzcd}\]

\begin{thm}[Recovering a set-based duality]\label{bigthoeremwithtrivialproof}
	Let $\Hcal$ be a bounded logic such that $\Hcal = \mc O(\mc M)$ for some class $\mc M$ of morphisms in $\mathsf{vUlt^{bnd}_{sob}}$, and suppose that $\Hcal$-classifying topoi have enough points.
Then, the following are equivalent:
\begin{enumerate}
	\item $\Hcal$ is conceptually complete.
	\item $\Hcal$ is conceptually complete \emph{à la} Makkai.
\end{enumerate}
\begin{proof}
    Immediate from the above picture, since $\ms{Mod} (-) \cong \Alg (\T^{\Hcal})(-,\Set)$.
\end{proof}
\end{thm}

\begin{exa}[Regular logic \emph{à la} Makkai]\label{exa:regular-logic-à-la-makkai}
	Exactly as for $\Hcal_\beta$, the logic $\Hcal_{\alpha}$ introduced in \Cref{ssec:cc-regular-logic} is conceptually complete \emph{à la} Makkai, which can be intended as a conceptual completeness result in Makkai's sense for \emph{regular} logic.
\end{exa}

\begin{rem}[A final resolution]\label{rem:resolution}
  There are essentially two ways to look at \Cref{bigthoeremwithtrivialproof}. A somewhat negative point of view is to say that our notion of conceptual completeness is weaker than Makkai's and that the two are equivalent only under the additional assumption of a \textit{completeness} theorem. This is an apparently natural and neutral reading of the theorem above; yet, we think that it would be a misunderstanding of our work. What we provided, following the ideas also presented in~\cite{dilibertiLogicConcepts2category2025}, is a finer analysis of conceptual completeness, dissecting it into: 
  \begin{itemize}
      \item[(a)] a \emph{semantic} component which hinges on the conceptual completeness theorem of geometric logic in the sense of \cite{saadiaExtendingConceptualCompleteness2025, hamadGeneralisedUltracategoriesConceptual2025, vangoolToposesEnoughPoints2026}, reifying semantic prescriptions into geometric formulas;
        \item[(b)] a \emph{syntactic} component in the spirit of a definability theorem, reducing geometric formulas into formulas in the fragment.
        \end{itemize}
 \end{rem}

\bibliography{bibliography}
\bibliographystyle{alpha}

\end{document}